\documentclass[a4paper]{article}
\usepackage[utf8]{inputenc}
\usepackage[backend=bibtex,style=numeric,maxbibnames=30]{biblatex}
\addbibresource{matching_distance_refs.bib}
\usepackage{hyperref}

\usepackage{soul}
\usepackage[margin=1in]{geometry}
\usepackage[utf8]{inputenc}
\usepackage{algorithm}
\usepackage{algpseudocode}
\usepackage{amsmath}
\usepackage{amssymb}
\usepackage{amsthm}
\usepackage{tikz-cd}
\usepackage[textsize=scriptsize]{todonotes}
\usepackage{xcolor}
\usepackage{tikz}
\usepackage{enumitem}
\usepackage{hyperref}
\usepackage{cleveref}
\usetikzlibrary{cd, calc, shapes, decorations.shapes, decorations.markings, decorations.pathreplacing, shapes.geometric, shapes.symbols,positioning,arrows}
\usepackage{pgfplots}
\usepackage{float}
\usepackage{url}
\usepackage[normalem]{ulem} 
\usepackage{appendix}
\usepackage{caption}
\usepackage{subcaption}
\usepackage{graphicx}
\usepackage{thm-restate}
\usepackage{comment}
\usepackage{enumitem}

\newtheorem{theorem}{Theorem}[section]

\newtheorem{proposition}[theorem]{Proposition}
\newtheorem{lemma}[theorem]{Lemma}

\newtheorem{corollary}[theorem]{Corollary}

\theoremstyle{definition}
\newtheorem{definition}[theorem]{Definition}
\newtheorem{remark}[theorem]{Remark}
\newtheorem*{remark*}{Remark}
\newtheorem{notation}[theorem]{Notation}

\newtheorem{example}[theorem]{Example}

\newtheorem*{property*}{Property}
 
\newcommand{\closure}[1]{\overline{#1}}
\newcommand{\lub}{\mathrm{l.u.b.}}
\newcommand{\glb}{\mathrm{g.l.b.}}

\newcommand{\NN}{\mathbb{N}}
\newcommand{\RR}{\mathbb{R}}

\newcommand{\F}{\mathbb{F}}

\newcommand{\KK}{\mathcal{K}}
\newcommand{\rank}{\mathrm{rank\,}}

\DeclareMathOperator*{\argmin}{\arg\!\min}
\DeclareMathOperator*{\argmax}{\arg\!\max}

\tikzset{twosimp/.style={fill opacity=0.6,fill=gray,draw opacity=0.9}}

\DeclareMathOperator{\h}{H}
\DeclareMathOperator{\push}{push}
\DeclareMathOperator{\dgm}{dgm}
\DeclareMathOperator{\cost}{cost}

\title{Computing the Matching Distance of $2$-Parameter Persistence Modules from Critical Values}
\author{
  Asilata Bapat\footnote{Australian National University, Canberra, Australia}\\
  \texttt{asilata.bapat@anu.edu.au}
  \and Robyn Brooks \footnote{Boston College, Massachusetts, United States}\\
  \texttt{robyn.brooks@utah.edu} \\
  \and Celia  Hacker\footnote{EPFL, Lausanne, Switzerland} \\
  \texttt{celia.hacker@epfl.ch}\\
  \and Claudia Landi\footnote{ Università di Modena e Reggio Emilia, DISMI, Italy} \\ \texttt{claudia.landi@unimore.it} \\
  \and Barbara I. Mahler \footnote{KTH Royal Institute of Technology Stockholm, Sweden}
  \\ \texttt{bmahler@kth.se}
  \and Elizabeth R. Stephenson \footnote{Institute of Science and Technology Austria, Klosterneuburg, Austria}
  \\ \texttt{elizasteprene@gmail.com}
}

\begin{document}
\maketitle
    
\begin{abstract}    
The exact computation of the matching distance for multi-parameter persistence modules is an active area of research in computational topology. Achieving an easily obtainable exact computation of this distance would permit multi-parameter persistent homology to be a viable option for data analysis. For this purpose, two approaches are currently available, limited to persistence with parameters from $\RR^2$: authors of \cite{Kerber-Lesnick-Oudot2018,Bjerkevik2021} work in the discrete setting and apply the point-line duality; authors of \cite{ethier2023geometry, frosini2023} work in the smooth setting while remaining in the primal plane. In this paper, we streamline the computation of the matching distance in the combinatorial setting while staying in the primal plane. In doing so, besides connecting results from the literature, we give explicit formulas for the switch points needed by all the available methods and we show that it is possible to avoid considering vertical and horizontal lines. For the latter, lines with slope 1 play an essential role. 
\end{abstract}

\section{Introduction}\label{sec:introduction}

Persistent Homology is known as one of the most-often used tools in Topological Data Analysis, allowing one to use topological invariants to understand the underlying shape of data. In particular, single parameter persistence yields a summary of data through a one-dimensional filtration, allowing an overview of the data at many different scales.  Single parameter persistent homology has been the subject of much study and has proven to be useful in many applications \cite{bendich2016persistent,bhattacharya2015persistent,green2019topology,lee2017quantifying,sinhuber2017persistent}.


 However, some data requires filtration along multiple parameters to fully capture its information: this is the role of multi-parameter persistent homology \cite{Carlsson2009,Carlsson}.  In some contexts, it can be helpful to use multiple parameters to capture the details of the data  \cite{Blumberg2020,  Keller2018, Miller2017Data, Riess2020, Vipond2021}.  Additionally, single parameter persistent homology is not robust to outliers in a point cloud; these outliers can lead to a misinterpretation of the persistent homology, with the unnecessary creation or destruction of homology classes. Multi-parameter persistence can be a natural fix to this problem by adding a dimension depending on the density of the samples (see, e.g., \cite{FroLan2013, blumberg2022stability}). Unfortunately, understanding, visualizing, and computing invariants in multi-parameter persistent homology remains a difficult task both mathematically and computationally. This difficulty holds as well when it comes to computing distances between such invariants. 



In the single parameter case, there are several ways to compare persistent homology modules --- such as the bottleneck distance and Wasserstein distances --- which exhibit some stability properties with respect to variations in the input \cite{Cohen-Steiner2007}. For more than one parameter, there are also several definitions of distances between persistence modules. Amongst them, the \emph{matching distance} \cite{Biasotti2008} attracts the attention of multi-parameter persistence practitioners.

To compute the matching distance between two $n$-parameter persistence modules, one uses the fact that restricting an $n$-parameter filtration to any affine line $L$ of positive slope through the parameter space yields a $1$-parameter filtration. There is therefore a corresponding restriction of the $n$-parameter persistence module $M$ to a $1$-parameter module $M^L$ along that line. This construction allows for the knowledge and computational methods from the $1$-dimensional case to be applied to the $n$-dimensional case. 

Indeed, following this idea, the matching distance is defined as a supremum of the one-dimensional bottleneck distance, over the collection of all lines of positive slope in the parameter space, i.e.,  
\[ 
\displaystyle d_{match}(M, N) := \displaystyle \sup_{L\colon u=s\vec m+b} \hat m^L \cdot d_B(\dgm M^{L}, \dgm N^{L}), 
\]
where $\hat m^L$ is a normalization term explained in \Cref{sec:background}. However,  exactly computing this distance is not an easy task given the nature of its definition. As a first step towards an exact computation, several approximations of this distance have been provided for $2$-dimensional persistence modules \cite{BIASOTTI20111735,Cerri2020,kerber2020efficient}.

While approximated computations can be very useful in practice, computing the exact value of the matching distance has its own merits. One of them is that it can distinguish modules for which the rank invariant is a complete invariant, such as rectangle decomposable modules, as the matching distance is a metric for the rank invariant \cite{BotnanLeboviciOudot2022}.

Methods for the exact computation of the matching distance are currently only available for $2$-dimensional modules. One approach, carried out in \cite{ethier2023geometry, frosini2023}, is dedicated to smooth input data and is based on the concept of an extended Pareto grid. Another approach, carried out in    \cite{Kerber-Lesnick-Oudot2018}, is dedicated to combinatorial input and is based on looking at lines as points in the dual plane.  The paper \cite{Bjerkevik2021} presents improvements to this approach in terms of time complexity, still exploiting the point-line duality in the plane. 


In this paper, we streamline the computation of the matching distance for combinatorial input data in the primal plane. Our primary goal for this is to understand the role of lines with special slopes like diagonal, vertical, and horizontal lines. As a secondary goal, we aim to prove that the matching distance in the combinatorial setting is realized as a maximum. In doing so, we also re-interpret results from different papers using the primal plane as a reference.   

The present work is based on a refinement of the framework developed in \cite{Bapat2022}, in which we compute the rank invariant of a multi-persistence module from a finite set of critical parameter values.  These critical parameter values capture all the changes in homology occurring throughout the multi-filtration.  They may also be used to partition the set of positive lines in the parameter space into equivalence classes, where each equivalence class maintains the same birth and death ordering within the restricted module for all possible homology classes.


Based on these results, one sees that the critical values must be relevant to the choice of lines for the computation of the matching distance. Leveraging the framework of \cite{Bapat2022} to compute the matching distance allows us to reduce the number of lines necessary to compute the matching distance to a finite set, thus reducing the computation to a maximum rather than a supremum without exploiting the point-line duality used in \cite{Kerber-Lesnick-Oudot2018}.  Through some examples, we show that considering only lines passing through pairs of critical values is not sufficient. This is because the definition of matching distance depends on the bottleneck distance, and lines in the same equivalence class might not be such that the bottleneck distance is always achieved by the same pairing of births/deaths, as will be further explained in \Cref{sec:mainresult}.  

To overcome this problem, we analyze where switches might happen in the matching that achieves the bottleneck distance, identifying a set of points in the parameter space that we later on refer to as the set of switch points. This set allows us to refine our equivalence relation on the set of positive lines (see \Cref{sec:switch}) by partitioning lines through the parameter space using the union of the critical parameter values with the switch points.  Using this union, it is possible to identify all the lines at which the matching distance can be realised. 

We explain in detail how to compute all relevant parameter values in the primal plane and show that the matching distance is attained either at: a line through a pair of points in this union, or a line of diagonal slope through exactly one of the points in the union (see \Cref{thm:main}). 
Where appropriate, we also detail the connections and differences between related papers.

In conclusion, the novelty of this paper is to provide:  a geometric understanding of different lines, including horizontal, vertical, and diagonal lines, and their contribution to the matching distance; explicit formulas for the switch points; a common  framework where existing approaches in the literature can be compared.    

The paper is structured as follows. In \Cref{sec:background}, we give necessary background from multi-parameter persistent homology, as well as some of the concepts coming from the framework developed in \cite{Bapat2022}. We also provide some examples of two-parameter persistence modules for which we explicitly show which lines are necessary to compute the matching distance. In \Cref{sec:mainresult}, we provide the statement and proof of our main theoretical result \Cref{thm:main} and include a discussion on the role of vertical and horizontal lines. In \Cref{sec:switch}, we provide guidelines for the computation of the switch points, which are necessary to the exact computation of the matching distance. Finally, in \Cref{sec:conclusions}, we discuss future directions of study.



\section{Background}\label{sec:background}
While we will subsequently specialize to the case of $n = 2$ parameters, we begin with some general definitions to set the scene.  
\subsection{Notation and definitions}\label{sec:notation}

\begin{definition}[Parameter Spaces]
Let $n\in \NN$. If $n=1$, endow $\RR$ with the usual order $\le$. If $n>1$, 
take the following partial order on $\RR^n$: for $u=(u_i),v=(v_i)\in\RR^n$,  set $u\preceq v$ if and only if $u_i\leq v_i$ for  $i=1,\ldots, n$. The poset $(\RR^n,\preceq)$ is called an {\em $nD$ parameter space}. 
\end{definition}

For $n>1$, the $nD$ parameter space can be sliced by lines with positive slope.

 A line $L\subset\RR^n$  is a $1D$ parameter space when considered parameterized by $s\in\RR$ as $L: u =\vec m s+b$ where $\vec m\in \RR^n$ and  $b\in \RR^n$. $L$ has positive slope if  each coordinate $m_i$ of $\vec m$ is positive: $m_i > 0$.  Throughout the paper, we consider the following {\em standard normalisation} for parameterizations of lines: 

\begin{align}
\lVert \vec m \rVert_\infty := \max \{ m_i\mid i=1,\ldots, n\} = 1,  & \quad \sum_{i=1}^n b_i = 0  \label{eq:normalization}
\end{align}

This normalization is the one used in papers such as \cite{Cerri-et-al2013,Landi2018}. Other choices are possible. Changing the normalization would impact all the intermediate computations that follow but not the overall results.

\begin{definition}[Persistence Modules] Let $\F$ be a fixed field. An {\em ($n$-parameter) persistence module} $M$ over the parameter space $\RR^n$ is an assignment of an $\F$-vector space $M_u$ to each $u\in\RR^n$, and linear maps, called {\em transition} or {\em internal maps}, $i_M(u,v)\colon M_u\to M_v$ to each pair of points $u\preceq v\in\RR^n$, satisfying the following properties:

\begin{itemize}
    \item $i_M(u,u)$ is the identity map for all $u \in\RR^n$.
    \item $i_M(v,w)\circ i_M(u,v)=i_M(u,w)$ for all $u\preceq v\preceq w \in\RR^n$.
\end{itemize}
\end{definition}
  
In this paper, persistence modules will always assumed to be finitely presented. To fix the ideas, we will assume that they are obtained applying homology  in a fixed degree, say $q$, over a fixed coefficient field $\F$, to a tame and one-critical (cf.~\cite{Bapat2022}) $n$-parameter filtration  $\KK$ of a finite simplicial complex: $M=H_q(\KK;\F)$. 

In the particular case when $n=1$, a finitely presented persistence module $M$ can be uniquely decomposed as a finite sum of interval modules \cite{Zomordian-Carlsson2005}.

\begin{definition}[Interval Module]
 An {\em interval module} is a $1$-parameter persistence module of the form $I[b,d)$ with $b<d\le \infty\in \overline{\RR}:=\RR\cup\{\infty\}$ such that, for every $s\le  t\in \RR$,
\[I[b,d)_s=\left\{\begin{array}{cc}\F & \text{if } b\le s<d\\ 0 & \text{otherwise}\end{array}\right. ,\quad I[b,d)(s\le t)=\left\{\begin{array}{cc}\mathrm{id_\F} & \text{if } b\le s\le t<d\\ 0 & \text{otherwise.}\end{array}\right.\]
 \end{definition}
 
The interval $[b,d)$ can be represented as a point $(b,d)$ in $\RR\times \overline{\RR}$, above the diagonal. This encoding permits defining persistence diagrams.

\begin{definition}[Persistence Diagram]
If $M\cong \bigoplus_{j\in J} I[b_j,d_j)^{m_j}$, then the {\em persistence diagram} of $M$, denoted $\dgm M$, is the finite multi-set of points $(b_j,d_j)$ of $\RR\times \overline{\RR}$ with multiplicity $m_j$ for $j\in J$.
\end{definition}

We now review the definitions of bottleneck distance between persistence diagrams and matching distance between two $n$-parameter persistence modules $M$ and $N$.
 \\

 
 When $n=1$, the {\em bottleneck distance}  $d_B$ is an easily computable extended metric on persistence diagrams \cite{KMN17}, defined as follows.
 
\begin{definition}[Bottleneck Distance] Let $M$, $N$ be two $1$-parameter persistence modules, with persistence diagrams $\dgm M$ and $\dgm N$.  
 
 A {\em matching} between $\dgm M $ and $\dgm N$ is a multi-bijection $\sigma: P \rightarrow Q$ between the points of a multi-subset $P$ in $\dgm M$ and a multi-subset $Q$ in $\dgm N$.
 
 The {\em cost of a matching $\sigma$}, denoted $c(\sigma)$, is the greatest among the costs of each matched pair of points, taken to be equal to the $\ell^\infty$ distance of the corresponding pair of points in $\RR\times \overline{\RR}$, with the convention that $\infty-\infty=0$ and $\infty-a=\infty$ for every $a\in \RR$, and  the costs of each unmatched point, taken to be equal to  the $\ell^\infty$ distance of that  point to the diagonal of $\RR^2$:
$$c(\sigma):=\max \left\{\max_{p\in P}\|p-\sigma(p)\|_\infty,\max_{p\notin P\coprod Q}\frac{p_2-p_1}{2}\right\} .$$

The {\em bottleneck distance} $d_B$ is defined as the smallest possible cost of any matching $\sigma$ between $\dgm M$ and $\dgm N$:
 $$d_B(\dgm M, \dgm N):= \min_{\substack{\sigma: P\to Q\\ P\subseteq \dgm M, Q\subseteq \dgm N}} c(\sigma). $$ 

\end{definition}

Observe that a matching $\sigma$ that pairs a point $(b,d)\in\RR^2$ with a point $(b',\infty) \in \RR\times \overline{\RR}$ has cost equal to infinity. Hence, the bottleneck distance between two persistence diagrams with a different number of points at infinity cannot be finite. On the contrary, matching points of the same type  always gives a finite (more convenient) cost that can be expressed as follows.

\begin{remark} By definition, the cost $c(\sigma)$ of $\sigma:P\to Q$ takes one of the two forms.
\begin{itemize}
\item If $c(\sigma)$ is realized by matching $p\in P$ to $q=\sigma(p)$, then $c(\sigma)=|s-t|$, where $s$ and $t$ are either both abscissas or both ordinates of $p$ and $q$, respectively; 
\item If $c(\sigma)$ is realized by some $p$ not in $P\coprod Q$, then $c(\sigma)=\frac{|s-t|}{2}$,  where $s$ and $t$ are the abscissa and the ordinate of $p$.
\end{itemize}

Briefly, we can say that $c(\sigma)$ is attained by some $\frac{|s-t|}{\delta}$,  with $\delta \in \{1,2\}$ and $s,t$ coordinates of points in $\dgm M\cup \dgm N$.
\end{remark}

When the number of parameters is $n\ge 2$, we can use the bottleneck distance to define an extended pseudo-metric between persistence modules $M$ and $N$  via restrictions to lines with positive slope.

The {\em restriction} of $M$ to a line of positive slope $L\subseteq \RR^n$  is the persistence module $M^L$ that assigns  $M_u$ to each $u\in L$, and whose transition maps $i_{M^L}(u,v)\colon (M^L)_u\to (M^L)_v$ for $u\preceq v\in L$ are the same as in $M$. Once a parameterization $u=\vec ms+b$ of $L$ is fixed, the persistence module $M^L$ is isomorphic to the $1$-parameter persistence module, by abuse of notation still denoted by $M^L$, that
\begin{itemize}
\item assigns to each $s\in\RR$ the vector space $(M^L)_s:=M_u$, and
\item whose transition map between $(M^L)_s$ and $(M^L)_t$ for $s\le t$ is equal to that of $M$ between $M_u$ and $M_v$ with $u=\vec ms+b$ and $v=\vec mt+b$.
\end{itemize}

\begin{definition}[Matching Distance] The {\em matching distance} between $M$ and $N$  is defined by
\[ 
\displaystyle d_{match}(M, N) := \displaystyle \sup_{L\colon u=s\vec m+b} \hat m^L \cdot d_B(\dgm M^{L}, \dgm N^{L}) 
\]
\noindent 
where $\hat m^L:= \min\{m_i\mid i=1,\ldots, n\}>0$ is the minimal coordinate of the direction vector $\vec m$ of $L$, and $L$ varies among all the lines of $\RR^n$ with positive slope, taken with standard normalization. 
\end{definition}

The role of the weight $\hat m^L$ in the definition of the matching distance is that of ensuring stability of persistence modules against perturbations of the filtrations that originate them (cf.~\cite{Cerri-et-al2013,Landi2018}). Such weight would vanish for lines parallel to the coordinate axes. Therefore, such lines are not considered in the definition of the matching distance. 

In the context of the matching distance between multi-parameter persistence modules, it is more convenient to include the factor $\hat m^L$ when computing the cost of a matching $\sigma^L$ between the points of a multi-subset in $\dgm M^{L}$ and the points of a multi-subset in $\dgm N^{L}$ for a fixed line $L$ with positive slope. Therefore, we introduce the following notation:
$${\rm cost}(\sigma^L) := \hat m^L c(\sigma^L) \ .$$

 Examples of computation of the matching distance in some simple cases are given in \Cref{sec:examples}. 

By definition, the matching distance is computed by taking the supremum of the bottleneck distance over all (infinitely many) lines with positive slope through the parameter space. However, in \cite{Bapat2022}, we have shown that this set of lines may be partitioned into a finite number of equivalence classes. For the two-parameter case (i.e., for $n = 2$), we will use these equivalence classes to generate a finite set of lines from which one may compute the matching distance. 

Henceforth in this paper, we always assume $n = 2$, which is the case for which we have  explicit results.  

We first review the notation and definitions (with associated figures) necessary to understand these equivalence classes following \cite{Bapat2022} for the two-parameter case. We summarize the relationship with analogous notions in the literature in  \Cref{sec:rel-work1}.

\begin{definition}[Positive Cone]\label{def:poscone}
For every point  $u$ in $\RR^2$, let $S_+(u)$ be the {\em positive cone with vertex $u$}: 
$$S_+(u):=\{v\in\RR^2:u\preceq v\} \ .$$ 

The boundary of the positive cone, $\partial S_+(u)$, decomposes into open faces.  In particular, $\partial S_+(u)$ can be partitioned by non-empty subsets $A$ of $\{1,2\}$ in the following way.  For $\emptyset\neq A\subseteq\{1,2\}$, define
\[
S_A(u):=\begin{cases}
\{u\} & \textrm { if } A=\{1,2\} \ ,\\
\{(x_1,x_2)\in\RR^2 \  | \ x_1=u_1, x_2>u_2 \}& \textrm{ if } A=\{1\} \ ,\\
\{ (x_1,x_2)\in\RR^2 \  | \ x_1>u_1, x_2=u_2 \} & \textrm{ if } A=\{2\} \ .\\
\end{cases}
\]

\begin{figure}[H]
\begin{center}
\begin{tikzpicture}
\input{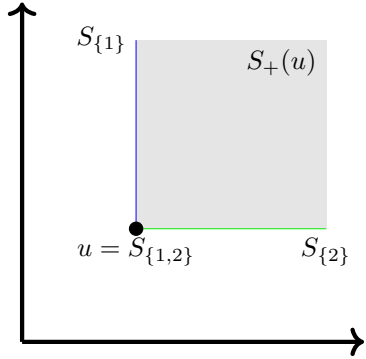}
\end{tikzpicture}
\caption{The positive cone $S_+(u)$ of $u\in\RR^2$ and the decomposition of its boundary into $S_{\{1\}}$, $S_{\{2\}}$ and $S_{\{1,2\}}$, which correspond respectively to the vertical boundary, horizontal boundary, and $u$.}\label{fig:positive_cone}
\end{center}
\end{figure}
\end{definition}

The following definition initially appears in \cite{Lesnick-Wright2015}.

\begin{definition}\label{def:push} A line $L$ with positive slope intersects $\partial S_+(u)$ in exactly one point, which we call the {\em push of $u$ onto $L$}, and denote by $\mathrm{push}_L(u)$:

$$\mathrm{push}_L(u):=L\cap \partial S_+(u)\ .$$ 

\begin{figure}[H]
\begin{center}
\begin{tikzpicture}
\input{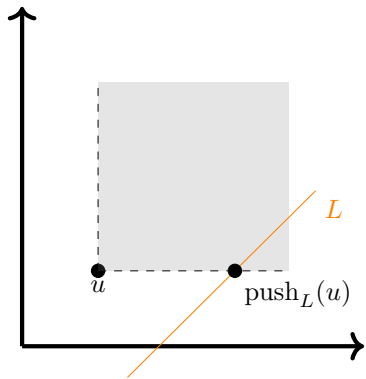}
\end{tikzpicture}
\caption{The push of $u$ along the line $L$. In this case, $A^L_u=\{2\}$, and so $u$ is to the left of $L$.}\label{fig:push}
\end{center}
\end{figure}
\end{definition}

Because of the partition of $\partial S_+(u)$ by  its open faces, there is a unique non-empty subset of $\{1,2\}$, denoted $A^L_u$, 
 such that $ \mathrm{push}_L(u)=L\cap  S_{A^L_u}(u)$.   Concretely, in the plane, ${A^L_u}=\{1\}$ means that $u$ is strictly to the right of $L$, ${A^L_u}=\{2\}$ means that $u$ is strictly to the left of $L$, ${A^L_u}=\{1,2 \}$ means that $u$  is on $L$.  More generally, when $\{1\}\subseteq{A^L_u}$, we say $u$ is to the right of $L$, and when $\{2\}\subseteq{A^L_u}$, we say $u$ is to the right of $L$, allowing for the fact that it may be true in either of these cases that $u$ is on $L$.
 
  We say that two lines $L,L'\subseteq \RR^2$ with positive slope have the {\em same reciprocal position} with respect to $u$ if and only if $A^L_u=A^{L'}_u$. 
  Given a non-empty subset $U$ of $\RR^2$, we write $L\sim _{U}L'$ if $L$ and $L'$ have the same reciprocal position with respect to $u$ for all $u \in U$. 
  Note that $\sim_U$ is an equivalence relation on lines with positive slope in $\RR^2$ \cite{Bapat2022}.

In this paper, it is necessary for us to extend this equivalence relation to include points in the projective completion of the real plane, $\mathbb{P}^2=\RR^2\cup\ell_\infty$,  with  $\ell_{\infty}$ the line at infinity of $\mathbb{P}^2$. Points on the line at infinity are given by homogeneous coordinates $[x_0:x_1:x_2]$ with $x_0=0.$  Note that a line with positive slope in $\RR^2$ will intersect $\ell_{\infty}$ at exactly one point $[0:v_1:v_2]$ with $\vec v= (v_1,v_2)\succ 0$   giving the direction of the line.

Given a point $v=[0:v_1:v_2] \in \ell_{\infty}$ with $v_1,v_2>0$, define
\[
S_A(v):=
\begin{cases}
\{v\} & \textrm { if } A=\{1,2\} \ ,\\
\{[0:x_1:x_2]\in \ell_{\infty} \  | \ x_1=v_1, x_2>v_2 \}& \textrm{ if } A=\{1\} \ ,\\
\{[0:x_1:x_2]\in \ell_{\infty} \  | \ x_1>v_1, x_2=v_2 \}& \textrm{ if } A=\{2\} \ .\\
\end{cases}
\]
For every such $v\in \mathbb{P}^2$ and line $L \subset \RR^2$ with positive slope, define $A^L_v$ to be the largest subset of $\{1,2\}$ such that $L \cap S_{A^L_v} \neq \emptyset$. As above, we say that two lines $L,L'$ in $\mathbb{R}^2$ with positive slope are in the {\em same reciprocal position} with respect to $v$ if and only if $A^L_v = A^{L'}_v$.

We then extend the equivalence relation on lines in $\RR^2$ with positive slope defined in \cite{Bapat2022} as follows: two lines $L$ and $L'$ are equivalent if and only if they are in the same reciprocal position with respect to a finite set $U$ of points in $\mathbb{P}^2$: 
\[
L \sim_U L' \text{ if and only if } A^L_u = A^{L'}_u  \;  \forall u \in U \ .
\]

We introduce the following notation for simplicity.
For a line  $L\subset \RR^2$  parameterized as $\vec{m}s + b$, and for $u \in \mathbb{R}^2$, we set $p_L(u)$ to be the unique real number such that 
\[\push_L(u) = \vec{m}\cdot p_L(u) + b.\]
In other words, $p_L(u)$ is the parameter value of the push of $u$ to $L$. 
We note that $p_L(u)$ depends on the chosen parameterization of the line $L$.
However, we implicitly assume that the parameterization we choose is the standardly normalized one.

The formula for the quantity $p_L(u)$ depends on whether $u$ lies to the left (i.e. $A^L_u=\{2\}$) or to the right (i.e. $A^L_u=\{1\}$) of the line $L$.
If $u$ lies to the left of $L$, then 
\begin{equation}\label{eq:param-1}
p_L(u) = \frac{u_2}{m_2} - \frac{b_2}{m_2}.
\end{equation}
If $u$ lies to the right of $L$, then 
\begin{equation}\label{eq:param-2}
p_L(u) = \frac{u_1}{m_1} - \frac{b_1}{m_1}.
\end{equation}
If $u$ lies on $L$, then 
\begin{equation}\label{eq:param-3}
p_L(u) = \frac{u_1}{m_1} - \frac{b_1}{m_1}= \frac{u_2}{m_2} - \frac{b_2}{m_2}.
\end{equation}

Note that, thanks to the standard parametrization of a line $L$, pushing  any two points $u,v\in \RR^2$ onto $L$ yields parameter values $p_L(u), p_L(v)\in \RR$ such that 
\begin{equation}\label{eq:param-diff}
 p_L(u)-p_L(v)=\left\{\begin{array}{ll}
\push_L(u)_2-\push_L(v)_2  & \mbox{ if $L$ has slope greater than 1,}\\
 \push_L(u)_1-\push_L(v)_1  & \mbox{ otherwise.} 
\end{array}\right. 
\end{equation}

For sufficiently nice modules (such as those that satisfy Property (P) from \Cref{sec:workingassumption}), there is a finite set $C$ of points in $\RR^n$ with the property that, for a given line $L$, the set $\left\{\push_L(c)|c\in C\right\}$
can be used to determine the coordinates of the points in the persistence diagram associated to $L$.
(See, for example, 
\cite[Thm. 2]{Bapat2022})

Let $L$ and $L'$ be lines in $\RR^2$ with positive slope parameterized by $L\colon u =\vec m s+b$ and $L'\colon u' =\vec m ' s'+b'$ respectively.  Let $S$ be the set of all matchings between $M^L$ and $N^L$, and let $S'$ be the set of all matchings between $M^{L'}$ and $N^{L'}$.

\begin{lemma}\label{lem:Gamma}  If $L\sim_{\closure C_M\cup \closure C_N}L'$, then there exists a bijection $\Gamma_{L,L'}$ between $S$ and $S'$.
\end{lemma}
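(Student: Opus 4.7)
The plan is to bootstrap the bijection $\Gamma_{L,L'}$ on matchings from the already-established bijection on persistence diagrams, namely Thm.~2 of \cite{Bapat2022}, which guarantees that if $L\sim_{\closure C_M\cup \closure C_N}L'$ then there are bijections
\[
\phi_M\colon \dgm M^L \longrightarrow \dgm M^{L'}, \qquad \phi_N\colon \dgm N^L \longrightarrow \dgm N^{L'},
\]
of multi-sets, each respecting multiplicities. Intuitively, these bijections simply relabel each birth-death pair by re-pushing its defining critical values onto the new line, and the equivalence $\sim_{\closure C_M\cup \closure C_N}$ ensures that the combinatorial birth/death ordering, and hence the interval decomposition, is preserved.

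Given a matching $\sigma\colon P\to Q$ between a multi-subset $P\subseteq \dgm M^L$ and a multi-subset $Q\subseteq \dgm N^L$, I would define
\[
\Gamma_{L,L'}(\sigma) \;:=\; \phi_N\circ \sigma \circ \phi_M^{-1}\colon\; \phi_M(P)\longrightarrow \phi_N(Q).
\]
Because $\phi_M$ and $\phi_N$ are multi-bijections on the full diagrams, $\phi_M(P)$ is a multi-subset of $\dgm M^{L'}$ of the same cardinality as $P$, and likewise for $\phi_N(Q)$; hence $\Gamma_{L,L'}(\sigma)$ is a well-defined multi-bijection between multi-subsets of $\dgm M^{L'}$ and $\dgm N^{L'}$, i.e. an element of $S'$.

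To see that $\Gamma_{L,L'}$ is itself a bijection between $S$ and $S'$, I would apply the symmetric construction. Since $L\sim_{\closure C_M\cup \closure C_N}L'$ iff $L'\sim_{\closure C_M\cup \closure C_N}L$, the same argument produces a map $\Gamma_{L',L}\colon S'\to S$ using $\phi_M^{-1}$ and $\phi_N^{-1}$. A direct check shows $\Gamma_{L',L}\circ \Gamma_{L,L'} = \id_S$ and $\Gamma_{L,L'}\circ \Gamma_{L',L} = \id_{S'}$, giving the claimed bijection.

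The only delicate point — which I would highlight but not grind through — is the bookkeeping with multi-subsets rather than sets: one has to verify that $\phi_M$ and $\phi_N$ restrict to bijections on \emph{arbitrary} multi-subsets (not only on the whole diagrams), which follows because a bijection of multi-sets induces a bijection between multi-subsets of matched multiplicities. No other ingredient is needed beyond \cite{Bapat2022}[Thm.~2], so I expect the proof to be short and mostly a matter of making the formal definition of $\Gamma_{L,L'}$ and invoking the prior theorem.
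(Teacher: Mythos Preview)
Your proposal is correct and essentially identical to the paper's own proof: the paper invokes the multi-bijections $\gamma_M,\gamma_N$ from \cite{Bapat2022}, defines $\Gamma_{L,L'}(\sigma)=\gamma_N\circ\sigma\circ\gamma_M^{-1}$, and observes that the symmetric construction furnishes the inverse. Your write-up is slightly more explicit about the multi-subset bookkeeping, but the argument is the same.
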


\begin{proof}
  Since $L\sim_{\closure C_M\cup \closure C_N}L'$, we know that $L\sim_{\overline{C}_M}L'$ and $L\sim_{\overline{C}_N}L'$.  Therefore, by our previous paper \cite{Bapat2022}, there exist multi-bijections $\gamma_M\colon\dgm M^L\to\dgm M^{L'}$ and $\gamma_N\colon\dgm N^L\to\dgm N^{L'}$.   For any matching $\sigma \colon P\to Q$ in $S$ such that $P \subseteq\dgm M^L$ and $Q\subseteq N^L$, set  $\Gamma_{L, L'}(\sigma)=\gamma_N\circ\sigma\circ(\gamma_M)^{-1}$.
  In other words, the following diagram commutes:

\[\begin{tikzcd}
\dgm M^L \arrow{r}{\sigma} \arrow{d}{\gamma_M} & \dgm N^L \arrow{d}{\gamma_N} \\
\dgm M^{L'} \arrow{r}{\Gamma_{L,L'}(\sigma)} &\dgm N^{L'}
\end{tikzcd}\]

Since $\gamma_M$ and $\gamma_N$ are bijections, $\Gamma_{L,L'}(\sigma)$ is easily seen to be a matching of $S'$. Similarly, 
we could have just as easily taken a matching $\tau\in S'$, and created a matching $(\gamma_N)^{-1}\circ\tau\circ\gamma_M$ in $S$.  Therefore, $\Gamma_{L,L'}$ establishes a bijection between the sets $S$ and $S'$.
\end{proof}


More explicitly, the multi-bijection $\gamma_M$ behaves as follows.

Let $u,v \in \overline{C}_M$ be such that the pair $(p_{L'}(u),p_{L'}(v))\in\dgm M^{L'}$. Then $\gamma_M$ gives a bijection between the parametrized values of pairs $u,v\in\overline{C}_M$ on $L'$ and $L$, i.e.,  
 $$\gamma_M^{-1}(p_{L'}(u),p_{L'}(v))=(p_{L}(u),p_{L}(v))\in\dgm M^L.$$  

Under the assumptions above, since $L\sim_{\overline{C}_M}L'$, both lines intersect the same face of the positive cone of $u$.
This means that there exists some $i\in\{1,2\}$ such that $(\vec m \cdot p_{L}(u)+ b)_i=(\vec m'\cdot p_{L'}(u)'+b')_i=u_i$.  This enables us to solve for $p_{L'}(u)$ from $p_{L}(u)$:

\begin{align}
(\vec m \cdot p_{L}(u)+b)_i=(\vec m' \cdot p_{L'}(u)+b')_i &\Leftrightarrow \nonumber\\
m_i p_{L}(u)+b_i=m_i' p_{L'}(u)+b'_i  &  \Leftrightarrow \nonumber\\
p_{L'}(u)=\frac{m_i}{m_i' }p_{L}(u)+\frac{b_i-b_i'}{m_i'}  \ .  & \label{eq:s'}
\end{align}

Similarly, there is some $j\in\{1,2\}$ such that
\begin{equation*}
p_{L'}(v)=\frac{m_j}{m_j' }p_{L}(v)+\frac{b_j-b_j'}{m_j'}.
\end{equation*}

Analogously, if $u \in \overline{C}_M$ is such that the pair $(p_{L'}(u),\infty)\in\dgm M^{L'}$, we can solve for $p_{L'}(u)$ from $p_{L}(u)$ as in \eqref{eq:s'}.

\subsection{Connections to related works}\label{sec:rel-work1}
As mentioned in \Cref{sec:introduction}, there are other papers that look to understand the matching distance for bi-persistence modules.  Our approach to computing the matching distance in this paper is closely related to two groups of papers:
\begin{itemize}
    \item The papers \cite{Kerber-Lesnick-Oudot2018,Lesnick-Wright2015} present an approach in the dual space of lines, and 
    \item The papers \cite{Cerri2019geometrical, Cerri2020,ethier2023geometry} present an approach in topological persistence.
\end{itemize}
Where appropriate throughout the paper, we will highlight the similarities and differences between our paper and these two groups (termed the ``dual approach" and the ``topological approach", respectively).  The first such section here will provide connections and analogues to the dual and topological approach for the content in \Cref{sec:notation}.

Firstly, note that \Cref{def:poscone} is related to the construction of the\emph{ extended Pareto grid} of topological persistence from \cite{Cerri2019geometrical}.  One of the main differences between this paper and \cite{Cerri2019geometrical} is the fact that we assume we have a finite number of critical parameter values which determine the persistence diagrams along lines, while the persistence diagram along a line $L$ in \cite{Cerri2019geometrical} is determined by the intersection of $L$ with the extended Pareto grid (see Theorem 2 in \cite{Cerri2019geometrical}).

Secondly, we note that the bijection $\Gamma_{L,L'}$ from \Cref{lem:Gamma} corresponds to the transport of matchings introduced in \cite{Cerri2019geometrical} (Subsection 4.1).
Specifically, the map $\Gamma_{L,L'}$ associates the matching corresponding to the
line $L$ with the matching corresponding to the line $L'$, constructed by
tracking the movement of points in the persistence diagrams as $L$ transitions to $L'$. A similar concept also appears in \cite{Kerber-Lesnick-Oudot2018}, in the proof of Theorem 4.6.  In all three papers (\cite{Kerber-Lesnick-Oudot2018,Cerri2019geometrical} and this paper) it is necessary to identify a bijection between points in the persistence diagrams along $L$ and $L'$, in order to bound the weighted cost of the bottleneck distance on some equivalence class of lines.  This bound and the finiteness of the number of such equivalence classes is what allows us to obtain a maximum for the matching distance.

\subsection{Working assumptions}\label{sec:workingassumption}

Our goal in this paper is to identify a finite set of lines with positive slope from which to compute the matching distance between two finitely presented persistence modules, $M$ and $N$. Our strategy is that of partitioning the set of lines with positive slope  into equivalence classes where it is easy to understand how the cost of a matching between persistence diagrams along lines changes when moving such lines around, for example by translations and rotations.  

To this end, we start with requesting that the following property holds for all the $n$-parameter persistence modules $M$  considered hereafter.

\begin{property*}\label{property:p}[{\bf P}]
There exists a finite set  $C_M$ of points in $\RR^n$, called {\em critical values}, for which the ranks of transition maps in the $n$-parameter persistence module $M$ are completely determined by the ranks of transition maps between values in $\closure C_M$, the closure of $C_M$ under least upper bound.  More specifically, for any $u,v\in\RR^n$,
\[\rank i_{M}(u,v)=\rank i_{M} (\bar{u},\bar{v})\]
   with 
   \[\closure{u} = \max \{u' \in \closure C_M| u'\preceq u\},\]
   \[\closure{v} = \max \{v' \in \closure C_M | v'\preceq v\}\] 
if $\{u'\in \closure C_M| u'\preceq u\}$ is non-empty, and $\rank i_{M}(u,v)=0$ otherwise. 
Note that $C_M$ is empty if and only if $M$ is trivial.
\end{property*}

In the case of a finitely presented persistence module $M$, one such set $C_M$ exists and is given by the set of grades of its generators and relators.  
More concretely, Property (P) is satisfied by modules $M$  obtained by taking the persistent homology over a filed $\F$ of simplicial complexes $K$ filtered by a tame and one-critical filtration $\KK=\{K^u\}_{u\in\RR^n}$: $M = \h_q(\KK; \F)$. Indeed, fixing a discrete gradient vector field $V$ consistent with the filtration $\KK$ and defining $C_M$ to be the set of entrance values of simplices $\sigma$  that are critical in the gradient vector field $V$,  $M$ satisfies Property (P) with set of critical values given by $C_M$ as proved in \cite[Thm. 1]{Bapat2022}.

If both $M$ and $N$ have Property (P), then we may partition the set of lines with positive slope in $\mathbb{R}^n$ via the equivalence relation $\sim_{\closure C_M\cup \closure C_N}$. For any two lines $L\sim_{\closure C_M\cup \closure C_N}L'$, the persistence diagrams $\dgm M^L$ and $\dgm M^{L'}$ (respectively $\dgm N^L$ and $\dgm N^{L'}$) are in bijection \cite{Bapat2022}[Thm. 2].  
 
 As we will see in \Cref{lem:Gamma} later, these bijections will extend to a bijection, denoted $\Gamma_{L,L'}$,  between the sets of matchings that are used to compute both $d_B(M^L,N^L)$ and $d_B(M^{L'},N^{L'})$ .  Moreover, the bijection $\Gamma_{L,L'}$ can be used to compute the cost of a matching for line $L'$ from the cost of the corresponding matching for line $L$, provided the lines are in the same equivalence class. 

Ideally, one would like to use $\Gamma_{L,L'}$ to show that a line $L$ on which $d_B(M^L,N^L)$ attains a maximum is a line which passes through two points in $\closure C_M\cup \closure C_N.$  However, this is not the case, as \Cref{ex:need-diag,ex:need-omega} show.  The reason is that, within an $\sim_{\closure C_M\cup \closure C_N}$-equivalence class, it is possible  that there is no ``winning'' matched pair of points or ``winning'' matching that works for all lines within that equivalence class, i.e., the winning matching may ``switch" within the same $\sim_{\closure C_M\cup \closure C_N}$ equivalence class.

Therefore, the equivalence relation $\sim_{\closure C_M\cup \closure C_N}$ must be refined, in order to generate equivalence classes in which there exists at least one matching and matched pair which computes the bottleneck distance for all lines in a given equivalence class.  This requires the addition of {\em switch points}: points $\omega$ in the projective completion for which the cost of matching some pair $u,v$ equals the cost of matching some other pair $w,x$, with $u,v,w,x\in\closure C_M\cup \closure C_N$ for any line through $\omega$.

 So, with the goal of computing the matching distance using the method described above in the case $n=2$, we introduce another property, called (Q),  that critical values $C_M$ and $C_N$ of  2-parameter persistence modules $M$ and $N$ are required to satisfy, involving a set of switch points.

\begin{property*}[{\bf Q}]\label{propq}
There exists a finite set of points $\Omega = \Omega(C_M,C_N)$ in $\mathbb{P}^2$ such that for all $u, v, w, x \in \closure C_M\cup \closure C_N$, not necessarily distinct, and $\delta, \eta \in \{1,2\}$, also not necessarily distinct, the sign (positive, negative or null) of 
\[\Delta_L(u,v,w,x;\delta,\eta):=\frac{|p_L(u) - p_L(v)|}{\delta} - \frac{|p_L(w) - p_L(x)|}{\eta},\]
remains the same for all lines $L$ in the same equivalence class with respect to $\closure C_M\cup \closure C_N \cup \Omega$.
\end{property*}

 Once the equivalence relation $\sim_{\closure C_M\cup \closure C_N}$ is refined to $\sim_{\closure C_M\cup \closure C_N\cup{ \Omega}}$, a winning matching $\sigma$ and a winning matched pair $u,v\in \closure C_M\cup \closure C_N$ can be determined for each equivalence class. Indeed,  Property (Q) implies the following Property (Q'), as \Cref{prop:Q} will prove.

\begin{property*}[\bf Q']\label{propq'}
There exists a finite set of points $\Omega = \Omega(C_M,C_N)$ in $\mathbb{P}^2$ with the following property.
Let $L$ be any line, and let $\sigma$ be an optimal matching  with respect to computing the bottleneck distance between $\dgm M^L$ and $\dgm N^L$.
Suppose that the cost $c(\sigma)$ of the matching $\sigma$ is realised by the pushes of $u,v \in \closure C_M\cup \closure C_N$ onto $L$.
Then, if $L'$ is any other line in the same equivalence class as $L$ with respect to $\closure{C}_M\cup\closure{C}_N \cup \Omega$, the matching $\Gamma_{L,L'}(\sigma)$ is an optimal matching with respect to computing the bottleneck distance between $\dgm M^{L'}$ and $\dgm N^{L'}$.
Moreover, its cost $c(\Gamma_{L,L'}(\sigma))$ is realised by the pushes of $u$ and $v$ onto $L'$.
In other words,
\[c(\sigma) = \frac{|p_L(u) - p_L(v)|}{\delta} \iff c(\Gamma_{L,L'}(\sigma)) = \frac{|p_{L'}(u) - p_{L'}(v)|}{\delta}.\]
\end{property*}


\begin{proposition}\label{prop:Q}
 Property (Q)  implies Property (Q'). 
\end{proposition}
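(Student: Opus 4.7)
The plan is to reduce the statement to a comparison of finitely many quantities of the form $\frac{|p_L(u)-p_L(v)|}{\delta}$ with $u,v\in\closure C_M\cup\closure C_N$ and $\delta\in\{1,2\}$, and then repeatedly invoke Property (Q) to transport sign information from $L$ to $L'$. The key bookkeeping observation is that every cost contribution that could appear in $c(\tau)$, for any matching $\tau$ between $\dgm M^L$ and $\dgm N^L$, has this form. Indeed, combining Property (P) with the decomposition of $M^L$ and $N^L$ into interval modules (the framework of \cite{Bapat2022}), each abscissa and each ordinate of a point in $\dgm M^L\cup\dgm N^L$ is the parameter value $p_L(\cdot)$ of the push of some element of $\closure C_M\cup\closure C_N$ onto $L$. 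The Remark on the possible forms of $c(\sigma)$ then gives $\delta=1$ for a matched-pair contribution (the $\ell^\infty$-distance is attained in a single coordinate) and $\delta=2$ for an unmatched-point contribution (distance to the diagonal). So every potential contribution is indexed by a tuple $(u,v,\delta)$ with value $\frac{|p_L(u)-p_L(v)|}{\delta}$ on $L$.

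Next I would fix an arbitrary matching $\tau$ and argue that the pair realising its cost is preserved within the equivalence class. The contributions of $\tau$ form a finite collection $\{C_i^\tau(L)\}$ of the above form, and $c(\tau)=\max_i C_i^\tau(L)$. By Property (Q) the sign of $C_i^\tau(L)-C_j^\tau(L)$ is constant along the equivalence class of $L$ with respect to $\closure C_M\cup\closure C_N\cup\Omega$, so the same index $i^*$ attains the maximum on both $L$ and $L'$. Hence $c(\Gamma_{L,L'}(\tau))$ is realised by the same critical pair and the same $\delta$ as $c(\tau)$. Applied to $\sigma$ itself, this already yields
\[
c(\Gamma_{L,L'}(\sigma)) \;=\; \frac{|p_{L'}(u)-p_{L'}(v)|}{\delta},
\]
which is the ``realisation'' half of (Q').

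Finally I would upgrade this to full optimality by comparing $\sigma$ to an arbitrary competitor $\tau$ on $L$. Optimality of $\sigma$ gives $c(\tau)\ge c(\sigma)$; writing each side in the form produced by the previous step, this is exactly the non-negativity of some $\Delta(w,x,u,v;\eta,\delta)$ on $L$, so Property (Q) preserves the inequality on $L'$. Combined with the preservation of the maximising index for both $\sigma$ and $\tau$, this gives $c(\Gamma_{L,L'}(\tau))\ge c(\Gamma_{L,L'}(\sigma))$. Since the forward-referenced \Cref{lem:Gamma} guarantees that $\Gamma_{L,L'}$ is a bijection between the respective sets of matchings, $\Gamma_{L,L'}(\sigma)$ is therefore optimal on $L'$, with the claimed cost-realising pair.

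The main obstacle I expect is the first step: extracting cleanly from Property (P) and the bijection between persistence diagrams in \cite{Bapat2022} that \emph{every} coordinate appearing in $\dgm M^L\cup\dgm N^L$, including those arising from essential classes and from unmatched points' distance to the diagonal, is honestly of the form $p_L(u)$ with $u\in\closure C_M\cup\closure C_N$, and that the assignment $(u,v,\delta)\mapsto \frac{|p_L(u)-p_L(v)|}{\delta}$ is compatible with the bijection $\Gamma_{L,L'}$. Once this dictionary between diagram contributions and critical-value pairs is in place, Property (Q) does the rest of the work mechanically.
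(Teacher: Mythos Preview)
Your proposal is correct and follows essentially the same approach as the paper's proof: both argue that Property~(Q) preserves (i) the pair realising the cost of any fixed matching and (ii) the inequality $c(\tau)\le c(\sigma)$ between matchings, and then conclude via the bijection $\Gamma_{L,L'}$. Your write-up is in fact more explicit than the paper's about why (i) holds (applying (Q) to all pairwise differences of contributions within a single matching), whereas the paper asserts this step more tersely.
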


\begin{proof}
Choose a line $L$ and let $\sigma$ be a matching between $\dgm M^L$ and $\dgm N^L$ that has cost
\[c(\sigma) = \frac{|p_L(u) - p_L(v)|}{\delta_\sigma}\]
for some $u, v \in \closure C_M\cup \closure C_N$ and $\delta_\sigma \in \{1,2\}$.

Let $L'$ be a line that is equivalent to $L$ with respect to $\closure C_M\cup \closure C_N \cup \Omega$.
Property (Q) implies that 
\[c(\Gamma_{L,L'}(\sigma)) = \frac{|p_L(u) - p_L(v)|}{\delta_\sigma}.\]
This tells us that within an equivalence class under $\sim_{\closure C_M\cup \closure C_N \cup \Omega}$, if the pair $u,v \in \closure C_M\cup \closure C_N$ obtains the cost of $\sigma$, then that pair also obtains the cost of $\Gamma_{L,L'}(\sigma)$, although it may not be the unique such pair to do so.

Now suppose that $\tau$ is the optimal matching with respect to computing the bottleneck distance between $\dgm M^L$ and $\dgm N^L$. Let $u_\tau, v_\tau \in \closure C_M\cup \closure C_N$ and $\delta_\tau \in \{1,2\}$ such that  
\[c(\tau) = \frac{|p_L(u_\tau) - p_L(v_\tau)|}{\delta_\tau}.\]
Since $\tau$ is the optimal matching, for any other matching $\sigma$ between $\dgm M^L$ and $\dgm N^L$, 
it is true that $c(\tau) \leq c(\sigma)$.
Therefore, again by Property $(Q)$, we have 
\[c(\Gamma_{L,L'}(\tau)) \leq c(\Gamma_{L,L'}(\sigma)).\]
Therefore $\Gamma_{L,L'}(\tau)$ is a (possibly not unique) matching that obtains the bottleneck distance between $\dgm M^{L'}$ and $\dgm N^{L'}$ , and $u_\tau, v_\tau \in \closure C_M\cup \closure C_N$ is a (possibly not unique) pair of critical values which obtains the cost of $\Gamma_{L,L'}(\tau)$.
\end{proof}

Finally, because \Cref{ex:need-diag} shows that the diagonal direction is also needed, we set $\overline{\Omega}$ to be the union of $\Omega$ with the singleton $\{[0:1:1]\}$. This will allow for the computation of the matching distance between $M$ and $N$ from lines through two points in $\closure C_M\cup \closure C_N\cup \overline{\Omega}$ via \Cref{thm:main}, assuming that $M$ and $N$  satisfy properties (P) and  (Q).  

As mentioned above, all finitely presented persistence modules satisfy Property (P). In \Cref{sec:switch}, we provide the theoretical justification also for the existence of the set $\Omega$ ensuring Property (Q).

\subsection{Examples}\label{sec:examples}

We now give some simple examples of computation of the matching distance in the case of two parameters. The considered persistence modules will be direct sums of rectangle modules, i.e. persistence module that are trivial outside a supporting rectangle $[u_1,v_1)\times [u_2,v_2)$, with $u=(u_1,u_2)$, $v=(v_1,v_2)\in\RR^2$, and equal to $\F$ with all the transition maps equal the identity on the supporting rectangle with bottom left corner $u$ and a top right corner $v$.

\Cref{ex:need-diag} shows that it does not suffice to consider only lines though pairs of critical values of the modules. 

\begin{example}\label{ex:need-diag}
Let $M$ be the rectangle module with support $[2,\infty) \times [2,7)$, and let $N$ be the rectangle module with support $[2,\infty) \times [2,10)$. 

\begin{figure}[h!]
    \centering
    \includegraphics[width=50mm,scale=0.7]{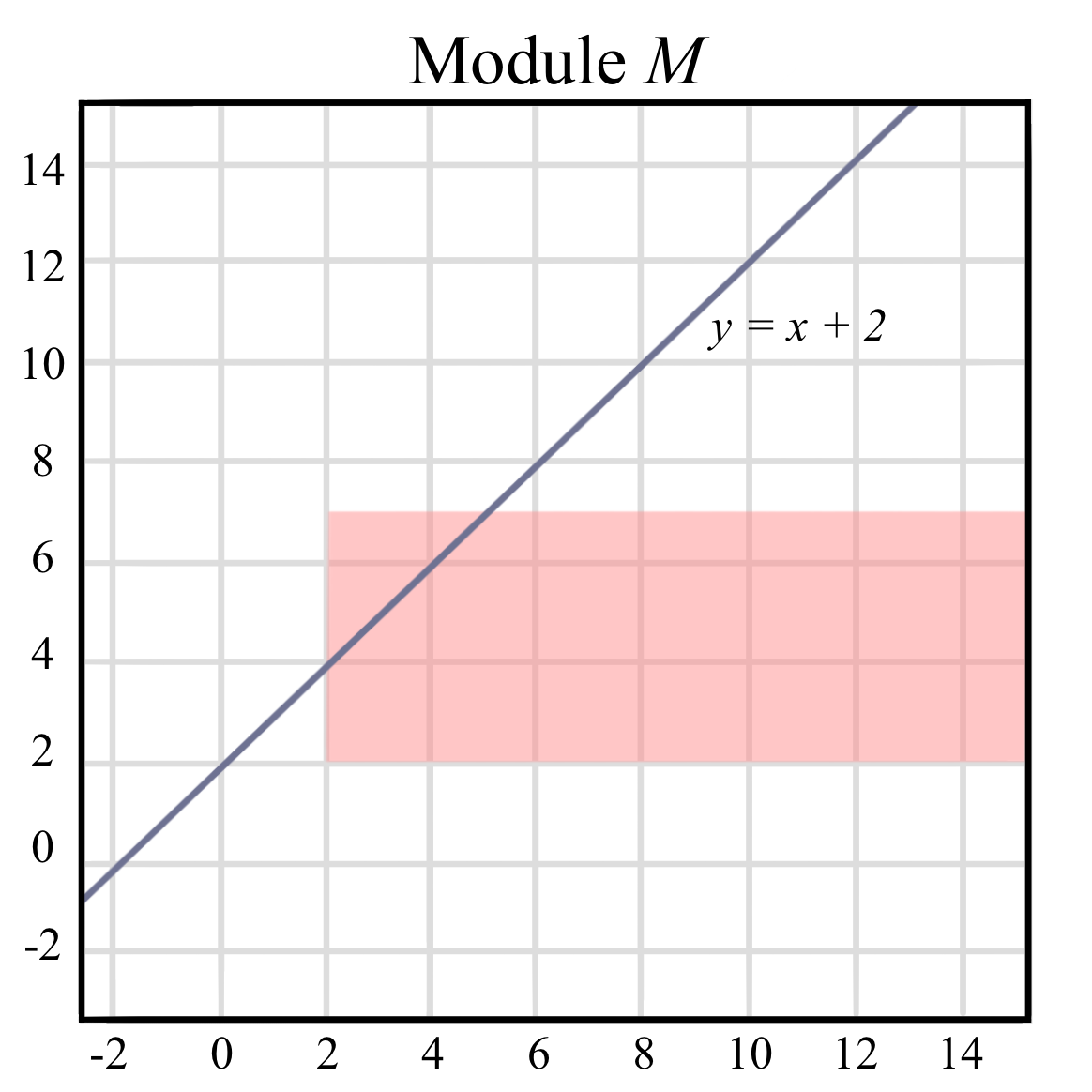}
    \includegraphics[width=50mm,scale=0.7]{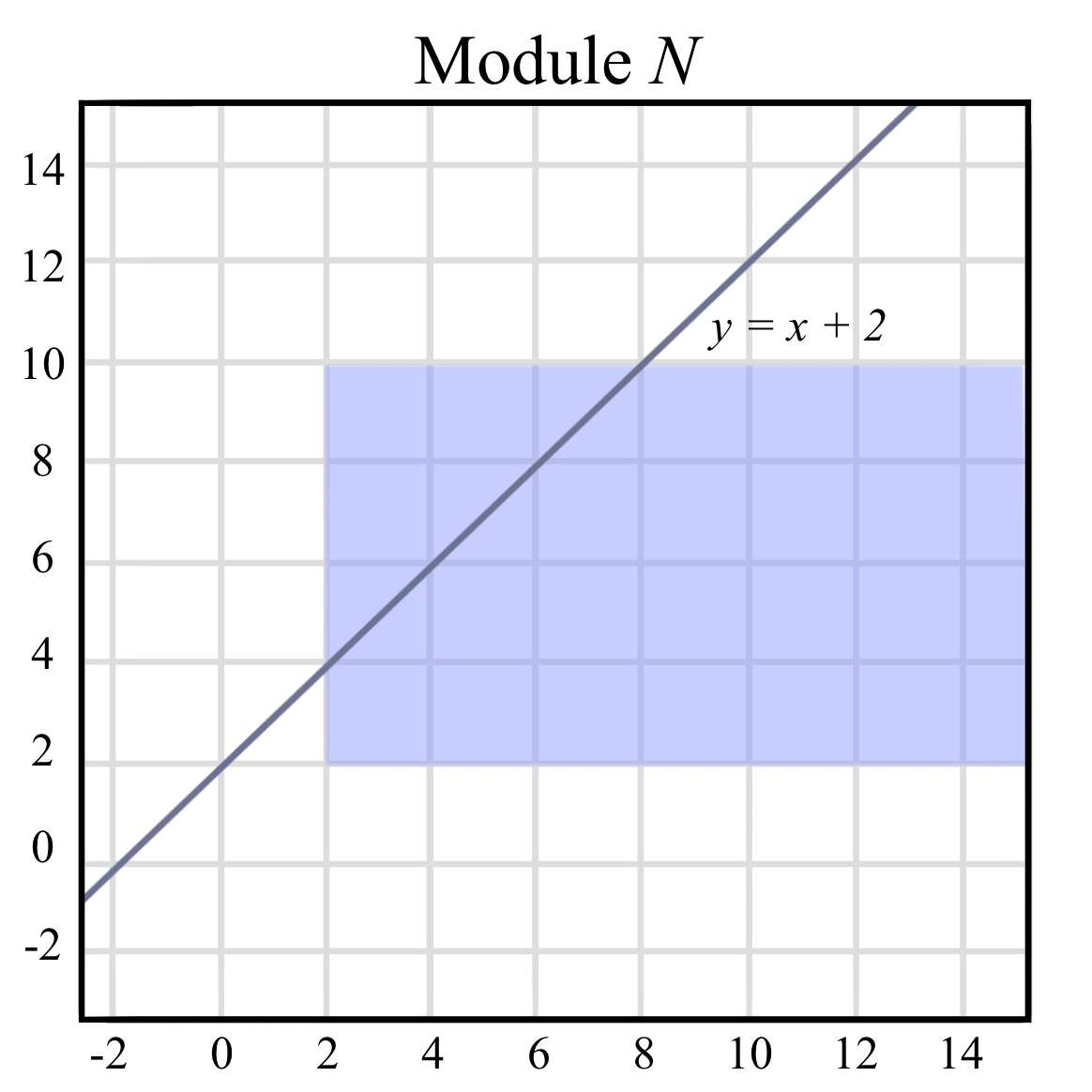}
    \caption{The persistence module considered in \Cref{ex:need-diag}: The matching distance is not attained at any line through two critical values.}
    \label{fig:need-diag}
\end{figure}

For lines $L$ with equation $y=mx + q$ with $m \leq 1$ and $q \leq 7 - 2m$, we get weighted bottleneck distance $3$, thus achieving the matching distance, because the weighted distance is smaller than $3$ for all other lines. In particular, the line through $(2,2)$, point in $C_M \cap C_N$, with diagonal direction achieves the matching distance between $M$ and $N$. 

Symmetrically, if we consider the rectangle persistence modules $M =[2,7)\times [2,\infty) $ and $N = [2,10) \times [2,\infty)$, the situation is reversed, that is, the matching distance is realised by lines of slope greater than or equal to $1$. 

In neither this example nor its symmetric version is it possible to achieve the matching distance by a line with positive slope through two points in $C_M \cup C_N$ or their closure with respect to the least upper bound.  However, in both cases, the matching distance is achieved on a line that passes through one point in $C_M \cup C_N$, with diagonal slope. 
\end{example}

\Cref{ex:diag-not-suff} shows that is does not suffice to consider only diagonal lines. (For a further discussion on the role that diagonal lines play in computing switch points and the matching distance, see \Cref{rem:diaglines}).

\begin{example}\label{ex:diag-not-suff} 
Let $M$ be the rectangle decomposable module given as the direct sum of two rectangle modules, one with support $[0,7) \times [0,7)$, the other one with support $[0,7) \times [4,11)$. Let $N$ be the rectangle decomposable module given as the direct sum of the rectangle modules with support $[0,7) \times [0,11)$ and $[0,7) \times [4,7)$. 

For diagonal lines $L$ with equation $y=x + q$ for $q \leq 0$ or $ q \geq 4$, we have that $M^L = N^L$ so that $d_{B}(M^L, N^L) = 0$. For $0 < q < 4$, a straightforward computation shows that the maximum bottleneck distance achieved is for $q=2$, where the cost is $2$. However, for the line $L'$ through the points $(0,0)$ and $(7,11)$, the bottleneck distance is $d_{B}(M^L, N^L) = 4$ and $\hat m^{L'} = \frac{7}{11}$, so that the matching distance between $M$ and $N$ is at least $\frac{28}{11}$. This example shows that diagonal lines do not suffice.

\begin{figure}[h!]
    \centering
    \includegraphics[width=50mm,scale=0.7]{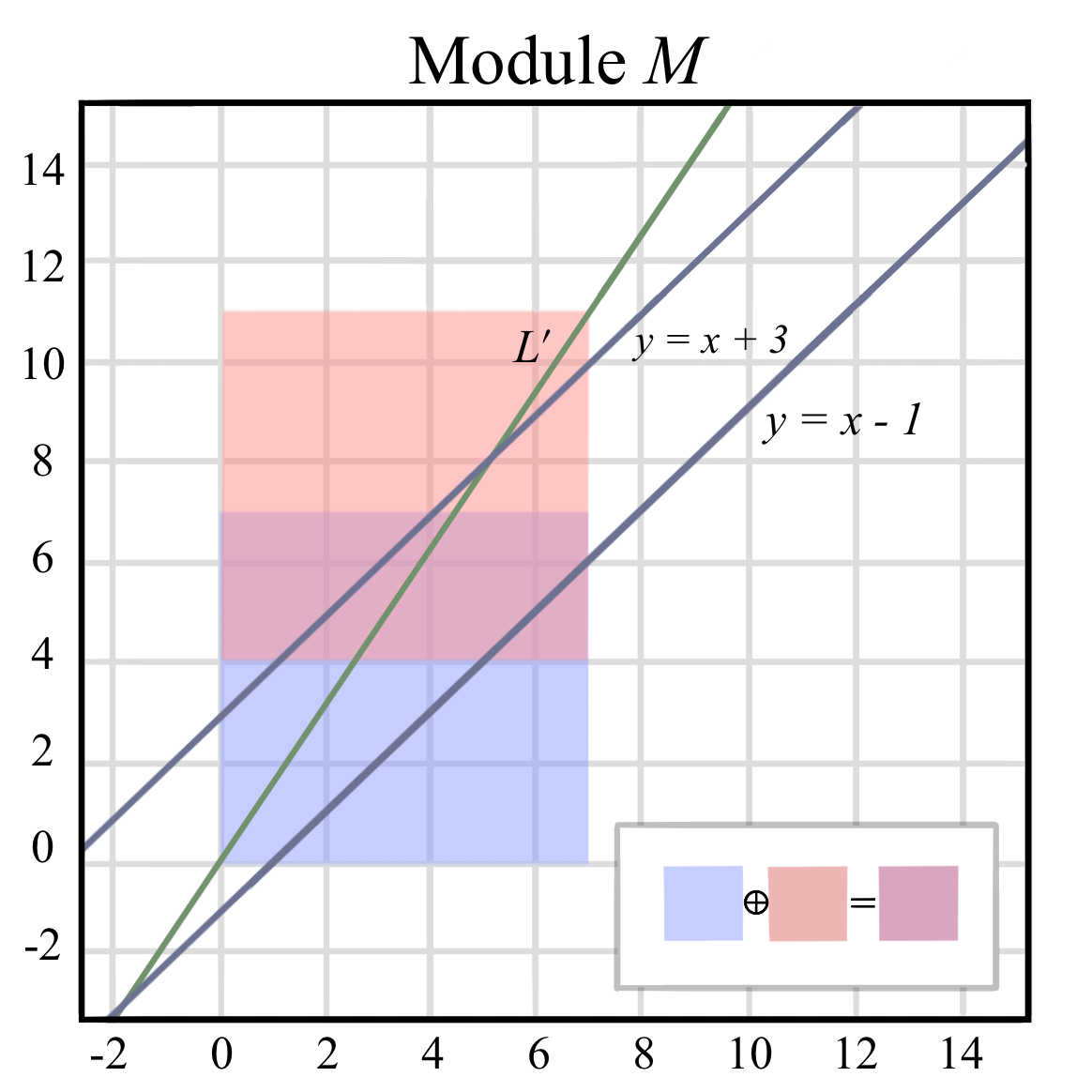}
    \includegraphics[width=50mm,scale=0.7]{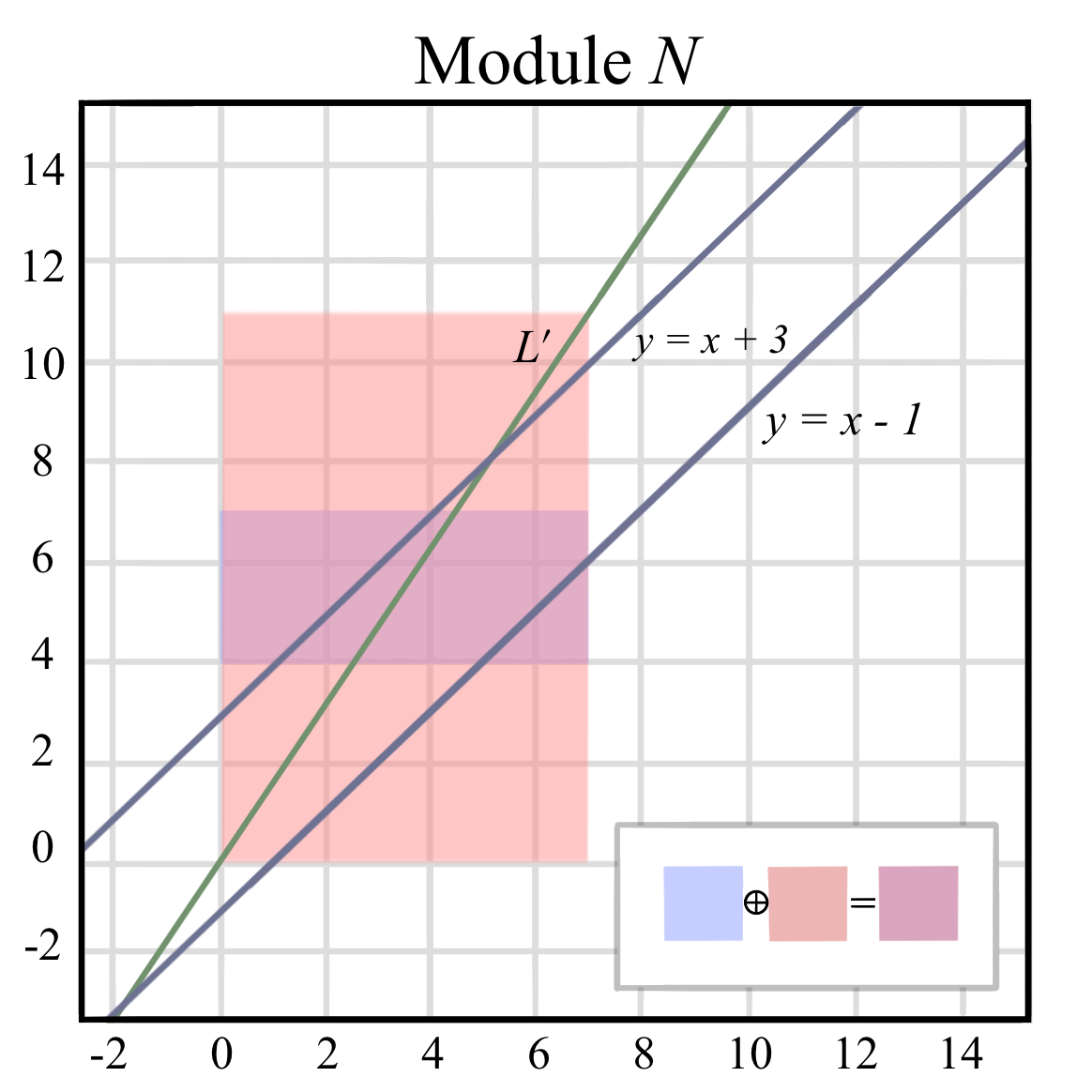}

    \caption{The persistence modules considered in \Cref{ex:diag-not-suff}: The matching distance is not attained at any diagonal line. }
  \label{fig:diag-not-suff}
\end{figure}
\end{example}

The next example, illustrated in \Cref{fig:need-omega1,fig:need-omega2}, shows that not even considering both lines through critical values or their least upper bounds and diagonal lines ensures we achieve the matching distance. This fact  will motivate the introduction of a further set of points, called switch points, in \Cref{sec:switch}.

\begin{example}\label{ex:need-omega} 

Let $M$ be the rectangle decomposable module which is the direct sum of two rectangle modules, one with underlying rectangle $[0,7) \times [0,8)$, the other one with underlying rectangle $[0,7) \times [4,11)$. 
Let $N$ be the rectangle decomposable module sum of two rectangle modules  whose underlying rectangles are $[0,7) \times [4,8)$ and $[0,7) \times [0,11)$. 

\begin{figure}[h!]
    \centering
    \includegraphics[width=50mm,scale=0.7]{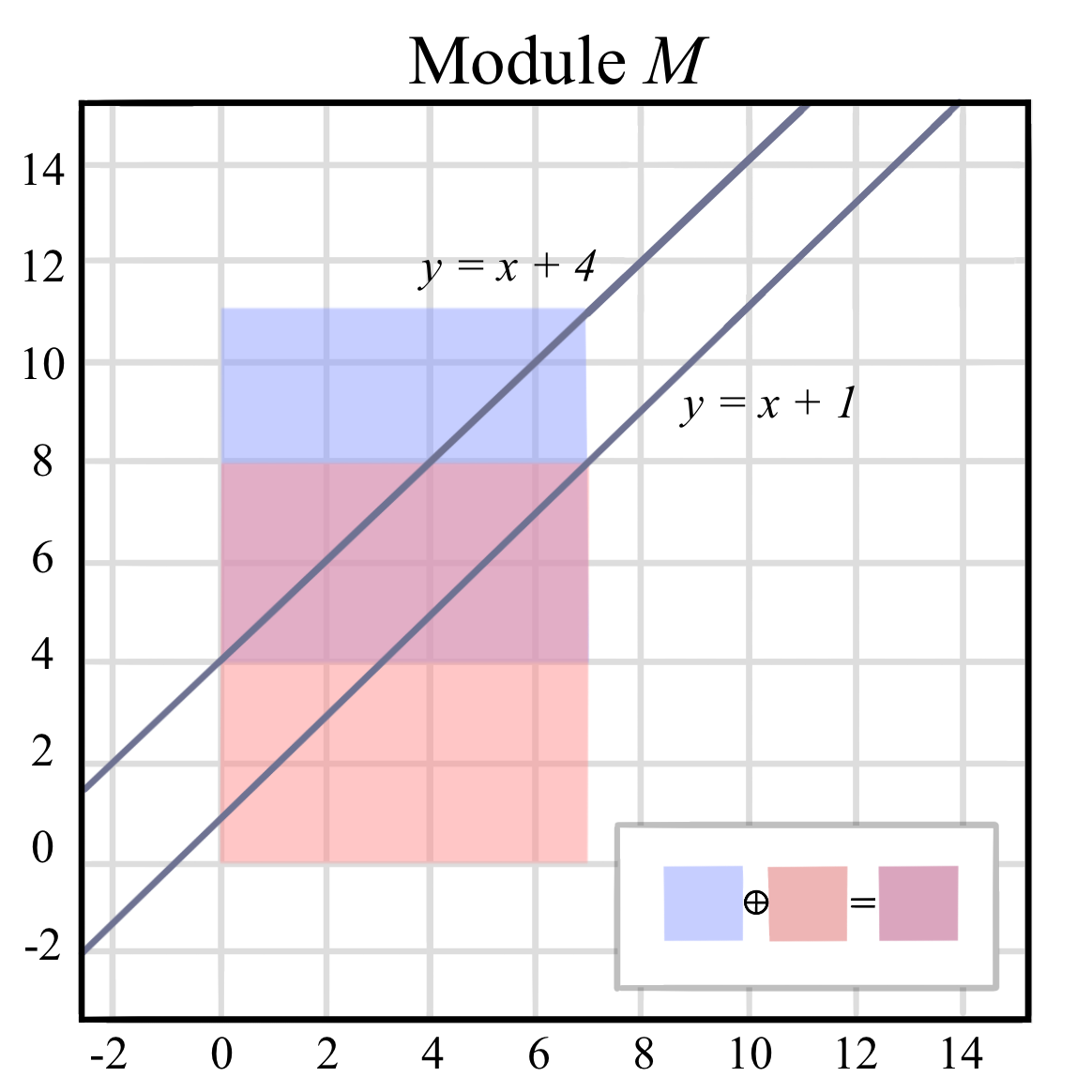}
    \includegraphics[width=50mm,scale=0.7]{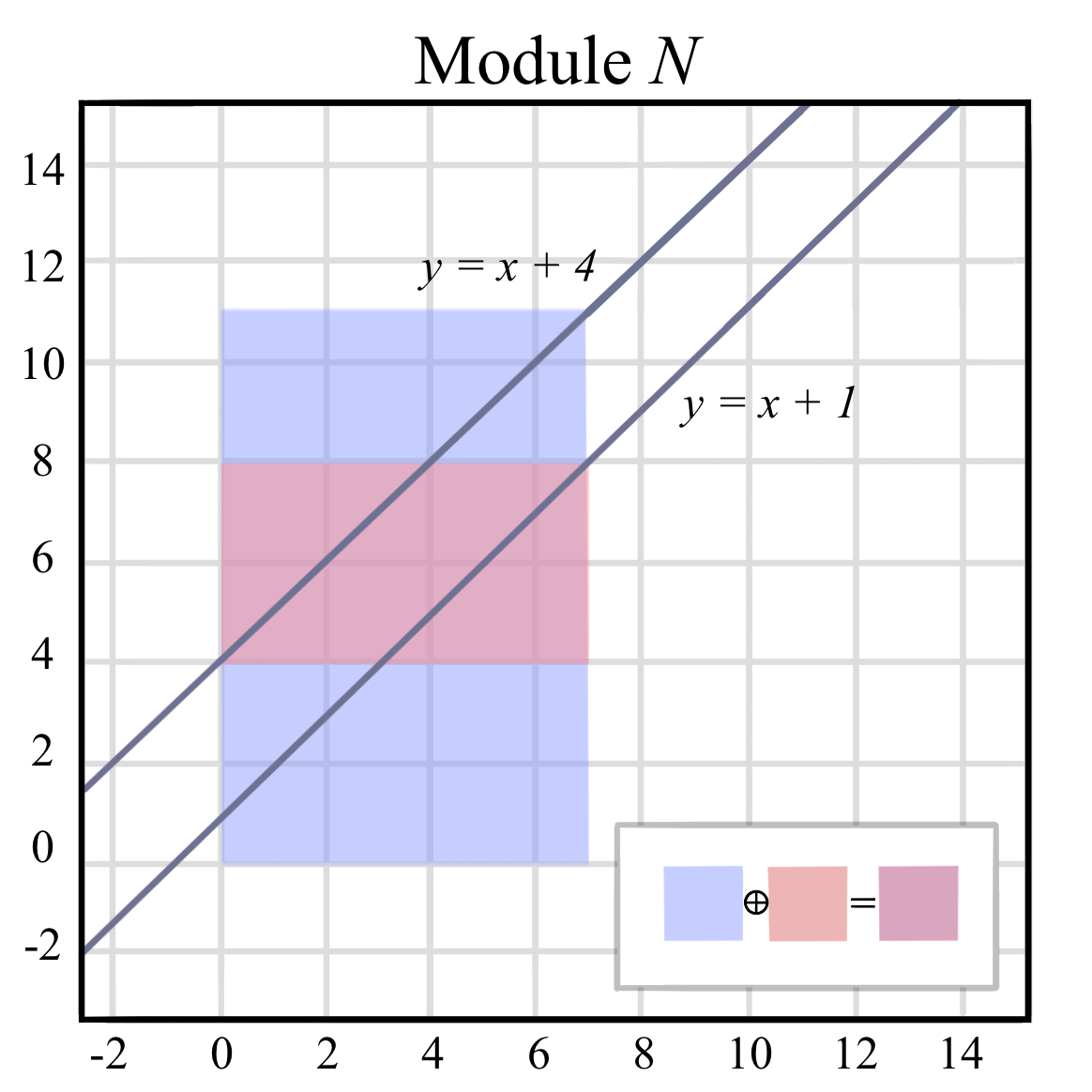}
    \caption{The persistence modules considered in \Cref{ex:need-omega}:  Neither lines through critical values nor diagonal lines suffice to achieve the matching distance.}
    \label{fig:need-omega1}
\end{figure}

A grid search on the line parameters $\theta$ and $b$ suggests that the matching distance is achieved neither along diagonal lines nor along lines through two points in the closure with respect to lowest upper bound of $C_M \cup C_N$, as we find a larger weighted bottleneck distance along a line that is neither diagonal nor through two critical points than along any diagonal or critical-point induced line in the grid search (see \Cref{fig:need-omega2}). 

\begin{figure}
    \centering
    \includegraphics[scale = 0.6]{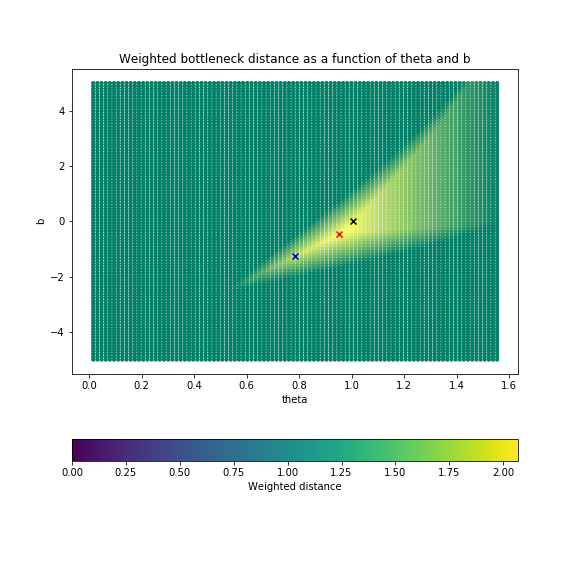}
    \caption{Weighted bottleneck distance for the modules of \Cref{ex:need-omega} across the line parameters. On the horizontal axis we have the values of the angle $\theta$, giving the slope of the line. On the vertical axis we have the parameter $b$ giving the origin of the line. The red cross corresponds to the maximum value over all lines computed in the approximation, the black cross the maximum value for lines through pairs of critical values and the blue cross is the maximum value for diagonal lines. This particular example shows that it is not sufficient to consider only lines through pairs of critical values or diagonal lines. This is because the matching distance must be at least as much as the value computed at the red cross.}

    \label{fig:need-omega2}
\end{figure}

The following considerations confirms this: 
for diagonal lines $L$ with equation $y=x + q$ for $q \leq 1$ or $ q \geq 4$, we have that $M^L = N^L$ so that $d_{B}(M^L, N^L) = 0$. For $1 < q < 4$, straightforward computation shows that the maximum bottleneck distance achieved is for $q=2.5$, where the cost is $\frac{3}{2}$. However, for the line $L'$ through the points $(\frac{7}{2},6)$ and $(7,11)$, the bottleneck distance is $d_{B}(M^L, N^L) = 3$ and $\hat m^{L'} = \frac{7}{10}$, so that the matching distance between $M$ and $N$ is at least $\frac{21}{10}$. Out of the lines through two points in the closure with respect to lowest upper bound of $C_M \cup C_N$, the only line that does not give $M^L = N^L$ is the line through $(0,0)$ and $(7,11)$, which has weighted bottleneck distance $\frac{21}{11}$. This example shows that it does not suffice to consider only diagonal lines and lines through two points in the closure with respect to the lowest upper bound of $C_M \cup C_N$. 
\end{example}


\section{Main result}\label{sec:mainresult}
The goal of this section is to prove the following main result.

\begin{restatable}{theorem}{main}\label{thm:main}
Let $M,N$ be two $2$-parameter persistence modules. If $M,N$ are both trivial, then $d_{match}(M,N)=0$. Otherwise, assume that $M$ and $N$  satisfy Properties (P) and (Q). Let $C_M$ and $C_N$ be finite sets of critical values in $\RR^2$ as ensured by Property (P), and let $\Omega(C_M,C_N)$ be a finite set of points in $\mathbb{P}^2$ as ensured by Property (Q). Then the matching distance between $M$ and $N$ is achieved on a line through two distinct points, one in $\closure C_M\cup \closure C_N \cup \Omega_P$ and the other one in $\closure C_M\cup \closure C_N \cup \closure\Omega$, with 
$\Omega_P=\Omega(C_M,C_N)\cap  \RR^2$, and $\overline{\Omega}=\Omega(C_M,C_N)\cup \{[0:1:1]\}$.
\end{restatable}

In particular, since finitely presented persistence modules satisfy properties (P) and (Q), \Cref{thm:main} holds for such persistence modules.  We note here that this is a statement of existence, not uniqueness:  it is possible that the matching distance is achieved on many other lines in addition to the one guaranteed by \Cref{thm:main}.

We prepare the proof of this theorem (given in \Cref{sec:proof-main}) by showing some preliminary claims.  In \Cref{sec:claims}, \Cref{lem:translations} states that, for a line with positive slope, there is a direction of translation within its $\sim_{\closure C_M\cup \closure C_N \cup \closure\Omega}$ equivalence class which does not decrease the cost of matchings.  \Cref{lem:rotations} states a similar result for rotations about a point of the line.  \Cref{sec:VandHlines} discusses the role that vertical and horizontal lines play in computing the matching distance. In particular, they are not needed in the set of lines we use to compute the matching distance.

\subsection{Preliminary claims}\label{sec:claims}

In this section, we fix  a line $L$ with positive slope, we let $P \subseteq \dgm M^L$ and $Q \subseteq \dgm N^L$, and we take $\sigma \colon P \rightarrow Q$ to be a matching such that the pair $u,v\in\overline{C}_M\cup\overline{C}_N$ achieves the cost of $\sigma$:
${\rm cost}(\sigma) = \hat m^L\frac{|p_L(u)-p_L(v)|}{\delta},$
with 
\[
\delta = 
\begin{cases}
     1,& \text{if ${\rm cost}(\sigma)$ is achieved by a matched pair, } \\
    2,              & \text{if ${\rm cost}(\sigma)$ is achieved by an unmatched point .}
\end{cases}
\]
 Also, for any other line $L'$ in the same equivalence class as $L$, we simply write $\Gamma$ for  the bijection $\Gamma_{L, L'}$  between matchings along $L$ and matchings along $L'$ from \Cref{lem:Gamma}.

As we are assuming that $M,N$ satisfy Property (Q), for $L$ equivalent to $L'$ via $\sim_{\closure C_M\cup \closure C_N\cup\overline{\Omega}}$, $\Gamma$ gives a bijection between the matchings of $L$ to those of $L'$ and, if $u,v$ achieve the cost of $\sigma$, then $u,v$ also achieve the cost of $\Gamma(\sigma)$.

In this situation, the following lemmas tell us how to move a line around in its equivalence class with respect to $\closure C_M\cup \closure C_N \cup 
\closure\Omega$ in order to increase (or at least not decrease) the cost of a matching.

We first analyze what happens when we translate a line.

\begin{lemma}\label{lem:translations-cost}
    Let $L\subseteq \RR^2$ be a line with positive slope and let $\sigma \colon \dgm M^L \to \dgm N^L$ be a matching such that $c(\sigma)=\frac{|p_L(u)-p_L(v)|}{\delta}$, with $u,v\in {\closure C_M\cup \closure C_N\cup\Omega_p}$ and $\delta\in\{1,2\}$. Then, for any other line $L'\sim_{\closure C_M\cup \closure C_N\cup\closure\Omega}L$, obtained by translating $L$, the cost of $\Gamma(\sigma)$ is given by the following formulas:
    \begin{enumerate}
       \item $\cost(\Gamma(\sigma))=\cost(\sigma)=\hat m^L\frac{|p_L(u)-p_L(v)|}{\delta}$, if $A^L_u\subseteq A^{L}_v$ or $A^L_v\subseteq A^{L}_u$, 
       \item $\cost(\Gamma(\sigma)) = \hat m^L\frac{|p_L(u) - p_L(v) + (b_1 - b_1')(1/m_1 + 1/m_2)|}{\delta}$,
 otherwise.
   \end{enumerate}
 \end{lemma}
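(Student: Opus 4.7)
The plan is to verify both formulas by direct calculation, using the explicit parametrization formulas \eqref{eq:param-1}--\eqref{eq:param-3} for $p_L(u)$ and $p_L(v)$, and exploiting the fact that a translation preserves the direction vector, so that $\vec{m}' = \vec{m}$, and hence $\hat{m}^{L'} = \hat{m}^L$. Moreover, since $L \sim_{\closure C_M\cup \closure C_N\cup\Omega} L'$ and $u,v \in \closure C_M\cup \closure C_N\cup \Omega$, we have $A^L_u = A^{L'}_u$ and $A^L_v = A^{L'}_v$, so both points lie on the same face of their positive cones relative to $L$ and $L'$. By Property (Q), applied via \Cref{prop:Q}, the pair $u,v$ continues to realize the cost of $\Gamma(\sigma)$, so it suffices to compute $|p_{L'}(u) - p_{L'}(v)|$ and compare it with $|p_L(u) - p_L(v)|$.

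First I would handle case (1), where $A^L_u \subseteq A^L_v$ or $A^L_v \subseteq A^L_u$. An enumeration of the possibilities ($\{1\}\subseteq\{1\}$, $\{2\}\subseteq\{2\}$, $\{1\}\subseteq\{1,2\}$, $\{2\}\subseteq\{1,2\}$, $\{1,2\}\subseteq\{1,2\}$ and the symmetric ones) shows that in each subcase there is a common index $i \in \{1,2\}$ such that both $p_L(u)$ and $p_L(v)$ may be computed from the same formula, either \eqref{eq:param-1} or \eqref{eq:param-2}, involving the coordinates $m_i$ and $b_i$. The subtraction then cancels $b_i$, giving
\[p_L(u) - p_L(v) = \frac{u_i - v_i}{m_i},\]
which is independent of $b$. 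Applying the same reasoning on $L'$ with the same $i$ (justified by $A^L_\bullet = A^{L'}_\bullet$), and using $\vec{m} = \vec{m}'$, yields $p_{L'}(u) - p_{L'}(v) = p_L(u) - p_L(v)$, proving that $\cost(\Gamma(\sigma)) = \cost(\sigma)$.

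Next I would treat case (2), where neither inclusion holds. This forces $\{A^L_u, A^L_v\} = \{\{1\},\{2\}\}$, and without loss of generality we may assume $A^L_u = \{1\}$ and $A^L_v = \{2\}$. Then \eqref{eq:param-2} gives $p_L(u) = (u_1 - b_1)/m_1$ and \eqref{eq:param-1} gives $p_L(v) = (v_2 - b_2)/m_2$, with the analogous formulas on $L'$ using $b'_1, b'_2$ in place of $b_1, b_2$. Subtracting, we obtain
\[p_{L'}(u) - p_{L'}(v) - \bigl(p_L(u) - p_L(v)\bigr) = \frac{b_1 - b'_1}{m_1} + \frac{b'_2 - b_2}{m_2}.\]
The standard normalization \eqref{eq:normalization} gives $b_1 + b_2 = 0 = b'_1 + b'_2$, hence $b'_2 - b_2 = b_1 - b'_1$, and the right-hand side collapses to $(b_1 - b'_1)(1/m_1 + 1/m_2)$. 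Substituting this into $\cost(\Gamma(\sigma)) = \hat{m}^L \frac{|p_{L'}(u) - p_{L'}(v)|}{\delta}$ yields exactly formula (2).

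The main obstacle is really just bookkeeping: keeping the case split clean, confirming that in every subcase of (1) the two pushes admit a common coordinate expression, and using the normalization condition at the right moment in case (2). No genuinely difficult step arises, since the equivalence $L \sim L'$ prevents $A^L_u$ or $A^L_v$ from changing, and Property (Q) guarantees that the pair $(u,v)$ still witnesses the cost of $\Gamma(\sigma)$; the rest is algebraic simplification.
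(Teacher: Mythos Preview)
Your proposal is correct and follows essentially the same approach as the paper's proof: both use that translation preserves $\vec{m}$ (hence $\hat m^L=\hat m^{L'}$), invoke Property~(Q) to ensure the same pair $u,v$ realises $c(\Gamma(\sigma))$, and then split into the two cases according to whether $u$ and $v$ push in a common direction or in opposite directions, using the normalization $b_1+b_2=0$ in the second case. The only cosmetic difference is that the paper disposes of the subcase $A^L_u=\{1,2\}$ or $A^L_v=\{1,2\}$ separately (observing that then $L=L'$), whereas you fold it into your enumeration; both treatments are valid.
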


  \begin{proof}
  Suppose that $L$ is parameterized as $\vec{m}s + b$ and $L'$ as $\vec{m'}s' + b'$.  Let $L'$ be a line that is parallel to and in the same reciprocal position as $L$ with respect to $\closure C_M\cup \closure C_N \cup \closure\Omega$.   
    Since $L'$ is parallel to $L$, we have $\vec{m} = \vec{m'}$, and in particular, $\hat{m}^L = \hat{m}^{L'}$.
   
  As a consequence of Property (Q), we see that
    \[c(\Gamma(\sigma)) = \frac{|p_{L'}(u) - p_{L'}(v)|}{\delta}.\]

                If either $u$ or $v$ lies on $L$, meaning $A^L_u$ or $A^L_v=\{1,2\}$, then $L'\sim_{\closure C_M\cup \closure C_N\cup\closure\Omega}L$ and $L'$ parallel to $L$ implies that $L=L'$, and therefore $c(\Gamma(\sigma))=c(\sigma)$.  Otherwise, we have that both $A^L_u$ and $A^L_v$ are strictly contained in $\{1,2\}$. We now have two cases.

 \begin{enumerate}
    \item First suppose that $L$ (and hence $L'$) intersects the same face of the positive cone of $u$ and of $v$, that is $A^L_u\subseteq A^{L}_v$ or $A^L_v\subseteq A^{L}_u$. 
    
      By \Cref{eq:s'}, this means that there is some $i\in\{1,2\}$ such that:
      \begin{align*}
        p_{L'}(u) &= \frac{m_i}{m'_i} p_L(u) + \frac{b_i - b_i'}{m_i'} = p_L(u) + \frac{b_i - b_i'}{m_i}, \text{ and}\\
        p_{L'}(v) &= \frac{m_i}{m'_i} p_L(v) + \frac{b_i - b_i'}{m_i'} = p_L(v) + \frac{b_i - b_i'}{m_i}.
      \end{align*}
      We can then compute directly:
      \begin{align*}
        c(\Gamma(\sigma)) &= \frac{|p_{L'}(u) - p_{L'}(v)|}{\delta}\\
                          &= \frac{|p_L(u) + (b_i - b_i')/m_i - p_L(v) - (b_i - b_i')/m_i|}{\delta}\\
                          &=\frac{|p_L(u) - p_L(v)|}{\delta}\\
        &= c(\sigma).
      \end{align*}

    \item Now suppose that $L$ (and hence also $L'$) intersects different  faces of the positive cones of the points $u$ and $v$.
      Without loss of generality, assume that $u$ lies strictly to the right of both $L$ and $L'$, and that $v$ lies strictly to the left. 
      
      This means that
      \begin{align*}
        p_{L'}(u) &= p_L(u) + \frac{b_1 - b_1'}{m_1},\\
        p_{L'}(v) &= p_L(v) + \frac{b_2 - b_2'}{m_2}.
      \end{align*}
      Recall that because of our standard normalization  (see \Cref{eq:normalization}), we have $b_1 = -b_2$ and $b_1' = - b_2'$.
      So we have
      \begin{align*}
        c(\Gamma(\sigma)) &= \frac{|p_{L'}(u) - p_{L'}(v)|}{\delta}\\
                          &= \frac{|p_L(u) + (b_1 - b_1')/m_1 - p_L(v) - (b_2 - b_2')/m_2|}{\delta}\\
        &=\frac{|p_L(u) - p_L(v) + (b_1 - b_1')(1/m_1 + 1/m_2)|}{\delta}.
      \end{align*}
    \end{enumerate}
  \end{proof}

  \begin{lemma}\label{lem:translations}
   Let $L\subseteq \RR^2$ be a line with positive slope not intersecting $\closure C_M\cup \closure C_N \cup \Omega_P$. For each matching $\sigma \colon \dgm M^L \to \dgm N^L$, there exists a direction of translation such that if $L'$ is a line obtained by translating $L$ in that direction and $L\sim_{\closure C_M\cup \closure C_N\cup\overline{\Omega}} L'$, then
      $$\cost(\Gamma(\sigma))\geq\cost(\sigma).$$
 \end{lemma}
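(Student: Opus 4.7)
The plan is to apply \Cref{lem:translations-cost} to the pair $u,v\in \closure{C}_M\cup\closure{C}_N$ realising $c(\sigma)=|p_L(u)-p_L(v)|/\delta$. Before splitting into cases, I would record two useful preliminary observations. First, because $L$ avoids every proper point of $\closure{C}_M\cup\closure{C}_N\cup\Omega_P$ (in particular, $u$ and $v$), each such point is at positive perpendicular distance from $L$, so $u$ and $v$ lie strictly on one side of $L$, i.e.\ $A^L_u, A^L_v\in\{\{1\},\{2\}\}$. Second, translation preserves slope, so reciprocal positions with respect to the points of $\overline{\Omega}_\infty$ are automatically preserved. Consequently, the family of parallel lines that are $\sim_{\closure{C}_M\cup\closure{C}_N\cup\overline{\Omega}}$-equivalent to $L$ forms an open interval $\mathcal{I}$ (in the translation parameter) containing $L$ in its interior, extending in \emph{both} translation directions.

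Next I would handle the two subcases. If $A^L_u = A^L_v$, then by \Cref{lem:translations-cost}(1), every $L'\in \mathcal{I}$ satisfies $\cost(\Gamma(\sigma))=\cost(\sigma)$, so any direction of translation works. Otherwise $u$ and $v$ lie strictly on opposite sides of $L$ and \Cref{lem:translations-cost}(2) gives
\[\cost(\Gamma(\sigma)) = \hat{m}^L\,\frac{\bigl|\,p_L(u)-p_L(v)+(b_1-b_1')(1/m_1+1/m_2)\,\bigr|}{\delta}.\]
Since $1/m_1+1/m_2>0$, I would select the translation direction so that $b_1-b_1'$ carries the same sign as $p_L(u)-p_L(v)$ (either direction suffices if $p_L(u)=p_L(v)$, which forces $\cost(\sigma)=0$). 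With this choice, the quantity inside the absolute value has larger magnitude than $|p_L(u)-p_L(v)|$, hence $\cost(\Gamma(\sigma))\geq \cost(\sigma)$ for every $L'\in\mathcal{I}$ translated from $L$ in the chosen direction.

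The main obstacle, though largely technical, is ensuring the direction dictated by Case~2 is actually usable: one needs that translation in that direction does not immediately leave the equivalence class. This is exactly where the hypothesis that $L$ misses every point of $\closure{C}_M\cup\closure{C}_N\cup\Omega_P$ is used, as it guarantees that $\mathcal{I}$ is open around $L$ and hence admits nontrivial translations on both sides; were $L$ to pass through some such point, the argument of Case~1 of \Cref{lem:translations-cost} would force $\mathcal{I}=\{L\}$, and no translation could be made. A short verification that the formula of Case~2 is monotone in $|b_1-b_1'|$ on each side of zero completes the argument.
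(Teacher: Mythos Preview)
Your argument is correct and follows essentially the same route as the paper: both reduce to \Cref{lem:translations-cost}, treat the same-side case as cost-preserving, and in the opposite-side case choose the sign of $b_1-b_1'$ to match that of $p_L(u)-p_L(v)$. Your extra remarks about the openness of the equivalence interval $\mathcal{I}$ are fine (though not strictly needed for the lemma as stated, since the conclusion is conditional on $L'\sim L$), while the paper's proof additionally notes that one eventually hits a proper point of $\closure C_M\cup\closure C_N\cup\Omega_P$, an observation used only in the subsequent remark rather than in the lemma itself.
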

  
  \begin{proof}
With reference to the notations and cases analyzed in \Cref{lem:translations-cost}, we now have two cases.
    \begin{enumerate}
    \item First suppose that $L$ (and hence $L'$) intersects the same face of the positive cone of $u$ and of $v$.
       We  then know that $\cost(\Gamma(\sigma)) = \cost(\sigma)$.
      This shows that in this case, translating $L$ in either of the two directions keeps the cost unchanged.
      Since we are assuming that $\closure C_M\cup \closure C_N \cup \Omega_p$ contains at least the proper points $u$ and $v$, at least one of the two directions of translation guarantees hitting a proper point in $\closure C_M\cup \closure C_N\cup \Omega_P$.
      
    \item Now suppose that $L$ (and hence also $L'$) intersects different positive faces of the points $u$ and $v$ so that
     $\cost(\Gamma(\sigma)) = \hat m^L\frac{|p_L(u) - p_L(v) + (b_1 - b_1')(1/m_1 + 1/m_2)|}{\delta}$.
      Note that $m_1, m_2 > 0$.
      If $p_L(u) \geq p_L(v)$, then having $b_1 > b_1'$ would imply that $\cost(\Gamma(\sigma)) \geq \cost(\sigma)$.
      If $p_L(v) \geq p_L(u)$, then having $b_1' > b_1$ would imply that $\cost(\Gamma(\sigma)) \geq \cost(\sigma)$.

      Note that $b_1 > b_1'$ means that $L'$ is a left translate of $L$, and $b_1' > b_1$ means that $L$ is a right translate of $L$.
      In either case, there is a direction of translation in which the cost does not decrease.
      We may therefore translate $L$, increasing $c(\sigma)$, until $L$ hits a point in $\closure C_M\cup \closure C_N \cup \Omega_p$.
      Since $u$ and $v$ lie on opposite sides of $L$, we are guaranteed to hit a proper point in $\closure C_M\cup \closure C_N\cup \Omega_P$.
    \end{enumerate}
  \end{proof}

\Cref{lem:translations-cost} is used in the following way. 
 
\begin{remark}
          For each line $L$ with positive slope that does not intersect any point of $\closure C_M\cup \closure C_N\cup \Omega_P$, there is a direction of translation such that

          \begin{itemize}
       \item translating $L$ in that direction does not decrease the weighted cost $\cost(\sigma)$, 
       \item during the translation the line remains in the same equivalence class as $L$ with respect to $\closure C_M\cup \closure C_N\cup \overline{\Omega}$, until
       \end{itemize}
     
\begin{itemize}
       \item eventually, the translated line hits a proper point in $\closure C_M\cup \closure C_N \cup \Omega_P\subseteq \RR^2$.
   \end{itemize}
\end{remark} 

Let us now study what happens when we rotate a line.

\begin{lemma}\label{lem:rotations-cost}
    Let $L$ be a line through a point $r \in \RR^2$ with direction $\vec{m}=(m_1,m_2)$ (with $m_1,m_2 >0$), and let $\sigma \colon \dgm M^L \to \dgm N^L$ be a matching such that $c(\sigma)=\frac{|p_L(u)-p_L(v)|}{\delta}$, with $u,v\in {C_M}\cup C_N$ and $\delta\in\{1,2\}$. Then, for any other line $L'\sim_{\closure C_M\cup \closure C_N\cup\overline{\Omega}}L$, obtained by rotating $L$ about $r$, the cost of $\Gamma(\sigma)$ is given by the following formulas:
    \begin{enumerate}
       \item $\cost(\Gamma(\sigma)) = \frac{\hat{m}^{L'}}{\hat{m}^L}\cdot \cost(\sigma),$ if both $A^L_u$ and $A^L_v$  contain ${\argmax}_{i\in \{1,2\}}\{m_i\}$. 
       
       \item $\cost(\Gamma(\sigma)) = \cost(\sigma)$ if both $A^L_u$ and $A^L_v$ contain ${\argmin}_{i\in \{1,2\}}\{m_i\}$. 
       
       \item $\cost(\Gamma(\sigma)) = \frac{1}{\delta} \cdot \left(\hat{m}^{L'}(v_2 - r_2) + r_1- u_1 \right),$ if $A^L_u={\argmin}_{i\in \{1,2\}}\{m_i\}$, $A^L_v = {\argmax}_{i\in \{1,2\}}\{m_i\}$ and $p_{L'}(u) - p_{L'}(v) \leq 0$.
        
    \item $\cost(\Gamma(\sigma)) = \frac{1}{\delta}\cdot \left(\hat{m}^{L'}(r_2 - v_2) +  u_1 - r_1 \right),$ if $A^L_u={\argmin}_{i\in \{1,2\}}\{m_i\}$, $A^L_v = {\argmax}_{i\in \{1,2\}}\{m_i\}$ and $p_{L'}(u) - p_{L'}(v) \geq 0$.
      
   \end{enumerate}
 \end{lemma}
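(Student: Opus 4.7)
The plan is to prove each of the four formulas by direct computation, using the explicit push parameterisations \eqref{eq:param-1}--\eqref{eq:param-3} and exploiting that both $L$ and $L'$ contain the common point $r$. First I will set up common notation: parameterise $L$ as $\vec m s + b$ and $L'$ as $\vec m' s + b'$ in standard normalisation, and let $s_r, s_r'$ be the parameter values with $r = \vec m s_r + b = \vec m' s_r' + b'$, so that $b_i = r_i - m_i s_r$ and $b_i' = r_i - m_i' s_r'$. This substitution is the key move: it eliminates the intercepts in favour of the common pivot $r$ and makes the $s_r$-contributions cancel in the mixed cases. The hypothesis $L\sim_{\closure C_M\cup \closure C_N\cup\overline\Omega}L'$ does two independent things that I will use. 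First, because $\overline\Omega$ contains $[0{:}1{:}1]$, the reciprocal positions of both lines with respect to the diagonal coincide, which forces $\argmax_i m_i = \argmax_i m_i'$ and $\argmin_i m_i = \argmin_i m_i'$; in particular $\hat m^{L'} = m_\iota'$ for the same index $\iota = \argmin$. Second, it preserves $A^L_u$ and $A^L_v$, and by property (Q') (implied by (Q) via \Cref{prop:Q}) the same pair $u, v$ realises the cost of $\Gamma(\sigma)$, so $c(\Gamma(\sigma)) = |p_{L'}(u) - p_{L'}(v)|/\delta$.

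For the two \emph{aligned} cases, when $A^L_u$ and $A^L_v$ both contain $k = \argmax_i m_i$ (so $m_k = m_k' = 1$), both pushes are computed via the $k$-th coordinate by \eqref{eq:param-1}--\eqref{eq:param-3}, so $|p_L(u) - p_L(v)| = |u_k - v_k|$ and $\cost(\sigma) = \hat m^L |u_k - v_k|/\delta$. The identical computation on $L'$ yields $\cost(\Gamma(\sigma)) = \hat m^{L'} |u_k - v_k|/\delta$, which is the ratio formula of case 1. Symmetrically, when both contain $k = \argmin$, both pushes use $m_k = \hat m^L$, so $|p_L(u) - p_L(v)| = |u_k - v_k|/\hat m^L$ and the weight cancels, giving $\cost(\sigma) = |u_k - v_k|/\delta$ which is line-independent inside the equivalence class; this is case 2.

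For the \emph{mixed} cases, without loss of generality take $\argmax = 2$ (so $m_2 = m_2' = 1$, $\hat m^L = m_1$, $\hat m^{L'} = m_1'$) and $A^L_u = \{1\}$, $A^L_v = \{2\}$; the opposite assignment of $u, v$ is symmetric. Substituting $b_1 = r_1 - m_1 s_r$ into \eqref{eq:param-2} gives $p_L(u) = (u_1 - r_1)/m_1 + s_r$, and substituting $b_2 = r_2 - s_r$ into \eqref{eq:param-1} gives $p_L(v) = (v_2 - r_2) + s_r$; the offsets cancel in the difference
\[ p_{L'}(u) - p_{L'}(v) = (u_1 - r_1)/m_1' - (v_2 - r_2). \]
Splitting the absolute value by the sign of $p_{L'}(u) - p_{L'}(v)$ and multiplying through by $\hat m^{L'} = m_1'$ collapses the fraction and reproduces $\tfrac{1}{\delta}(\hat m^{L'}(v_2 - r_2) + r_1 - u_1)$ in case 3 and $\tfrac{1}{\delta}(\hat m^{L'}(r_2 - v_2) + u_1 - r_1)$ in case 4.

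The main obstacle is not conceptual but rather careful bookkeeping: one has to keep straight which coordinate axis carries each push in each case, enforce the standard-normalisation constraints while translating between $b$ and $b'$, and handle the sign of the difference correctly in the mixed cases. Once the substitution $b_i = r_i - m_i s_r$ has been made and the invariance of $\argmax/\argmin$ across the equivalence class has been extracted from the diagonal point in $\overline\Omega$, each of the four cases collapses to essentially a one-line computation.
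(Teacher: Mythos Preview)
Your proposal is correct and follows essentially the same case-by-case direct computation as the paper: assume without loss of generality that $\argmax_i m_i = 2$, use the push formulas \eqref{eq:param-1}--\eqref{eq:param-3} to evaluate $|p_{L'}(u)-p_{L'}(v)|$ in each configuration, and multiply by $\hat m^{L'}$. The only cosmetic difference is that in the mixed cases the paper computes $\push_{L'}(u)_2$ geometrically via the slope of $L'$ through $r$ and then invokes \eqref{eq:param-diff}, whereas you eliminate the intercepts algebraically through the substitution $b_i = r_i - m_i s_r$; your explicit remark that $[0{:}1{:}1]\in\overline\Omega$ forces $\argmin_i m_i = \argmin_i m_i'$ is a point the paper leaves implicit.
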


\begin{proof}
Assume, without loss of generality, that $L$ has slope greater than $1$. That is, $\hat m^{L}=m_1<m_2=1$, so that ${\argmax}_{i\in \{1,2\}}\{m_i\} = 2$ and ${\argmin}_{i\in \{1,2\}}\{m_i\} = 1$. 
\begin{enumerate}
        \item Suppose that $A^L_u$ and $A^L_v$ both contain $2$.
          Then
          \[|p_L(u) - p_L(v)| = |u_2 - v_2| = |p_{L'}(u) - p_{L'}(v)|.\]
          Therefore,
          \[\cost(\Gamma(\sigma)) = \hat{m}^{L'} \frac{|p_L(u) - p_L(v)|}{\delta} = \frac{\hat{m}^{L'}}{\hat{m}^L}\cdot \cost(\sigma).\]

        \item Next suppose that both $A^L_u$ and $A^L_v$ contain $1$.
          Then
          \[|p_{L'}(v) - p_{L'}(u)| = \frac{|v_1 - u_1|}{m_1^{L'}}.\]
          So we have that
          \[\cost(\Gamma(\sigma)) = \hat{m}^{L'}\cdot \frac{|p_{L'}(v) - p_{L'}(u)|}{\delta} = \frac{1}{\delta}\cdot m_1^{L'} \cdot \frac{|v_1 - u_1|}{m_1^{L'}} = \frac{|v_1 - u_1|}{\delta} = \cost(\sigma).\]
          
      \item Assume that $A^L_u = \{1\}$ and $A^L_v = \{2\}$.
        That is, assume that $u$ lies strictly to the right of the line $L$ and that $v$ lies strictly to the left of the line $L$.
        Let $L'$ be another line obtained by rotating $L$ around $r$ without hitting any point in $\closure C_M\cup \closure C_N \cup \overline{\Omega}$.
        We have $A^{L'}_u = A^L_u$ and $A^{L'}_v = A^L_v$.
        Moreover,
        \[\frac{\push_{L'}(u)_2 - r_2}{\push_{L'}(u)_1 - r_1} = \frac{1}{\hat{m}^{L'}} = \frac{1}{m_1^{L'}}.\]
        Thus,
        \[\push_{L'}(u)_2 = \frac{\push_{L'}(u)_1 - r_1}{m_1^{L'}} + r_2 = \frac{u_1 - r_1}{m_1^{L'}} + r_2.\]
        By \Cref{eq:param-diff}, we therefore see that
        \begin{align*}
          p_{L'}(v) - p_{L'}(u) &= \push_{L'}(v)_2 - \push_{L'}(u)_2\\
                                &= v_2 - \left( \frac{u_1 - r_1}{m_1^{L'}} + r_2 \right)\\
          &= v_2 - r_2 + \frac{r_1 - u_1}{m_1^{L'}}.
        \end{align*}
        Hence, if $p_{L'}(v) - p_{L'}(u) \geq 0$, the cost of $\Gamma(\sigma)$ for $L'$ is
        \begin{align*}
          \cost(\Gamma(\sigma)) &= \hat{m}^{L'}\cdot \frac{p_{L'}(v) - p_{L'}(u)}{\delta}\\
                                &=\frac{1}{\delta}\cdot m_1^{L'}\cdot \left(v_2 - r_2 + \frac{r_1- u_1}{m_1^{L'}}\right)\\
          &= \frac{1}{\delta} \cdot \left(m_1^{L'}(v_2 - r_2) + r_1- u_1 \right).
        \end{align*}
        
        \item Similarly, if $p_{L'}(u) - p_{L'}(v) \geq 0$, then the cost of $\Gamma(\sigma))$ for $L'$ is
        \begin{align*}
          \cost(\Gamma(\sigma)) = \frac{1}{\delta}\cdot \left(m_1^{L'}(r_2 - v_2) +  u_1 - r_1 \right) \ .
          \end{align*}
\end{enumerate}

\end{proof}


\begin{lemma}\label{lem:rotations}
      Let $L\subseteq \RR^2$ be a line with positive slope and let $r$ be any proper point on $L$. 
      If $L$ does not pass through any point of $\closure C_M\cup \closure C_N\cup \overline{\Omega}$, apart from possibly the point $r$ itself, then, for each matching $\sigma \colon \dgm M^L \to \dgm N^L$, there exists a direction of rotation such that if $L'$ is a line obtained by rotating $L$ around $r$ in this direction and $L\sim_{\closure C_M\cup \closure C_N\cup\overline{\Omega}} L'$, then 
      $$\cost(\Gamma(\sigma))\geq\cost(\sigma).$$
    \end{lemma}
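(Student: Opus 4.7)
The plan is to apply \Cref{lem:rotations-cost} directly and perform a case analysis. Without loss of generality, assume $L$ has slope at least $1$, so $\hat m^L = m_1 \le m_2 = 1$, $\argmin_{i\in\{1,2\}} m_i = 1$, and $\argmax_{i\in\{1,2\}} m_i = 2$. Since $L$ avoids the finite set $\closure C_M \cup \closure C_N \cup \overline{\Omega}$ apart from possibly $r$, an open interval of slopes of $L'$ obtained by rotating $L$ about $r$ remains in the equivalence class. Throughout this interval, the subsets $A^{L'}_u, A^{L'}_v \subseteq \{1,2\}$ stay constant for any $u,v \in \closure C_M \cup \closure C_N$ (by definition of $\sim_{\closure C_M \cup \closure C_N}$), so the relevant case of \Cref{lem:rotations-cost} is preserved as $L'$ varies.

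Cases 1 and 2 are disposed of immediately. In Case 2 the cost is independent of $L'$, so either rotation direction works trivially. In Case 1, $\cost(\Gamma(\sigma)) = (m_1^{L'}/m_1)\cdot\cost(\sigma)$ is non-decreasing as $m_1^{L'}$ grows; since the slope of $L$ is at least $1$, rotating toward the diagonal (decreasing the slope $1/m_1^{L'}$) makes $m_1^{L'}$ increase, yielding the desired inequality.

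Cases 3 and 4 I would handle uniformly by combining their formulas: regardless of the sign of $p_{L'}(u) - p_{L'}(v)$, a short simplification of the expressions in \Cref{lem:rotations-cost} gives
\[
\cost(\Gamma(\sigma)) \;=\; \frac{\bigl|\,m_1^{L'}(v_2 - r_2) + (r_1 - u_1)\,\bigr|}{\delta},
\]
the absolute value of an affine function of $m_1^{L'}$. Such a function is either monotone on our interval or V-shaped with a unique minimum; either way, at least one direction of perturbing $m_1^{L'}$ yields a non-decreasing cost, and the corresponding direction of rotation about $r$ produces the required $L'$.

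The step I expect to require most attention is precisely this merging of Cases 3 and 4, which sidesteps what would otherwise be a delicate analysis of whether the sign of $p_{L'}(u) - p_{L'}(v)$ is preserved along the rotation; with the unified absolute-value formula, no appeal to switch points beyond those already in $\overline\Omega$ is needed for this lemma. A minor edge case is when $L$ has slope exactly $1$: then any rotation would cross $[0:1:1] \in \overline\Omega$, so the equivalence class of $L$ reduces to $\{L\}$ itself and the statement holds vacuously.
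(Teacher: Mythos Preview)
Your argument is correct and follows the same overall strategy as the paper: reduce to the case of slope larger than $1$, then do a case split according to the four alternatives of \Cref{lem:rotations-cost}. Cases 1 and 2 you handle exactly as the paper does.

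The genuine difference is in how you treat the ``opposite sides'' situation (the paper's Case 2, your Cases 3/4). The paper keeps the two sign cases separate and then splits further into four subcases 2(a)--(d), according to the position of $\lub(u,v)$ relative to $L$ and whether $v_2 \gtrless r_2$; in each subcase it determines explicitly whether to rotate toward or away from the diagonal and describes what the rotated line eventually hits. You instead observe that the two formulas in \Cref{lem:rotations-cost}(3)--(4) are negatives of one another, collapse them into the single expression $\bigl|m_1^{L'}(v_2 - r_2) + (r_1 - u_1)\bigr|/\delta$, and invoke convexity of $t \mapsto |at+b|$ to conclude that some direction of rotation is non-decreasing. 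This is cleaner for the lemma as stated and, as you note, sidesteps any worry about the sign of $p_{L'}(u) - p_{L'}(v)$ changing. What the paper's finer analysis buys is the explicit identification of the rotation direction and of the terminal point (a point of $\closure C_M\cup\closure C_N\cup\overline\Omega$, the diagonal direction, or a vertical line), information that is later reused in \Cref{cor:no-vertical}; your argument does not supply this, but the lemma itself does not require it.

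One small wording correction: the slope-$1$ case is not so much vacuous as excluded by hypothesis, since a diagonal $L$ already contains $[0:1:1]\in\overline\Omega$, which is distinct from the proper point $r$. The paper makes exactly this observation.
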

    
    \begin{proof}
    Assume that $L$ does not pass through any point of $\closure C_M\cup \closure C_N\cup \overline{\Omega}$ other than $r$. 
      Recall that the point $[0:1:1]$ lies in $\overline{\Omega}$, so by our assumptions, $L$ is not a line with diagonal direction.
       Assume, without loss of generality, that the slope of $L$ is larger than $1$, that is $\hat{m}^L =m_1<m_2=1$.
      Consider the matching $\sigma$, and suppose that
      \[c(\sigma) = \frac{|p_L(u) - p_L(v)|}{\delta}.\]
%
      By assumption, one of the following mutually exclusive cases applies:
      \begin{itemize}
      \item  at least one of the points $u$ and $v$ is not on $L$, and $r$ equals the other point, 
      \item neither of the points $u$ and $v$ lie on $L$.
      \end{itemize}

      We therefore have the following cases.
      \begin{enumerate}
      \item The points $u$ and $v$ lie on the same side of $L$, including the possibility that one of them lies on $L$.
        In other words, either $A^L_u$ and $A^L_v$ are equal, or one is a proper subset of the other one.
      \item The points $u$ and $v$ lie strictly on different sides of $L$, implying that neither of them lie on $L$.
        In other words, one of $A^L_u$ and $A^L_v$ equals $\{1\}$, and the other one equals $\{2\}$.
      \end{enumerate}

      We prove each case separately.
      \begin{enumerate}
      \item First suppose that $u$ and $v$ lie on the same side of $L$.
        This means that either both $A^L_u$ and $A^L_v$  contain $1$ or both contain $2$.
        Let $L'$ be a line in the same equivalence class as $L$ with respect to $\closure C_M\cup \closure C_N \cup \overline{\Omega}$.
        
        We have two subcases.
        \begin{enumerate}
        \item Suppose that $A^L_u$ and $A^L_v$ both contain $2$.
          Then, by \Cref{lem:rotations-cost}(1), we have that 
          \[\cost(\Gamma(\sigma)) = \frac{\hat{m}^{L'}}{\hat{m}^L}\cdot \cost(\sigma).\]
          Therefore rotating $L$ towards the diagonal will increase the cost, at least until we hit a point of $\closure C_M\cup \closure C_N \cup \overline{\Omega}$, possibly the diagonal direction.

        \item Next suppose that both $A^L_u$ and $A^L_v$ contain $1$.
          Then, by \Cref{lem:rotations-cost}(2), we have 
          \[\cost(\Gamma(\sigma)) = \cost(\sigma).\]
          So the cost of the matching does not change as we rotate $L$ about $r$.
          We can once again rotate towards the diagonal without decreasing the cost, at least until we hit a point in $\closure C_M\cup \closure C_N \cup \overline{\Omega}$, including the diagonal direction.
        \end{enumerate}

      \item Next suppose that $u$ and $v$ lie on different sides of $L$.
        Assume, without loss of generality, that $A^L_u = \{1\}$ and $A^L_v = \{2\}$.
        That is, $u$ lies strictly to the right of the line $L$ and $v$ lies strictly to the left of the line $L$.
        
        Let $L'$ be another line obtained by rotating $L$ around $r$ without crossing any point in $\closure C_M\cup \closure C_N \cup \overline{\Omega}$.   The sign of  $p_{L'}(u) - p_{L'}(v)$ is determined by $A^L_{\lub(u,v)}$.

        \begin{itemize}
        \item If $\{1\} \subseteq A^L_{\lub(u,v)}$, then $p_{L'}(u) - p_{L'}(v) \geq 0$. By \Cref{lem:rotations-cost}(4), we have 
        \begin{equation}\label{eq:uv2}
        \cost(\Gamma(\sigma)) = \frac{1}{\delta}\cdot \left(m_1^{L'}(r_2 - v_2) +  u_1 - r_1 \right).
        \end{equation}
        
        \item If $\{2\} \subseteq A^L_{\lub(u,v)}$, then $p_{L'}(u) - p_{L'}(v) \leq 0$. By \Cref{lem:rotations-cost}(3), we have 
         \begin{equation}\label{eq:vu2}
          \cost(\Gamma(\sigma)) = \frac{1}{\delta} \cdot \left(m_1^{L'}(v_2 - r_2) + r_1- u_1 \right).
          \end{equation}
 	\end{itemize}
        
        We now have the following further subcases to consider.

        \begin{enumerate}
        \item Suppose that $\{1\} \subseteq A^L_{\lub(u,v)}$ and $v_2 \geq r_2$.
          By~\Cref{eq:uv2}, taking $m_1^{L'} < m_1$ rotates $L$ away from the diagonal direction and does not decrease the cost of the matching $\sigma$. 
          In this case, we may hit no point of $\closure C_M\cup \closure C_N \cup \overline{\Omega}$ before we get to the vertical line through $r$, that is, we may eventually hit the point $[0:0:1]\in \ell_\infty$.

        \item Now, suppose that $\{1\} \subseteq A^L_{\lub(u,v)}$ and $v_2 < r_2$.
          This time, we take $m_1^{L'} > m_1$, which rotates the line $L$ towards the diagonal direction without decreasing its cost. 
        In this case, we may hit no point of $\closure C_M\cup \closure C_N \cup \overline{\Omega}$ before we get to the diagonal line through $r$, that is, we may eventually hit the point $[0:1:1]\in \overline{\Omega}$.
        
        \item Next, suppose that $\{2\} \subseteq A^L_{\lub(u,v)}$ and $v_2 > r_2$.
          So by~\Cref{eq:vu2}, we see that taking $m_1^{L'} > m_1$, that is, rotating the line $L$ towards the diagonal slope, does not decrease the cost.

        \item Finally, suppose that $\{2\} \subseteq A^L_{\lub(u,v)}$ and $v_2 \leq r_2$.
          This time, we can take $m_1^{L'} < m_1$, which rotates the line away from the diagonal slope.
          This does not decrease the cost, and we will hit the point $u$  before getting to a vertical line. 

\end{enumerate}

              \end{enumerate}
           When  the slope of $L$ is smaller than $1$, that is $\hat{m}^L =m_2<m_1=1$, the argument is very similar except that 
          for the case 2(c) where we may hit the point $[0:1:0]\in \ell_{\infty}$ instead of the point $[0:0:1]$. 
      The case when the slope of $L$ is exactly 1 is excluded by the assumption that $L$ intersects $\closure C_M\cup \closure C_N\cup \overline{\Omega}
      $ only at the proper point $r$.
    \end{proof}
    
\Cref{lem:rotations} is used in the following way.

 \begin{remark}
          For each line $L$ as in \Cref{lem:rotations}, there is a direction of rotation such that  
      \begin{itemize}
      \item rotating $L$ in such direction around $r$ does not decrease $\cost(\sigma)$, 
      \item during the rotation the line remains in the same equivalence class as $L$ with respect to $\closure C_M\cup \closure C_N\cup \overline{\Omega}$,
      \end{itemize}
      until 
      \begin{itemize}
          \item eventually, the rotated line hits a point in $\closure C_M\cup \closure C_N \cup \overline{\Omega}\cup \{[0:1:0], [0:0:1]\}$.
       \end{itemize}
       
     \end{remark}

\begin{remark}[Diagonal Lines]\label{rem:diaglines}
    In \Cref{lem:rotations}, diagonal lines that do not appear naturally as lines through two critical or switch points are artificially added as a conventional stopping criterion during a rotation.  We believe it may be possible to prove our results without adding $[0:1:1]$ as a point in $\overline{\Omega}$.  However, in the next section, we will prove that this choice also allows us to discard vertical and horizontal lines. 
\end{remark}

\subsection{Vertical and Horizontal Lines}\label{sec:VandHlines}

As can be seen in the proof of \Cref{lem:rotations}, it is not immediately apparent that the bottleneck distance for horizontal or vertical lines is irrelevant for the overall matching distance computation - in other words, this lemma does not discount the possibility that the matching distance is achieved as a limit to such a line.  To understand better the role that horizontal and vertical lines play in the computation of the matching distance, we must more formally derive the cost of a vertical line (\Cref{def:verticalcost}); the correctness of this definition relies on \Cref{thm:vertical}.
After defining the cost of a vertical line, we are able to show that these lines do not determine the matching distance, via \Cref{cor:no-vertical}.  The case dealing with horizontal lines is analogous.

In this section, we maintain the notations and assumptions of the previous section. In particular, the persistence modules $M$ and $N$ are assumed to satisfy Property (P)  for some finite sets $C_M$ and $C_N$, and Property (Q) for some set $\Omega$. Moreover, we maintain the notation $\overline{\Omega}:=\Omega\cup \{[0:1:1]\}$.

In order to prove \Cref{thm:vertical}, we first prove the following two lemmas concerning the cost of lines within the same equivalence class as a vertical line. 

\begin{lemma}\label{lem:steepenoughline}
Let $V\subseteq \RR^2$ be a vertical line  and let $c, c'  $ be distinct points on $V$ such that the open line segment on $V$ between them does not intersect $\closure C_M\cup \closure C_N \cup \overline{\Omega}$. Then there exists a $\tilde{m}_1>0$ sufficiently small such that, for any two parallel  lines $L$ and $L'$ with direction $\vec m=( m_1, 1)$ and $0<m_1<\tilde m_1$ that intersect $V$ on the open line segment $c,c'$, it holds that $L\sim_{\closure C_M\cup \closure C_N\cup\overline{\Omega}}L'$.
\end{lemma}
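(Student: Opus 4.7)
My plan is to establish the equivalence $L \sim_{\closure C_M\cup \closure C_N\cup \overline{\Omega}} L'$ pointwise on the finite set $\closure C_M\cup \closure C_N\cup \overline{\Omega}$. For every point $u$ in this set I will exhibit an explicit threshold $\tilde m_1(u) > 0$ such that, for every line $L$ with direction $(m_1,1)$ and $m_1 < \tilde m_1(u)$ whose intersection with $V$ lies in the open segment from $c$ to $c'$, the reciprocal position $A^L_u$ depends only on $u$, and not on $m_1$ or on the chosen intersection point $(V_x,t)$ with $V$. Since two such lines $L$ and $L'$ then share $A^L_u = A^{L'}_u$ for every $u$, taking $\tilde m_1 := \min_u \tilde m_1(u)$ over the finite set yields the lemma.

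The case split is by the location of $u$. If $u$ is a proper point not on $V$, say $u_1 < V_x$, I parameterize each admissible $L$ by its intersection $(V_x, t)$ with $V$, so that at height $u_2$, $L$ has $x$-coordinate $V_x + m_1(u_2 - t)$. The condition that $u$ lies strictly to the left of $L$ is automatic when $u_2 \ge t$ and reduces to $m_1(t - u_2) < V_x - u_1$ when $u_2 < t$; since $t$ ranges over the bounded interval $(c_2, c'_2)$, this produces an explicit positive $\tilde m_1(u)$, and the case $u_1 > V_x$ is symmetric. If instead $u$ is a proper point on $V$, then the hypothesis that the open segment $(c,c')$ misses $\closure C_M\cup \closure C_N\cup \overline{\Omega}$ forces $u_2 \le c_2$ or $u_2 \ge c'_2$. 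In either case the sign of $u_2 - t$ is constant for all $t \in (c_2, c'_2)$, and a direct check shows that $A^L_u$ equals $\{1\}$ or $\{2\}$ uniformly, for any $m_1 > 0$.

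For a point at infinity $u = [0:v_1:v_2]$ with $v_1, v_2 > 0$, the reciprocal position $A^L_u$ depends only on the direction of $L$, namely on the comparison between the slope $1/m_1$ of $L$ and the ratio $v_2/v_1$ associated to $u$. Choosing $m_1 < v_1/v_2$ forces $A^L_u = \{1\}$, and since $L$ and $L'$ share the direction $(m_1,1)$ they automatically agree on such $u$. Any points at infinity of $\overline{\Omega}$ with a coordinate equal to zero, should they occur, admit an analogous treatment by direct inspection of the relevant $S_A$. I expect the main obstacle to be careful bookkeeping: checking that the case split is exhaustive and each $\tilde m_1(u)$ is manifestly positive. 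The geometric content, once the parameterization of $L$ by $(V_x, t)$ is fixed, is essentially the observation that a nearly vertical line can be translated across $V$ without altering the relative position of any fixed point that is not on $V$ between $c$ and $c'$.
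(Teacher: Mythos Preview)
Your proposal is correct and follows essentially the same approach as the paper. Both arguments split on whether a point of $\closure C_M\cup \closure C_N\cup \overline{\Omega}$ lies on $V$, lies off $V$, or is at infinity, and in each case produce a slope threshold guaranteeing a fixed reciprocal position; the only difference is cosmetic---you work pointwise and take $\tilde m_1 = \min_u \tilde m_1(u)$, whereas the paper packages the same estimates into global constants (a minimal horizontal gap $\varepsilon$ among proper points, the extreme second coordinates $c_2^{min}, c_2^{max}$, and a minimal $\varepsilon_\infty$ for the directions in $\overline\Omega_\infty$) and writes down a single formula for $\tilde m_1$.
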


\begin{proof}
Let us set
\begin{equation}
\begin{split}
\varepsilon_{\infty} := &\min \{m_1\in\RR\mid [0:m_1:m_2]\in \overline{\Omega}, m_2 = 1 \} \ , \\
c_{min} := & {\glb}\left(\closure C_M\cup \closure C_N \cup \Omega_P \right) \ , \\
c_{max} := & {\lub}\left(\closure C_M\cup \closure C_N \cup \Omega_P \right) \ , \\
\varepsilon := &\min \{|x_1-y_1| \mid  x_1 \neq y_1 \, , \, x, y\in \closure C_M\cup \closure C_N \cup \Omega_P\} \ . \\
\end{split}
\end{equation}

Assume, without loss of generality, that $c_2 < c'_2$. Taking $\varepsilon/0$ to be equal to $+\infty$, choose
\[
0<\tilde{m}_1 < \min \left\{\frac{\varepsilon}{|c_2' - c_2^{min}|}, \frac{\varepsilon}{|c_2^{max} - c_2|}, \varepsilon_{\infty} \right\} \ .
\]
As there are no points in $\closure C_M\cup \closure C_N \cup \overline{\Omega}$ on $V$ between $c$ and $c'$, we get that $L$ and $L'$ have the same reciprocal position with respect to all points in $V \cap (\closure C_M\cup \closure C_N \cup \overline{\Omega})$. Moreover, by the choice of their slope, $L$ and $L'$ are in the same reciprocal position with respect to all points in 
$(\closure C_M\cup \closure C_N \cup \overline{\Omega}) \setminus V$ as well.  Thus, $L\sim_{\closure C_M\cup \closure C_N \cup \overline{\Omega}}L'$.
\end{proof}

\begin{lemma}\label{lem:same-limit-local}
Let $V\subseteq \RR^2$ be a vertical line, and let $c, c'$ be distinct points on $V$ such that the line segment between them does not contain any point of  $\closure C_M\cup \closure C_N \cup \overline{\Omega}$.
Let $L$ be a line through a point $r\in V$ between $c$ and $c'$ with direction $\vec{m} = (m_1,1)$, $0<m_1< 1$.
 Then the limit of the weighted bottleneck distance along $L$ as it rotates around $r$ toward the vertical direction, written as
  \[\lim_{m_1 \to 0^+} \hat m^L d_B(\dgm M^L, \dgm N^L),\]
  is the same for all points $r$ in the open segment of $V$ between $c$ and $c'$.  Moreover, this convergence is uniform on the open line segment between $c$ and $c'$.
\end{lemma}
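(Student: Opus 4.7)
My strategy is to first stabilise the equivalence class of the nearly vertical lines, which by Property~(Q') fixes a single optimal matching pair realising the bottleneck distance; to then compute the pointwise limit at a fixed $r$ by explicit case analysis; and finally to upgrade pointwise to uniform convergence using \Cref{lem:translations-cost}.

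First, I would choose $\tilde m_1>0$ smaller than the bound from \Cref{lem:steepenoughline} so that, in addition, the reciprocal position $A^L_p$ is constant in $m_1\in(0,\tilde m_1)$ for every $p\in \closure C_M\cup \closure C_N\cup \overline{\Omega}$. This is a finite check: for a proper point $p\notin V$, $A^L_p$ eventually equals $\{1\}$ or $\{2\}$ according to the sign of $p_1-V_1$; for $p\in V\setminus (c,c')$ (assuming $c_2<c'_2$), $A^L_p$ is $\{2\}$ if $p_2\geq c'_2$ and $\{1\}$ if $p_2\leq c_2$; and for $[0:\mu_1:\mu_2]\in\overline{\Omega}$ with $\mu_1,\mu_2>0$, $A^L_p=\{2\}$ as soon as $m_1<\mu_1/\mu_2$. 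Combined with \Cref{lem:steepenoughline}, this shows that all lines with direction $(m_1,1)$ and $0<m_1<\tilde m_1$ passing through any point $r$ in the open segment between $c$ and $c'$ belong to a single equivalence class with respect to $\closure C_M\cup \closure C_N\cup \overline{\Omega}$. By Property~(Q'), there exist $u,v\in\closure C_M\cup \closure C_N$ and $\delta\in\{1,2\}$, independent of both $r$ and $m_1\in(0,\tilde m_1)$, realising $\hat m^L\,d_B(\dgm M^L,\dgm N^L)=m_1\,|p_L(u)-p_L(v)|/\delta$.

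Next, I would fix $r_0$ on the open segment and verify existence of the pointwise limit by case analysis on the (now stable) reciprocal positions of $u$ and $v$, using \Cref{eq:param-1,eq:param-2,eq:param-3} together with the standard normalisation, from which one computes $b_1=(V_1-m_1\,(r_0)_2)/(1+m_1)$. When $u,v$ lie on the same side of $L$, the limit of the cost is either $0$ (both on the left) or $|u_1-v_1|/\delta$ (both on the right); when they lie on opposite sides (say $u_1<V_1<v_1$), a direct computation yields
\[m_1\,\frac{|p_L(u)-p_L(v)|}{\delta}=\frac{|(v_1-V_1)+m_1((r_0)_2-u_2)|}{\delta}\longrightarrow \frac{v_1-V_1}{\delta}.\]
In every case the limit depends only on $u,v,V_1$, and $\delta$, not on $r_0$. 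For the uniform bound, I then apply \Cref{lem:translations-cost} to two parallel lines $L,L'$ with direction $(m_1,1)$ through distinct $r,r'$ on the open segment: the normalisation forces $b_1-b_1'=m_1(r'_2-r_2)/(1+m_1)$, so $(b_1-b_1')(1/m_1+1/m_2)=r'_2-r_2$, and Case~2 of the lemma (Case~1 giving equal costs directly) yields
\[|\cost(\sigma)-\cost(\Gamma(\sigma))|\le \frac{m_1\,|r_2-r'_2|}{\delta}\le \frac{m_1\,|c_2-c'_2|}{\delta}.\]
Since the right-hand side is independent of $r,r'$ and tends to $0$, combining with the pointwise limit at $r_0$ yields uniform convergence on the open segment to a common value.

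The main obstacle I expect lies in the bookkeeping of the first step: ensuring that the equivalence class is stable across all sufficiently small $m_1$ (not just for each fixed $m_1$) requires careful treatment of the points of $\closure C_M\cup \closure C_N\cup \overline{\Omega}$ lying on $V$ itself or at infinity. Once that stability is established, Property~(Q') makes the optimal pair rigid enough that the remaining steps reduce to direct substitutions into the parametrisations already provided by \Cref{lem:translations-cost}.
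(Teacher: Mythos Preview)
Your argument is correct and follows the same overall architecture as the paper: stabilise the equivalence class for small $m_1$, invoke Property~(Q') to fix the realising pair $(u,v)$, and then do a case analysis on the reciprocal positions of $u$ and $v$ to compute the limit and prove uniformity. The pointwise limits you compute in each case agree with those in the paper.

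There is, however, a genuine methodological difference in how you obtain uniform convergence. The paper runs the case analysis through \Cref{lem:rotations-cost}, writing out the weighted bottleneck distance explicitly as a function of $m_1$ and $r_2$ in each of the four cases and reading off both the limit and a uniform bound on the error directly from those formulas. You instead establish the pointwise limit at a single anchor point $r_0$ and then transfer it to all other $r$ via \Cref{lem:translations-cost}: your computation $(b_1-b_1')(1/m_1+1/m_2)=r'_2-r_2$ is correct, and the resulting bound $|\cost(\sigma)-\cost(\Gamma(\sigma))|\le m_1|c_2-c'_2|/\delta$ neatly separates the uniformity estimate from the case analysis. This buys you a cleaner decoupling of the two issues and avoids repeating the explicit error bound in each case; the paper's route, by contrast, is more self-contained in that it never needs to compare two distinct base points. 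You are also more careful than the paper on one point: you explicitly argue that the equivalence class is stable across all $m_1\in(0,\tilde m_1)$ simultaneously (handling proper points off $V$, on $V$, and points at infinity separately), whereas the paper relies on \Cref{lem:steepenoughline}, which strictly speaking only compares parallel lines, and leaves the stability under rotation implicit.
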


\begin{proof}
We consider the behavior of the weighted bottleneck distance along  lines $L(r,m_1)$ with direction $\vec{m} = (m_1,1)$, and $0<m_1<1$, through a point $r$ on the open segment of $V$ between $c$ and $c'$, as $m_1$ tends to $0$.

First of all we observe that,  for any two points $r$ and $r'$ on such open segment, and for sufficiently small $m_1>0$,  \Cref{lem:steepenoughline} ensures that $L:=L(r,m_1)\sim_{\closure C_M\cup \closure C_N \cup \overline{\Omega}} L(r',m_1)=:L'$. Hence, by Property (Q'),  if the optimal matching achieving the bottleneck distance along $L$ is $\sigma$, then the optimal matching achieving the bottleneck distance along $L'$ is $\Gamma_{L,L'}(\sigma)$, and that the matched pair giving the cost of $\Gamma_{L,L'}(\sigma)$ is the image of the matched pair giving the cost of $\sigma$ under $\Gamma_{L,L'}$: If $p_{L}(u),p_{L}(v)$ give the bottleneck distance along $L$, then $p_{L'}(u),p_{L'}(v)$ give the bottleneck distance along $L'$. In other words, for any line $L(\cdot, m_1)$ that is steep enough and intersects $V$ between $c$ and $c'$, the bottleneck distance is determined by the same pair $u,v$ of critical values.  

Now, considering the cases listed in \Cref{lem:rotations-cost} separately, we show that the weighted bottleneck distance along $L(r,m_1)$ converges uniformly, independent of the choice of $r$, as $m_1$ tends to 0. That is, for any $\epsilon > 0$, there exists a $\tilde{m}_1>0$ such that for any $0<m_1 < \tilde{m}_1$ and any point $r' \in V$  in between $c$ and $c'$, the line $L'$ through $r''$  with direction $(m_1,1)$ gives
\[
|\hat m^{L'} d_B(\dgm M^{L'}, \dgm N^{L'}) - \lim_{m_1 \to 0^+} \hat m^L d_B(\dgm M^L, \dgm N^L)| < \epsilon \ . 
\]

Let $\epsilon > 0$.

\begin{enumerate}
    \item If both $A_v^{L(r,m_1)}$ and $A_u^{L(r,m_1)}$ contain $2={\argmax}_{i\in \{1,2\}}\{m_i\}$, then the weighted bottleneck distance is $m_1\frac{|u_2-v_2|}{\delta}$, which is independent of $r$ and tends to $0$ when $m_1$ tends to $0$. In particular, for any $r \in V$ between $c$ and $c'$ and any $0 < m_1 < \frac{\epsilon}{|u_2-v_2|}$, we have that $m_1\frac{|u_2-v_2|}{\delta}< \epsilon$, proving the uniform convergence of $\hat m^{L(r,m_1)} d_B\left(\dgm M^{L(r,m_1)}, \dgm N^{L(r,m_1)}\right)$ to $0$ as $m_1$ tends to $0$ in this case. 
    
    \item If  $A_v^{L(r,m_1)}$ and $A_u^{L(r,m_1)}$ contain $1={\argmin}_{i\in \{1,2\}}\{m_i\}$, then the weighted bottleneck distance is constantly equal to $\frac{|u_1-v_1|}{\delta}$, independently of both $r$ and $m_1$. Hence, it converges uniformly (for steep enough lines intersecting $V$ between $c$ and $c'$) to $\frac{|u_1-v_1|}{\delta}$ as $m_1$ tends to $0$  in this case. 
    
    \item If $A_u^{L(r,m_1)}=\{1\}$ and $A_v^{L(r,m_1)}=\{2\}$, and $p_{L(r,m_1)}(u)-p_{L(r,m_1)}(v) \leq 0$, then the weighted bottleneck distance is $\frac{1}{\delta} \cdot \left(m_1(v_2 - r_2) + r_1- u_1 \right)$. As $m_1$ tends to $0$, this tends to $\frac{1}{\delta} \cdot (r_1-u_1)$ regardless of $r$ (that is, regardless of $r_2$, as $r_1$ is the same for any point $r$ between $c$ and $c'$), thanks to the fact that $r_2$ is bounded between $c_2$ and  $c'_2$.
        
    In particular, we have that 
    \begin{equation*}
    \begin{split}
   \left | m_1 d_B\left(\dgm M^{L(r,m_1)}, \dgm N^{L(r,m_1)}\right) - \frac{1}{\delta} \cdot (r_1-u_1)\right| = m_1 \cdot (v_2-r_2)\\ < m_1 \cdot \max\{|v_2-c_2|, |v_2-c_2'|\} \ .
    \end{split}
    \end{equation*}
    Therefore,  for any $r \in V$ between $c$ and $c'$ and any $0 < m_1 < \frac{\epsilon}{\max\{|v_2-c_2|, |v_2-c_2'|\}}$, we have 
    $\left|m_1\cdot\frac{|u_2-v_2|}{\delta}- \frac{1}{\delta} \cdot (r_1-u_1)\right| < \epsilon$, 
    proving uniform convergence of the weighted bottleneck distance along $L(r,m_1)$ to $\frac{1}{\delta} \cdot (r_1-u_1)$ as $m_1$ tends to $0$ in this case. 
    
    \item If $A_u^{L(r,m_1)}=\{1\}$ and $A_v^{L(r,m_1)}=\{2\}$, and $p_{L(r,m_1)}(u)-p_{L(r,m_1)}(v) \geq 0$, then, by an analogous argument to the one made in the previous case, we can see that the weighted bottleneck distance along $L(r,m_1)$ converges uniformly to $\frac{1}{\delta} \cdot (u_1-r_1)$ as $m_1$ tends to $0$ in this case. 
   
   \end{enumerate}

\end{proof}

\begin{theorem}\label{thm:vertical}
Given a vertical line $V\subseteq \RR^2$,  the limit \[\lim_{m_1 \to 0^+} \hat m^L d_B(\dgm M^L,\dgm N^L) \]
of the weighted bottleneck distance of the line $L$ through the point  $r\in V$ with direction $\vec m=(m_1,1)$, $0<m_1<1$,  as $L$ rotates toward the vertical direction, does not depend on the point $r$.
\end{theorem}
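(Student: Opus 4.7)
The plan is to deduce this global statement from the local uniformity in \Cref{lem:same-limit-local} via a standard three-epsilon continuity argument across each special point on $V$. Let
\[g_{m_1}(r) := \hat m^{L(r,m_1)} \, d_B(\dgm M^{L(r,m_1)}, \dgm N^{L(r,m_1)}),\]
and set $f(r) := \lim_{m_1 \to 0^+} g_{m_1}(r)$ whenever this limit exists. The set $V \cap (\closure C_M \cup \closure C_N \cup \overline{\Omega})$ is finite, so its complement in $V$ consists of finitely many open segments (and two unbounded rays). By \Cref{lem:same-limit-local}, $f$ is well-defined and constant on each such open segment. It therefore suffices to show that the values of $f$ on two open segments of $V$ that share a common boundary point $r_0$ agree.

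Fix such a point $r_0 \in V \cap (\closure C_M \cup \closure C_N \cup \overline{\Omega})$, and let $f^-, f^+$ denote the constant values of $f$ on the open segments of $V$ immediately below and above $r_0$. Given $\epsilon > 0$, applying \Cref{lem:same-limit-local} to each of the two adjacent segments yields $\tilde m_1 > 0$ such that for every $0 < m_1 < \tilde m_1$ and every $r$ in either segment,
\[|g_{m_1}(r) - f(r)| < \epsilon/4.\]
Fix any $m_1 \in (0, \tilde m_1)$. The remaining ingredient is continuity of the real-valued function $r \mapsto g_{m_1}(r)$ at $r_0$ for this fixed $m_1$: for $r, r'$ close in $V$, the lines $L(r, m_1)$ and $L(r', m_1)$ are parallel translates by a vector of norm $O(|r - r'|)$, so the restricted modules $M^{L(r, m_1)}$ and $M^{L(r', m_1)}$ (and likewise for $N$) are $O(|r - r'|)$-interleaved; classical bottleneck stability then yields continuity of $d_B$, and hence of $g_{m_1}$, since the weight $\hat m^{L(r, m_1)} = m_1$ is constant in $r$.

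Using this continuity, pick $\eta > 0$ such that $|g_{m_1}(r) - g_{m_1}(r_0)| < \epsilon/4$ whenever $|r - r_0| < \eta$. Choosing $r^-$ in the lower segment and $r^+$ in the upper segment, both within distance $\eta$ of $r_0$, the triangle inequality yields
\[|f^- - f^+| \leq |f^- - g_{m_1}(r^-)| + |g_{m_1}(r^-) - g_{m_1}(r_0)| + |g_{m_1}(r_0) - g_{m_1}(r^+)| + |g_{m_1}(r^+) - f^+| < \epsilon.\]
Since $\epsilon > 0$ was arbitrary, $f^- = f^+$. Applying this at each of the finitely many special points on $V$ shows that $f$ takes a single value on all of $V$.

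The main obstacle I anticipate is justifying the continuity claim for $g_{m_1}$ at $r_0$: because $r_0$ is a critical value of $M$ or $N$ (or a switch point), the persistence diagrams of the restrictions can change combinatorially as $r$ crosses $r_0$, and the matchings realising $d_B$ may jump. Nonetheless, the underlying restricted persistence modules themselves vary continuously in the interleaving distance as the line translates parallelly, and the classical bottleneck stability theorem converts this into continuity of $d_B$ as a real-valued function, which is all that is needed here.
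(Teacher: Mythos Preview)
Your argument is correct and follows essentially the same route as the paper's: both combine the uniform convergence on open segments from \Cref{lem:same-limit-local} with continuity of $r \mapsto g_{m_1}(r)$ for fixed $m_1$ (the paper invokes ``internal stability'' \cite[Thm.~2]{Landi2018}, which is exactly your interleaving-plus-bottleneck-stability observation for parallel translates), glued together by a three-epsilon triangle-inequality argument; the only organisational difference is that the paper compares the special point $c$ directly to a neighbouring $r$, whereas you compare the two adjacent segment values $f^-,f^+$ through $r_0$. One small omission: as written you only conclude $f^- = f^+$, but the theorem also asserts that the limit exists at each special point $r_0$ itself and equals this common value; this follows immediately from the same two ingredients, so it is not a genuine gap.
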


\begin{proof}
By \Cref{lem:same-limit-local}, we already know that, for every two points $r,r'\in V$ such that the closed segment between them does not contain any point of $\closure C_M\cup \closure C_N\cup \Omega_P$, the limit of the weighted bottleneck distance of lines through the point  $r\in V$  as they rotate toward the vertical direction is equal to that of lines through $r'$. 

Let us now take $c$ to belong to $V\cap (\closure C_M\cup \closure C_N\cup \overline{\Omega})$ and  $r\in V$ to be arbitrarily close to $c$. Let $L(r)$ and $L(c)$ be parallel lines with positive slope through $r$ and $c$, respectively. 

Let us now assume that  $\hat m^{L(r)} d_B(\dgm M^{L(r)},\dgm N^{L(r)})$ tends to the value $\ell$, which has to be the same for all $r$ sufficiently close to $c$ by \Cref{lem:same-limit-local}, and  $\hat m^{L(c)} d_B(\dgm M^{L(c)},\dgm N^{L(c)})$ tends to the value $\ell(c)$, as $L(c)$ and $L(r)$ tend to $V$ by rotating counterclockwise about $r$ and $c$, respectively. Let us also assume, by contradiction, that  $|\ell(c)-\ell|>0$ and take $0<\varepsilon<|\ell(c)-\ell|/3$. 

By internal stability  \cite[Thm. 2]{Landi2018}, we know that the weighted bottleneck distance for $L(r)$ tends to that of $L(c)$ as $r$ tends to $c$, so 
\begin{equation}\label{eq:thm-vert-1}
\left|\hat m^{L(r)} d_B\left(\dgm M^{L(r)},\dgm N^{L(r)}\right)-\hat m^{L(c)} d_B\left(\dgm M^{L(c)},\dgm N^{L(c)}\right)\right|\le \varepsilon
\end{equation}
for every $r$ sufficiently close to $c$.

By \Cref{lem:same-limit-local}, we also know that the weighted bottleneck distance for $L(r)$ tends  to $\ell$ uniformly as we rotate $L(r)$ towards the vertical line and move $r$ on the line. Hence, for all $r$ sufficiently close to $c$, there is some real number  $\tilde m_1>0$ such that for all $0<m_1\le \tilde m_1$, the lines $L(r)$ through $r$ with direction $\vec m=(m_1,1)$ satisfy 
\begin{equation}\label{eq:thm-vert-2}
\left|\hat m^{L(r)} d_B\left(\dgm M^{L(r)},\dgm N^{L(r)}\right)-\ell\right|\le \varepsilon
\end{equation}
We may now take $m_1>0$ to be sufficiently small so that for the lines $L(c)$ through $c$ with direction $\vec m=(m_1,1)$ we also have
\begin{equation}\label{eq:thm-vert-3}
\left|\hat m^{L(c)} d_B\left(\dgm M^{L(c)},\dgm N^{L(c)}\right)-\ell(c)\right|\le \varepsilon
\end{equation}

By applying the triangle inequality, and using \Cref{eq:thm-vert-1,eq:thm-vert-2,eq:thm-vert-3}, we get $|\ell(c)-\ell|\le 3\varepsilon< |\ell(c)-\ell|$, giving a contradiction.
\end{proof}

The above preposition guarantees that the following notion for the cost of a vertical line is well defined.  The case of the analogous statement for horizontal lines can be handled in a similar way.

\begin{definition}\label{def:verticalcost}
The \emph{cost  of a vertical line $V$} is the limit of the weighted bottleneck distance of  a line $L$ through a fixed point $r\in V$  as it rotates toward the vertical direction:
\[\cost(V):=\lim_{\hat m^L \to 0^+} \hat m^L d_B(\dgm M^L, \dgm N^L)\]
where $L$ is any line  through $r\in V$ with direction $\vec m=(m_1,m_2)$ and $\hat m^L=m_1$. 
\end{definition}

We can now conclude that the matching distance can always be achieved by a non-vertical and non-horizontal line, as is shown in the following corollary.

\begin{corollary}\label{cor:no-vertical}
For every vertical line $V$, there exists a line $L$ of positive slope through a point of $\closure C_M\cup \closure C_N \cup \Omega_P$ and a different point of $\closure C_M\cup \closure C_N \cup \overline{\Omega}$ such that
\[
\cost(V)\le \hat m^L d_B(\dgm M^L, \dgm N^L) \ .
\]
An analogous statement is true for horizontal lines. 
\end{corollary}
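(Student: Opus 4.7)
The plan is to approximate $V$ by positive-slope lines whose weighted bottleneck distances converge to $\cost(V)$, to move each such line within its equivalence class via the translation and rotation lemmas to a line through two points of the special finite set, and then to conclude by finiteness.

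First I would dispose of the trivial case: if both $M$ and $N$ are trivial, then $\cost(V)=0$ and any $L$ works. Otherwise, I would choose $r\in V$ lying outside $\closure C_M\cup\closure C_N\cup \Omega_P$, and for a sequence $m_1^{(n)}\searrow 0$ let $L_n$ be the line through $r$ with direction $(m_1^{(n)},1)$. By \Cref{def:verticalcost}, $\hat m^{L_n}d_B(\dgm M^{L_n},\dgm N^{L_n})\to\cost(V)$. I would then apply \Cref{lem:translations} to translate $L_n$, within its equivalence class with respect to $\closure C_M\cup\closure C_N\cup\overline{\Omega}$ and in the cost-non-decreasing direction, to a line $L_n'$ passing through some $p_n\in\closure C_M\cup\closure C_N\cup\Omega_P$; and then apply \Cref{lem:rotations} to rotate $L_n'$ about $p_n$, again in the cost-non-decreasing direction, to a line $L_n^*$ that meets a second point $q_n\in\closure C_M\cup\closure C_N\cup\overline{\Omega}\cup\{[0:1:0],[0:0:1]\}$. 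Neither move decreases the weighted bottleneck distance, so $\hat m^{L_n^*}d_B(\dgm M^{L_n^*},\dgm N^{L_n^*})\geq\hat m^{L_n}d_B(\dgm M^{L_n},\dgm N^{L_n})$.

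The main case is that infinitely many $q_n$ lie in $\closure C_M\cup\closure C_N\cup\overline{\Omega}$; then by finiteness of the pairs $(p_n,q_n)\in(\closure C_M\cup\closure C_N\cup\Omega_P)\times(\closure C_M\cup\closure C_N\cup\overline{\Omega})$, a subsequence has $(p_n,q_n)=(p,q)$ constant. The corresponding fixed line $L^*$ through $p$ and $q$ has positive slope, passes through a point of $\closure C_M\cup\closure C_N\cup\Omega_P$ and a different point of $\closure C_M\cup\closure C_N\cup\overline{\Omega}$, and by the inequality above satisfies
\[
\hat m^{L^*}d_B(\dgm M^{L^*},\dgm N^{L^*}) \geq \lim_n \hat m^{L_n}d_B(\dgm M^{L_n},\dgm N^{L_n}) = \cost(V),
\]
which is the conclusion.

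The hard part will be the remaining case, in which eventually $q_n\in\{[0:1:0],[0:0:1]\}$; after passing to a subsequence with $p_n=p$ constant and, say, $q_n=[0:0:1]$ (the other case is symmetric), this says that the cost-non-decreasing rotation about $p$ never meets any point of $\closure C_M\cup\closure C_N\cup\overline{\Omega}$ before approaching the vertical direction, so by continuity $\cost(V_p)\geq\cost(V)$, where $V_p$ is the vertical line through $p$. I would then re-run the entire argument with $V$ replaced by $V_p$; since $p$ is already a proper point, no translation is needed at the outset. To close the proof I would need to show that this recursion must terminate in the non-degenerate case. I expect this to follow from the finiteness of $\closure C_M\cup\closure C_N\cup\Omega_P$ (so the proper points produced must repeat, forcing stabilization), combined with the piecewise-linear dependence of the weighted bottleneck distance on the slope parameter within a single equivalence class, and the presence of the diagonal direction $[0:1:1]$ in $\overline{\Omega}$, which obstructs a cost-monotone rotation from rotating all the way to a coordinate direction without first meeting a non-coordinate point of the special set.
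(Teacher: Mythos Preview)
Your overall strategy—approximate $V$ by steep lines, push each into the special finite set via \Cref{lem:translations} and \Cref{lem:rotations}, then use finiteness—is reasonable, but the termination argument for the recursion case has a genuine gap, and the fix you sketch does not work.

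The specific problem is your final claim that the presence of $[0:1:1]$ in $\overline{\Omega}$ ``obstructs a cost-monotone rotation from rotating all the way to a coordinate direction.'' This is false: look back at case~2(a) in the proof of \Cref{lem:rotations}. There the cost-non-decreasing direction is \emph{away} from the diagonal, toward the vertical, and nothing prevents the rotation from reaching $[0:0:1]$ without meeting any point of $\closure C_M\cup\closure C_N\cup\overline{\Omega}$. Since your approximating lines $L_n'$ are already steeper than diagonal, rotating them toward vertical never crosses the diagonal direction at all, so $[0:1:1]$ is simply irrelevant. Once you land back on a vertical line $V_p$, re-running the argument about $p$ can put you straight back into case~2(a), and you loop without progress. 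Mere repetition of the proper point $p$ does not force stabilization to a line of the required type.

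The paper avoids this by a key idea you are missing: it exploits \Cref{thm:vertical} to choose the rotation center $r$ on $V$ strictly \emph{higher} than every point of $\closure C_M\cup\closure C_N\cup\Omega_P$. With that choice, case~2(a) (which requires $v_2\geq r_2$) and case~2(d) are ruled out, so the cost-non-decreasing rotation is always toward the diagonal and the first special point hit is either at infinity (then translate and finish) or a proper point $c$ lying strictly to the \emph{left} of $V$. If a second rotation about $c$ returns a vertical line, that line is $V_c$, and iterating with a new high $r'$ on $V_c$ produces a proper point strictly to the left of $c$. Finiteness of proper points then gives termination. Your generic choice of $r$ and the order translate-then-rotate throw away exactly this leftward monotonicity, which is what makes the recursion finite.
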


\begin{proof}
By \Cref{thm:vertical} the cost of a vertical line $V$ does not depend on the point $r\in V$ used to compute it. Thus, we are free to choose $r\in V$ such that it is higher than any point in $\closure C_M\cup \closure C_N\cup \Omega_P$. We can now rotate $V$ about $r$ making sure we stop at the first point of $\closure C_M\cup \closure C_N\cup \overline{\Omega}$ we hit.  The proof of \Cref{lem:rotations} ensures that, if we rotate towards the diagonal, the obtained line $L$ has a weighted bottleneck distance not smaller than $\cost(V)$ except possibly when we are in case 2.(a) or 2.(d).  However, since we have chosen $r$ higher than any point in $\closure C_M\cup \closure C_N\cup \Omega_P$, we cannot be in case 2.(a).


Finally, suppose that when we slightly rotate $V$ around $r$ toward the diagonal direction we get lines $L'$ that fall in case 2.(d). As usual, we call $u,v$ the two points realizing the weighted bottleneck distance along such lines. Let us recall that, in case 2.(d), $u$ lies strictly to the right of $L'$, $v$ lies strictly to the left of $L'$,  and $\lub(u,v)$ lies to the left of $L'$. However, if $\lub(u,v)$ lies to the left of $L'$ and $r_2>v_2,u_2$, it must be the case that $r_1\geq u_1$, since $r$ is a point on the line $L'$.  This would imply that $u$ does not lie strictly to the right of $V$, the vertical line through $r$, which is a contradiction.  So, we cannot be in case 2.(d).

If the obtained line $L$ intersects $\overline{\Omega}$ only at a point at infinity, then \Cref{lem:translations} guarantees that we can translate $L$ until we hit a point of $\closure C_M\cup \closure C_N\cup \Omega_P$ without decreasing the weighted bottleneck distance, and we are done.

If the obtained line $L$ is through a point $c$ in $\closure C_M\cup \closure C_N\cup \Omega_P$, then  $c$ must be strictly to the left of $V$ because $r$ was chosen to be strictly higher than $\closure C_M\cup \closure C_N\cup \Omega_P$. Now we can use $c$ as a new center of rotation applying again \Cref{lem:rotations}. If we get again a vertical line, then we can iterate the argument. Eventually, there will be no points strictly to the left of the newly obtained  vertical line, and thus the process will terminate with a line $L$ through a point of $\closure C_M\cup \closure C_N \cup \Omega_P$ and a different point of $\closure C_M\cup \closure C_N \cup \overline{\Omega}$.

\end{proof}

\subsection{Proof of the main theorem}\label{sec:proof-main}

We are now ready to prove \Cref{thm:main}, thanks to \Cref{prop:Q}, \Cref{lem:translations}, \ref{lem:rotations}, and \Cref{cor:no-vertical}, which we restate below for convenience.

\main* 

\begin{proof}
If $M$ and $N$ are both trivial, then $M^L$ and $N^L$ are also trivial for all lines $L$, yielding that $d_{match}(M,N)=0$. Otherwise, if $M$ and $N$ are not both trivial, consider the critical sets $C_M$ and $C_N$ guaranteed by Property (P), and which cannot be both empty. Consider a line $L$ on which $M^L$ and $N^L$ are also not both trivial.  Then, the matching distance between $M$ and $N$ is achieved on one such line $L$. 

If $\dgm(M^L),\dgm(N^L)$ have a different number of points at infinity, then $d_B\left(\dgm(M^L),\dgm(N^L)\right)=\infty$. For any line $L'\sim_{\closure C_M\cup \closure C_N\cup \overline{\Omega}}L$, the points at infinity of $\dgm(M^L)$ (resp. $\dgm(N^L)$) are in bijection with those of $\dgm(M^{L'})$ (resp. $\dgm(N^{L'})$) via $\gamma_M$ (resp. $\gamma_N$) as defined in \Cref{lem:Gamma}. So, $\dgm(M^{L'})$ and $\dgm(N^{L'})$ will also have a different number of points at infinity, implying that $d_B\left(\dgm(M^{L'}),\dgm(N^{L'}\right)=\infty$ as well. By internal stability  \cite[Thm. 2]{Landi2018},  the bottleneck distance is equal to infinity on any line on the closure\footnote{Here, two lines $L$ and $L'$ endowed with the standard parametrization $u=\vec m \cdot t+b$ and $u=\vec m' \cdot t+b'$, respectively, are $\varepsilon$-distant if $\max\{\|\vec m-\vec m'\|_\infty, \|b-b'\|_\infty\}=\varepsilon$.} of the equivalence class of $L$. In this closure, we can always find a line passing through two points, one in $\closure C_M\cup \closure C_N \cup \Omega_P$ and the other one in $\closure C_M\cup \closure C_N \cup \closure\Omega$, implying the claim. 

If  $\dgm(M^L),\dgm(N^L)$ have the same number of points at infinity, then $d_B\left(\dgm(M^L),\dgm(N^L)\right)$ is realized by a matching.
%
%
Properties (P) and (Q') guarantee that  the optimal matching $\sigma^L$ and the pair of points $u,v\in \closure C_M\cup \closure C_N$ whose push onto $L$  gives maximal distance in that matching do not change if we vary the line $L$ remaining inside its equivalence class with respect to $\closure C_M\cup \closure C_N\cup \overline{\Omega}$, i.e. without hitting any new points in $\closure C_M\cup \closure C_N\cup\overline{\Omega}$.

Depending on the following exhaustive list of cases, we see that it is always possible to get a line $L'$ that gives no smaller weighted bottleneck distance and that passes through one point in $\closure C_M\cup \closure C_N \cup \Omega_P$ and another one in $\closure C_M\cup \closure C_N \cup \closure\Omega$. The list of cases to analyze is:

\begin{enumerate}
    \item[(i)] $L$ is already through one point in $\closure C_M\cup \closure C_N \cup \Omega_P$ and another one in $\closure C_M\cup \closure C_N \cup \closure\Omega$,
    \item[(ii)] $L$ is only through one point of $\closure C_M\cup \closure C_N\cup \Omega_P$,
    \item[(iii)] $L$ is only through one point at infinity of $\overline{\Omega}$,
    \item[(iv)] $L$ is  through no point of $\closure C_M\cup \closure C_N \cup \overline{\Omega}$.
\end{enumerate}

In case (i) there is nothing to prove. In case (ii), the claim is proved by applying  \Cref{lem:rotations}, i.e. by rotating the line in the appropriate direction about its point in $\closure C_M\cup \closure C_N\cup \Omega_P$ until we hit another point in $\closure C_M\cup \closure C_N\cup \overline{\Omega}\cup \{[0:1:0],[0:0:1]\}$. If the point we hit is in $\closure C_M\cup \closure C_N\cup \overline{\Omega}$, then we are done, otherwise the line is either vertical or horizontal, and we can apply \Cref{cor:no-vertical} and we are also done. In case (iii), the claim is proven by applying \Cref{lem:translations}, i.e.~by translating the line until we get a line hitting a point of $\closure C_M\cup \closure C_N\cup \Omega_P$ and still through the same point of $\overline{\Omega}_\infty$ as $L$. 
In case (iv), by \Cref{lem:translations}, there exists a direction of translation such that we can translate the line in that direction without decreasing the cost and eventually hit a point in $\closure C_M\cup \closure C_N \cup \Omega_P$. We are now in case (iii) from which we can reach the conclusion as explained above.
\end{proof}

\subsection{Connections to related works}\label{sec:rel-work3}

We highlight the relationship of the contents of \Cref{sec:claims} -\ref{sec:proof-main} to both the  topological approach papers \cite{Cerri2019geometrical, ethier2023geometry}, and the dual approach papers \cite{Kerber-Lesnick-Oudot2018, Bjerkevik2021}.

Within the topological approach, \Cref{lem:rotations-cost} corresponds to Theorem 4 from \cite{Cerri2019geometrical}; both results state when it is possible to continuously move a line through parameter space without decreasing the cost of any matching.  For us, this result is integral in proving that the matching distance is achieved as a maximum, rather than a supremum. 
Differently, in \cite{Cerri2019geometrical}, it is used to prove this realization as a maximum for the {\em coherent matching distance}, a modification of the matching distance restricted to the closure of an open connected subset of lines not passing through a switch point (rephrased in our language).

The use of vertical and horizontal lines in biparametric persistence, as
well as the extension of the bottleneck distance to these lines, is discussed in \cite{ethier2023geometry} Section 4, albeit
from a different perspective. Notably, \Cref{lem:same-limit-local}, \Cref{thm:vertical},
and \Cref{def:verticalcost} are related to the approach described in \cite{ethier2023geometry}. 

Passing to the comparison with \cite{ethier2023geometry}, which appeared on \texttt{arXiv} at the same time as this manuscript (cf. \cite{bapat2022computing}), we observe that the set of switch points $\Omega$ in \Cref{thm:main} is closely related to the set of special values introduced in \cite{ethier2023geometry}  (Definition 5.1). Lines passing through a switch point $\omega$ correspond to the lines $r(a, b)$ associated with a ``special value" $(a, b)$ in the topological case.

Moreover, the statement of \Cref{thm:main} corresponds to Theorem 5.4 in \cite{ethier2023geometry}, although the analogous theorem in the topological case does not exclude vertical and horizontal lines. The proofs of these theorems have similarities as well; a crucial step in proving Theorem 5.4 involves showing that the line $L$ can be continuously moved without reducing the optimal matching cost until it reaches a line of slope $0, 1, \infty,$ or a line associated with a special value. 

 We next highlight similarities and differences of the results of \Cref{sec:proof-main} and \cite{Kerber-Lesnick-Oudot2018}.

In \cite{Kerber-Lesnick-Oudot2018}, the line arrangement $\mathcal{A}_0$ consists of lines in the dual space which correspond to parameterizations of lines in the primal space through critical values.  This line arrangement is further refined to $\mathcal{A}$ (Definition 7), with the goal of obtaining a finite polyhedral decomposition of the dual space for which one may determine a ``consistent matching" for computing the weighted bottleneck distance for all lines in a given polyhedron (Theorem 9).  

Analogously, and in the primal space, the preliminary claims in \Cref{sec:background} are all used to show, in the proof of \Cref{thm:main}, that one may similarly find a ``consistent matching" for computing the weighted bottleneck distance for all lines in a given equivalence class, once the equivalence classes are generated by the set of critical values \emph{and} switch points. In particular, the refinement of the arrangement $\mathcal{A}_0$ to $\mathcal{A}$ mirrors the refinement of the equivalence relation achieved when adding the set of switch points to the set of critical values and the diagonal direction (the line $m=1$ in the dual space is also added to $\mathcal{A}$). More about the difference between our switch points and the lines of the arrangement $\mathcal{A}$ will be discussed in \Cref{sec:rel-work4}.

Following \cite{Kerber-Lesnick-Oudot2018}, they are able to show that, on any face in the line arrangement, the weighted bottleneck distance obtains a supremum at a boundary vertex of this face, or as the limit of an outer segment (Lemma 10). The latter involves considering horizontal lines (in the primal plane), which are excluded by the approach followed here.

In summary, the main differences between \cite[Lemma 10]{Kerber-Lesnick-Oudot2018} and our \Cref{thm:main}, apart from the use of point-line duality, are: 
\begin{itemize}
    \item \Cref{thm:main} states that the matching distance is achieved as a maximum, while in \cite[Lemma 10]{Kerber-Lesnick-Oudot2018} it is achieved as a supremum (however easy to compute).
    \item \Cref{thm:main} avoids considering horizontal lines, i.e.~points $(m,q)$ with $m=0$ in the dual space (vertical lines are implicitly not needed also in \cite{Kerber-Lesnick-Oudot2018}).
\end{itemize}

Likewise, in \cite{Bjerkevik2021} the initial line arrangement of \cite{Kerber-Lesnick-Oudot2018} is refined but based on a real parameter $\lambda>0$: this time the
crucial property is that within each region in the dual plane  determined by the line arrangement, the weighted bottleneck distance  is either $\le \lambda$  or $> \lambda$.

\section{Existence of the switch points}\label{sec:switch}


The main result of this section is \Cref{thm:PimpliesQ}, stating  that, if $M$ and $N$ satisfy Property (P), then also Property (Q), and hence (Q'), is satisfied.  It is important to note that this result is not only an existence result  but leads to a  constructive procedure to determine  the set of switch points $\Omega(C_M,C_N)$.

Switch points are needed when there exist two matched pairs $u,v$ and $w,x$ with $u,v,w,x\in\closure C_M\cup \closure C_N$ for which, 
\begin{itemize}
\item there is a line $L$ in a particular equivalence class via $\sim_{\closure C_M\cup \closure C_N}$ on which the cost of matching $u$ with $v$ equals the cost of matching $w$ with $x$, but
\item the cost of matching $u$ with $v$ {\em does not} equal the cost of matching $w$ with $x$ for all lines in that equivalence class.
\end{itemize}

Therefore, we use \Cref{lem:omegapts} to determine when, given a quadruple $u,v,w,x$ and equivalence class $E$ via $\sim_{\closure C_M\cup \closure C_N}$, it is possible to have a line $L\in E$ with the aforementioned equality.  As the following proposition shows, all such lines with this equality in $E$ pass through some point $\omega\in\mathbb{P}^2$ (see \Cref{fig:omega4set}).  Spanning over all quadruples and all equivalence classes via $\sim_{\closure C_M\cup \closure C_N},$ we obtain the set of switch points $\Omega(C_M,C_N).$

To avoid degenerate or trivial cases, we assume that $u\neq v$ and $w\neq x$.  This is because the matched pair $u,v$ with $u=v$ would indicate a simultaneous birth and death, i.e.~a point on the diagonal within a persistence diagram that yields a null contribution to the matching distance (similarly for $w=x$). However, it can be the case that one of the 4 points is repeated, such as $v=x.$  In this case, a switch point would indicate a line $L$ for which the cost of matching $u$ to $v$ is the same as matching $w$ to $v$ (see \Cref{fig:omega3set}).

\begin{figure}[H]
	\begin{center}
 		 \begin{minipage}[c]{0.45\textwidth}
		 \begin{center}
			\begin{tikzpicture}
				\input{./figures/OmegaFrom3Points.tex}
			\end{tikzpicture}
			\caption{An example of 3 points $u,w,v=x$ that generate an $\omega$ point. Here, $m$ denotes the midpoint of the line segment between $v$ and $x$.  The pushes of the values $u$ and $w$ to $L$ are denoted as red points, and the push of $v$ is $\omega$ itself.  For any line which passes through $\omega$, the cost of matching $u$ with $v$ is the same as the cost of matching $w$ with $x$. }
			\label{fig:omega3set}
		\end{center}
		\end{minipage}\hfill
 		\begin{minipage}[c]{0.45\textwidth}
		\begin{center}
			\begin{tikzpicture}
				\input{./figures/OmegaFrom4Points.tex}
			\end{tikzpicture}
			\caption{An example of 4 distinct points $u,w,v,x$ which generate an $\omega$ point.  Here, $m_1$ and $m_2$ denote the midpoints of the line segments between $u$ and $w$ and $v$ and $x$, respectively.  The pushes of the values $u,w,v,x$ to $L$ are denoted as red points.  Since the two red segments in the figure have equal length for any line that passes through $\omega$, on those lines the cost of matching $u$ with $v$ is the same as the cost of matching $w$ with $x$. }
			\label{fig:omega4set}
		\end{center}
		\end{minipage}
	\end{center}
\end{figure}

Recall the notation $p_L(u)$  for
the parameter value of the push of a point $u$ onto the line $L$,  and recall that, for every $u\in  \RR^2$, $u$ pushes rightwards if $A^L_u=\{2\}$,  upwards if $A^L_u=\{1\}$, and to itself if  $A^L_u=\{1,2\}$. 
%
In \Cref{lem:omegapts} we will also use the following notation used to indicate sets of lines which partition a particular configuration of points in a particular way. 

\begin{notation}\label{notation:A(U)}  Let $(u,v,w,x)$ be a quadruple of points, and let $A_u,A_v,A_w,A_x\subseteq \{1,2\}$.  Define 
\[
\begin{split}
 E(A_u,A_v,A_w,A_x):=&\big\{L\textrm{ is a line with positive slope }\big|\\& A_u^L=A_u, A_v^L=A_v, A_w^L=A_w, A_x^L=A_x\big\},   
\end{split}
\]
i.e.~$E(A_u,A_v,A_w,A_x)$ is the set of lines with positive slope for which $u,v,w,x$ are separated by these lines in the way described by the subsets $A_u,A_v,A_w,A_x$.  Note that $E(A_u,A_v,A_w,A_x)$ is necessarily a union of equivalence classes of lines mod $\sim_{\closure C_M\cup \closure C_N}$.
\end{notation}

We can now prove the following result.

\begin{lemma}\label{lem:omegapts}
Let $u, v, w, x \in \closure C_M\cup \closure C_N$.
Assume that $u \neq v$ and $w \neq x$, and that there are at least three distinct points among $u,v,w,x$.
Let $\delta, \eta \in \{1,2\}$\footnote{In fact, $\delta$ and $\eta$ are determined by the pairs $u,v$ and $w,x$ respectively. For example, if both points $u,v$ are in $\closure C_M$, then this pair represents matched points in $\dgm{M^L}$ to the diagonal for some $L\in E$, and $\delta=2$. If $u\in\closure C_M$ and $v\in\closure C_N,$ then this pair represents the cost of matching a point in $\dgm M^L$ to $\dgm N^L$, and $\delta=1$.  The value of $\eta$ may be computed analogously.  However, we prove the proposition for the slightly more general case of arbitrary $\delta, \eta \in \{1,2\}$.}.
Fix a choice for each $A_u, A_v, A_w, A_x\subseteq\{1,2\}$, and set $E:=E(A_u, A_v, A_w, A_x)$
For any line $L \in E$,
consider the real quantity

\[\Delta_L(u,v,w,x;\delta,\eta) := \frac{|p_L(u) - p_L(v)|}{\delta} - \frac{|p_L(w)- p_L(x)|}{\eta}.\]

If $\Delta_L(u,v,w,x;\delta,\eta) = 0$, then exactly one of the following two possibilities is true, depending on $E$:
\begin{enumerate}
\item[(i)] We have $\Delta_{L'}(u,v,w,x;\delta,\eta) = 0$ for every $L' \in E$.
\item[(ii)] There exists a point $\omega \in \mathbb{P}^2$ such that, for every $L' \in E$, we have $\Delta_{L'}(u,v,w,x;\delta,\eta) = 0$ if and only if $\omega \in L'$.
\end{enumerate}
\end{lemma}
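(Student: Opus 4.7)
The plan is to express the condition $\Delta_L = 0$ as an explicit algebraic equation in the line parameters $(\vec m, b_1)$, and then recognize that equation as the incidence condition of $L$ with a specific point of $\mathbb{P}^2$.

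First, I would use formulas \eqref{eq:param-1}--\eqref{eq:param-3} together with the push directions prescribed by $A(u,v,w,x)$ to write, for each $z \in \{u,v,w,x\}$,
\[p_L(z) = \alpha_z(\vec m) + \beta_z(\vec m)\,b_1,\]
where (using the standard normalization $b_2=-b_1$) we have $\beta_z(\vec m)=1/m_2$ when $A_z=\{2\}$ and $\beta_z(\vec m)=-1/m_1$ when $A_z=\{1\}$, and $\alpha_z$ is the corresponding constant term. Throughout $E$ these coefficients do not change form, since the push directions are fixed.

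Next, I would rewrite $\Delta_L=0$ as $\eta|p_L(u)-p_L(v)| = \delta|p_L(w)-p_L(x)|$, which splits into the two signed linear equations
\[F^{\varepsilon}(L):=\eta(p_L(u)-p_L(v)) - \varepsilon\,\delta(p_L(w)-p_L(x))=0,\qquad \varepsilon\in\{+1,-1\}.\]
Each $F^{\varepsilon}$ is affine in $b_1$ with coefficients that are rational in $\vec m$; say $F^{\varepsilon}=P^{\varepsilon}(\vec m)+Q^{\varepsilon}(\vec m)\,b_1$. I would now do a case analysis on whether $Q^{\varepsilon}$ vanishes identically: if both $P^{\varepsilon}$ and $Q^{\varepsilon}$ are identically zero for some admissible $\varepsilon$, then $F^{\varepsilon}$ vanishes on all of $E$ and we fall into case (1) of the statement. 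Otherwise, at each $\vec m$ the equation $F^{\varepsilon}=0$ forces a unique $b_1$, giving a one-parameter family of lines.

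The crux is to recognize this family as the pencil of lines through a single point. Using the direct computation from the normalization, a line $L$ passes through a proper point $\omega=(\omega_1,\omega_2)\in\RR^2$ exactly when
\[b_1 = \frac{m_2\omega_1-m_1\omega_2}{m_1+m_2},\]
while $L$ has direction $[0:\nu_1:\nu_2]\in\ell_\infty$ exactly when $\vec m\parallel(\nu_1,\nu_2)$. I would solve $F^{\varepsilon}=0$ for $b_1$ and match the resulting rational function of $\vec m$ against one of these two templates: when $Q^{\varepsilon}$ has the factor $(m_1+m_2)/(m_1m_2)$ (which happens precisely when exactly one of the two pairs $\{u,v\},\{w,x\}$ pushes in mixed directions), the solution matches the proper-point template and yields an explicit $\omega\in\RR^2$; when $Q^{\varepsilon}\equiv 0$ but $P^{\varepsilon}$ only vanishes on a specific slope, $\omega$ is the corresponding point at infinity. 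The coordinates of $\omega$ read off from this identification (and geometrically recover the pictures in Figures illustrating the three-point and four-point cases).

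The main obstacle is to handle both signs $\varepsilon=\pm 1$ simultaneously and show that the set $\{L\in E:\Delta_L=0\}$ really collapses to a single pencil. The point is that, within a region of $E$ where the signs of $p_L(u)-p_L(v)$ and $p_L(w)-p_L(x)$ are fixed, only one of the two equations $F^{+}=0$ or $F^{-}=0$ encodes the correct removal of the absolute values; the other sign either contradicts the sign pattern or reproduces the same incidence condition, the latter happening exactly in the degenerate configurations forbidden by the hypothesis that at least three of $u,v,w,x$ are distinct. Checking this sign-compatibility case by case (distinguishing, as in the paragraph above, how many of the four points push rightward versus upward) is the main bookkeeping step, after which the explicit formulas from the previous paragraph produce the unique $\omega\in\mathbb{P}^2$ and establish the ``if and only if'' statement.
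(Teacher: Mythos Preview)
Your overall plan coincides with the paper's: both substitute the push formulas dictated by $A(u,v,w,x)$ into $\Delta_L$, obtain an expression of the shape
\[
\Delta_L = \alpha_1\cdot\frac{1}{m_1}+\alpha_2\cdot\frac{1}{m_2}+\beta\cdot\frac{b_1}{m_1}-\beta\cdot\frac{b_2}{m_2},
\]
and then read off $\omega=[\beta:-\alpha_1:\alpha_2]$ (a point at infinity when $\beta=0$). The paper carries this out by an explicit four-way split on how many of $u,v,w,x$ lie on each side of $L$.

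The gap in your outline is the treatment of the absolute values. You resolve them with a single sign $\varepsilon\in\{+1,-1\}$ and then assert that within $E$ only one choice of $\varepsilon$ survives, the other ``contradicting the sign pattern'' except in degenerate configurations. This is not correct. When, for instance, $u,v,w$ all push in the same direction while $x$ pushes in the other (the paper's case~2), the sign of $p_L(u)-p_L(v)$ is a fixed coordinate difference, but the sign of $p_L(w)-p_L(x)$ genuinely varies across $E$; both $F^{+}=0$ and $F^{-}=0$ then have solutions inside $E$, producing two \emph{distinct} points $\omega_a\neq\omega_b$ (compare the paper's \eqref{eq:omega2a} and \eqref{eq:omega2b}). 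So the zero set of $\Delta_L$ in $E$ need not be a single pencil, and your reduction to one $\varepsilon$ fails there. The paper does not try to argue one sign away: within each push-configuration it further sub-cases on the sign of the mixed difference (e.g.\ $p_L(w)\ge p_L(x)$ versus $p_L(x)\ge p_L(w)$) and simply records the $\omega$ for each sub-case separately. If you drop the claim that only one $F^{\varepsilon}$ matters and instead keep both, extracting an $\omega$ from each (and checking, as the paper does, that lines through that $\omega$ automatically satisfy the corresponding sign condition), your argument goes through and matches the paper's.
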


In other words, \Cref{lem:omegapts} says that, for fixed $A_u, A_v, A_w, A_x$, either the sign of $\Delta_L$ is constant on the equivalence classes of lines $L$ onto which the points  $u,v,w,x$ push as prescribed by the $A_i$'s, or otherwise the lines where there is a switch of the sign of $\Delta_L$ belong to a pencil  though a point $\omega \in \mathbb{P}^2$.

\begin{proof}

Consider some $L \in E$, parameterized as $\vec{m}s + b$.
The four points can be in various different positions with respect to $L$ and we want to find the lines for which the quantity
\[\Delta_L(u,v,w,x;\delta,\eta) :=  \frac{|p_L(u) - p_L(v)|}{\delta}- \frac{|p_L(w) - p_L(x)|}{\eta}\]
vanishes.  As $ \frac{|p_L(u) - p_L(v)|}{\delta}$  represents the cost of pairing $u$ with $v$ and $\frac{|p_L(w) - p_L(x)|}{\eta}$ represents the cost of pairing $w$ with $x$, respectively, this is exactly the set of lines on which these costs are equal.

Recall that saying that a point is on one side of a line includes the possibility that the point is on the line; saying that a point is \emph{strictly} on one side of the line excludes the possibility that the point is on the line.

Without loss of generality, we have the following possibilities.

\begin{enumerate}
    \item\label{case1} There exists $i\in\{1,2\}$ such that $i\in A_p$ for all $p\in\{u,v,w,x\}$. That is, all four points lie on one side of $L$ or, 
    \item\label{case2} There exists $i\in\{1,2\}$ such that $i\in A_p$ for exactly 3 of $p\in\{u,v,w,x\}$. That is, three points lie on one side of $L$, and the fourth lies strictly on the other side.
    \item\label{case3} Two paired points have $A_p=\{1\}$ for both points in the pair, and the other two paired points have $A_p=\{2\}$ for both points in the pair.  This is the case when two paired points lie strictly on one side of $L$, and the other two paired points lie strictly on the other side of $L$. 
    \item\label{case4} Two unpaired points have $A_p=\{1\}$, and the other two unpaired points have $A_p=\{2\}$. This is the case when two paired points lie strictly on opposite sides of $L$, and the other two paired points also lie strictly on opposite sides of $L$.
\end{enumerate}

We now address each possibility listed in  \cref{case1,case2,case3,case4} recalling \Cref{eq:param-1,eq:param-2,eq:param-3}:
\begin{enumerate}
    \item 
    Suppose, without loss of generality, that 
    \[p_L(u) \geq p_L(v) \text{ and } p_L(w) \geq p_L(x).\]
    Then the value of $\Delta_L$ is given by:
    \begin{align}\label{eq:omega1}
      &\Delta_L(u,v,w,x;\delta,\eta) = \frac{1}{\delta}\left(\frac{u_i}{m_i} - \frac{v_i}{m_i}\right) - \frac{1}{\eta}\left(\frac{w_i}{m_i} - \frac{x_i}{m_i}\right) \nonumber\\
      &=\left(\frac{u_i}{\delta} + \frac{-v_i}{\delta} + \frac{-w_i}{\eta} + \frac{x_i}{\eta}\right)\frac{1}{m_i},
    \end{align}
    with $i=1$ (resp. $i=2$) when all the points are to the right (resp. left) of $L$.
    
    \item Suppose without loss of generality that $u$, $v$, $w$ lie to the left of $L$ and  $x$ lies strictly to the right. Suppose also, without loss of generality, that $p_L(u) \geq p_L(v)$.
      
      If $p_L(w) \geq p_L(x)$, then
    \begin{align}\label{eq:omega2a}
    &\Delta_L(u,v,w,x;\delta,\eta) = \frac{u_2}{\delta m_2} - \frac{v_2}{\delta m_2} - \frac{(w_2 - b_2)}{\eta m_2} + \frac{(x_1 - b_1)}{\eta m_1} \nonumber\\
    &=\left(\frac{x_1}{\eta}\right)\frac{1}{m_1} +
    \left(\frac{u_2}{\delta} - \frac{v_{2}}{\delta} - \frac{w_2}{\eta}\right)\frac{1}{m_2} +
    \left(\frac{-1}{\eta}\right)\frac{b_1}{m_1} +
    \left(\frac{1}{\eta}\right)\frac{b_2}{m_2}.
    \end{align}
    
    If $p_L(x) \geq p_L(w)$, then
    \begin{align}\label{eq:omega2b}
      &\Delta_L(u,v,w,x;\delta,\eta) = \frac{u_2}{\delta m_2} - \frac{v_2}{\delta m_2} + \frac{(w_2 - b_2)}{\eta m_2} - \frac{(x_1 - b_1)}{\eta m_1} \nonumber\\
      &=\left(\frac{-x_{1}}{\eta}\right)\frac{1}{m_1} +
      \left(\frac{u_{2}}{\delta} - \frac{v_{2}}{\delta} + \frac{w_{2}}{\eta}\right)\frac{1}{m_2} +
      \left(\frac{1}{\eta}\right)\frac{b_1}{m_1} +
      \left(\frac{-1}{\eta}\right)\frac{b_2}{m_2}
    \end{align}

The case when   $u$, $v$, $w$ lie to the right of $L$ and  $x$ lies strictly to the left can be handled similarly.
    
    \item Suppose, without loss of generality, that $u$ and $v$ lie strictly to the left of $L$ and that $w$ and $x$ lie strictly to the right of $L$.
    Suppose also, without loss of generality, that $p_L(u) \geq p_L(v)$ and $p_L(w) \geq p_L(x)$.
    Then
    \begin{align}\label{eq:omega3}
      &\Delta_L(u,v,w,x;\delta,\eta) = \frac{u_2}{\delta m_2} - \frac{v_2}{\delta m_2} - \frac{w_1}{\eta m_1} + \frac{x_1}{\eta m_1} \nonumber\\
      &=\left(\frac{-w_{1}}{\eta} + \frac{x_{1}}{\eta}\right)\frac{1}{m_1} +
        \left(\frac{u_{2}}{\delta} - \frac{v_{2}}{\delta}\right)\frac{1}{m_2}.
    \end{align}
    
    \item Suppose,  without loss of generality, that $u$ and $w$ lie strictly to the left of $L$ and  that $v$ and $x$ lie strictly to the right of $L$. 
    Also suppose, for brevity, that $p_L(u) \geq p_L(v)$. The case when $p_L(u) \leq p_L(v)$ can be treated similarly and produces formulas for $\Delta_L(u,v,w,x;\delta,\eta)$ that differ only by the sign.
    If $p_L(w) \geq p_L(x)$, then
    \begin{align}\label{eq:omega4a}
      &\Delta_L(u,v,w,x;\delta,\eta) = \frac{u_2 - b_2}{\delta m_2} - \frac{v_1 - b_1}{\delta m_1} - \frac{w_2 - b_2}{\eta m_2} + \frac{x_1 - b_1}{\eta m_1}. \nonumber\\
      &=\left(\frac{-v_{1}}{\delta} + \frac{x_{1}}{\eta}\right)\frac{1}{m_1} +
        \left(\frac{u_{2}}{\delta} + \frac{-w_{2}}{\eta}\right)\frac{1}{m_2} +
        \left(\frac{1}{\delta} + \frac{-1}{\eta}\right)\frac{b_1}{m_1} +
        \left(\frac{-1}{\delta} + \frac{1}{\eta}\right)\frac{b_2}{m_2}.
    \end{align}
    If $p_L(x) \geq p_L(w)$, then
    \begin{align}\label{eq:omega4b}
      &\Delta_L(u,v,w,x;\delta,\eta) = \frac{u_2 - b_2}{\delta m_2} - \frac{v_1 - b_1}{\delta m_1} + \frac{w_2 - b_2}{\eta m_2} - \frac{x_1 - b_1}{\eta m_1} \nonumber\\
      &=\left(\frac{-v_{1}}{\delta} + \frac{-x_{1}}{\eta}\right)\frac{1}{m_1} +
        \left(\frac{u_{2}}{\delta} + \frac{w_{2}}{\eta}\right)\frac{1}{m_2} +
        \left(\frac{1}{\delta} + \frac{1}{\eta}\right)\frac{b_1}{m_1} +
        \left(\frac{-1}{\delta} + \frac{-1}{\eta}\right)\frac{b_2}{m_2}.
    \end{align}
\end{enumerate}

Observe that in all \Cref{eq:omega1,eq:omega2a,eq:omega2b,eq:omega3,eq:omega4a,eq:omega4b}, the expression for $\Delta_L(u,v,w,x;\delta,\eta)$ has the form 
\[\Delta_L(u,v,w,x;\delta,\eta) = \alpha_1 \cdot \frac{1}{m_1} + \alpha_2 \cdot \frac{1}{m_2} + \beta \cdot\frac{b_1}{m_1} - \beta \cdot \frac{b_2}{m_2},\]
where $\alpha_1$, $\alpha_2$, and $\beta$ are constants that depend only on $u,v,w,x$ and $\delta,\eta$ and not on the line $L$.

We now consider the following exhaustive cases:
\begin{itemize}
    \item $\beta =0$ and at least one of $\alpha_1,\alpha_2$ equal $0$,
    \item $\beta = 0$ and $\alpha_1,\alpha_2\neq 0$,
    \item $\beta\neq 0.$
\end{itemize}

Suppose first that $\beta$ is zero and one of $\alpha_1$ and $\alpha_2$ is zero. 
Assume, without loss of generality,  that $\alpha_1 = 0$.
Since $m_2 > 0$, then the sign of $\Delta_L(u,v,w,x;\delta,\eta)$ is the same as the sign of $\alpha_2$.
In other words, either $\Delta_L(u,v,w,x;\delta,\eta) = 0$ for every line $L \in E$, or it is positive for every line $L \in E$, or negative for every line $L \in E$. Thus, this case gives rise to no switch point $\omega$.

This implies that \Cref{eq:omega1} generates no switch points, as in that equation $\beta=0$ and one of $\alpha_1$ and $\alpha_2$ is zero.  We can conclude that when all points $u,v,w,x$ lie on the same side of any line $L$ in $E$, no switch point is created.

In \Cref{eq:omega3}, it is always the case that $\beta=0.$  If it is also the case that $x_1=w_1$ or $u_2=v_2$, then either $\alpha_1$ or $\alpha_2$ is zero.  If we assume that $x_1=w_1$, for $\Delta_L$ to be zero, it must also be the case that $u_2=v_2.$  This implies that $c_L(u,v)=c_L(w,x)=0$ for all lines $L\in E,$ and therefore $\Delta_L(u,v,w,x;\delta,\eta) = 0$ for every line $L \in E$.  (An identical argument works for $u_2=v_2$ as well.)  Thus, no switch point is created.

It is also possible to have $\beta=0$ in \Cref{eq:omega4a}, when $\delta=\eta.$  If, additionally, $x_1=v_1$ or $u_2=w_2$, then either $\alpha_1$ or $\alpha_2$ is zero. However, if $x_1=v_1$, then, for $\Delta_L$ to be zero, it must also be the case that $u_2=w_2.$  This implies that $c_L(u,v)=c_L(w,x)$ for all lines $L\in E,$ and therefore $\Delta_L(u,v,w,x;\delta,\eta) = 0$ for every line $L \in E$.  (An identical argument works for $u_2=w_2$ as well.) So no switch point is created.  To summarize, no switch points are created for:
\begin{itemize}
    \item \Cref{eq:omega1}
    \item \Cref{eq:omega3}, when $x_1=w_1$ or $u_2=v_2,$ nor
    \item \Cref{eq:omega4a}, when $\delta=\eta$ and $x_1=v_1$ or $u_2=w_2$.
\end{itemize}

Next suppose that $\beta = 0$ and $\alpha_1 \neq 0$ and $\alpha_2 \neq 0$.
In this case, the quantity $\Delta_L(u,v,w,x;\delta,\eta)$ is zero if and only if 
\[m_2 \cdot \alpha_1 = - m_1 \cdot \alpha_2.\]
This is a condition on the slope of the line, which states that $L$ must have a fixed slope.  When considering the line $L$ in $\mathbb{P}^2$, this means that $L$ passes through the point at infinity $\omega=[0:-\alpha_1:\alpha_2]$. Conversely, it is easy to check that any line $L \in E$ with this slope has $\Delta_L(u,v,w,x;\delta,\eta) = 0$.

It is only possible for $\beta=0$ in \Cref{eq:omega3} and in \Cref{eq:omega4a}, when $\delta=\eta$.  In both of these, to ensure that $\alpha_1 \neq 0$ and $\alpha_2 \neq 0$, it must be that $x_1\neq w_1$ and $u_2\neq v_2.$ 

Using this formula, we obtain the following $\omega$ points on the line at infinity:

\begin{itemize}
    \item From \Cref{eq:omega3}, when $x_1\neq w_1$ and $u_2\neq v_2$:
    \begin{equation}\label{eq:omega-ter}
    \omega=\left[0:\delta(w_1-x_1):\eta(u_2-v_2)\right].
    \end{equation}

    \item From \Cref{eq:omega4a}, when $\delta=\eta$ and $x_1\neq v_1$ and $u_2\neq w_2$:
	  \begin{equation}\label{eq:omega-quater}     
     \omega=\left[0: v_1- x_1: u_2 - w_2\right]. 
     \end{equation}
    
    \end{itemize}
 
Finally, suppose that $\beta \neq 0$.
In this case, $\Delta_L(u,v,w,x;\delta,\eta) = 0$ if and only if 
\[\frac{\alpha_1}{\beta} \cdot \frac{1}{m_1} + \frac{\alpha_2}{\beta}\cdot \frac{1}{m_2} + \frac{b_1}{m_1} - \frac{b_2}{m_2} = 0.
\] 
Again, viewing the lines $L$ in $\mathbb{P}^2$, it is easy to check that this equation is satisfied  if and only if the line $L\in E$ passes through the point 
\[\omega = \left[\beta:-\alpha_1:\alpha_2\right].\]

When we require that $\beta\neq0$, we only need to consider \Cref{eq:omega2a,eq:omega2b,eq:omega4a} (when $\delta\neq\eta$), and \Cref{eq:omega4b}. 

Using this formula, we obtain the following proper $\omega$ points:
\begin{itemize}
    \item  From \Cref{eq:omega2a}:
    \begin{equation}\label{eq:omega-1}
    \omega=\left[\delta:\delta x_1:\eta(v_2-u_2)+\delta w_2\right].    
   \end{equation}
    \item From \Cref{eq:omega2a}, altered to consider the case when $u,v,$ and $w$ lie to the right of lines $L\in E$, and $x$ to the left:
    \begin{equation}\label{eq:omega-2}
    \omega=\left[\delta:\eta(v_1-u_1)+\delta w_1:\delta x_2\right].
    \end{equation}
    
    \item From \Cref{eq:omega2b}:
    \begin{equation}\label{eq:omega-3}
	\omega=\left[\delta:\delta x_1:\eta(u_2-v_2)+\delta w_2\right].
      \end{equation}
     
   \item From \Cref{eq:omega2b}, altered to consider the case when $u,v,$ and $w$ lie to the right of lines $L\in E$, and $x$ to the left:
 \begin{equation}\label{eq:omega-4}
 \omega=\left[\delta:\eta(u_1-v_1)+\delta w_1:\delta x_2\right].
  \end{equation}

    \item From \Cref{eq:omega4a} when $\delta\neq\eta$:
	\begin{equation}\label{eq:omega-5}    
    \omega=\left[\eta-\delta:\eta v_1 - \delta x_1:\eta u_2 - \delta w_2\right].
     \end{equation}
    
    \item From \Cref{eq:omega4b}:
	\begin{equation}\label{eq:omega-6}    
    \omega=\left[\eta+\delta:\delta x_1+\eta v_1:\delta w_2+\eta u_2\right].
     \end{equation}
     
    \end{itemize}

\end{proof}

\begin{remark}\label{rem:no-need-closure}
\Cref{lem:omegapts} describes how to construct certain $\omega$ points based on a choice of a quadruple $u,v,w,x \in \overline{C}_M \cup \overline{C}_N$ and sets $A_p$ for $p \in \{u,v,w,x\}$.
In fact, the same $\omega$ points can be obtained if we choose $\{u,v,w,x\}$ from the more restricted set $C_M \cup C_N$.

For example, let $u \in (\overline{C}_M \cup \overline{C}_N \setminus (C_M \cup C_N)$. 
Then $u$ is the least upper bound either of two points in $C_M$, or two points in $C_N$.
Suppose, without loss of generality, that $u', u'' \in C_M$ such that $u = ((u')_1,(u'')_2)$.
If $L$ is a line such that the push direction of $u$ onto $L$ is upward, then $p_L(u) = p_L(u')$.
Similarly, if $L$ is a line such that the push direction of $u$ onto $L$ is rightward, then $p_L(u) = p_L(u'')$.
Since the formula for $\omega$ only depends on the direction in which $u$ pushes, the value of $\omega$ does not change.
\end{remark}

The proof of next \Cref{thm:PimpliesQ} uses \Cref{lem:omegapts} to generate a set $\Omega(C_M,C_N)\subset\mathbb{P}^2$ of switch points for which Property (Q') holds.  We refer the reader to \cite{brooks2023switch} for how to turn the ideas of this proof into an algorithm that computes $\Omega(C_M,C_N)$.

\begin{theorem}\label{thm:PimpliesQ}
 Given two 2-parameter persistence modules $M,N$ satisfying Property (P), there exists a set of  switch points $\Omega(C_M,C_N)$ ensuring that $M$ and $N$ also satisfy Property (Q), and hence (Q').
\end{theorem}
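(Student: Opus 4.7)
The plan is to construct $\Omega(M,N)$ explicitly from the candidate points produced by \Cref{lem:omegapts}, verify Property (Q) via a continuity-and-connectedness argument inside each equivalence class, and then deduce (Q$'$) from \Cref{prop:Q}.

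First, I would define $\Omega(M,N)$ as follows: for every ordered quadruple $(u,v,w,x)$ of points in the finite set $\closure C_M \cup \closure C_N$ satisfying the hypotheses of \Cref{lem:omegapts} (namely $u\neq v$, $w\neq x$, and at least three distinct entries), for every assignment $A(u,v,w,x)$ of push-direction subsets, and for every $(\delta,\eta)\in\{1,2\}^2$, I apply \Cref{lem:omegapts} and place its Case~2 output $\omega\in\mathbb{P}^2$ (when it arises) into $\Omega(M,N)$. To cover the degenerate quadruples with only two distinct entries that fall outside the lemma's hypothesis (for instance $u=v$, $w\neq x$, $u\in\{w,x\}$), I would add directly-computed points of the form $(w_1,x_2)$ or $(x_1,w_2)$ coming from the condition $|p_L(w)-p_L(x)|=0$ under a fixed choice of push directions for $w$ and $x$; these are exactly the lines through such points. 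Because $\closure C_M\cup\closure C_N$ is finite by Property (P) and the remaining parameter choices come from finite sets, the resulting $\Omega(M,N)$ is finite.

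Next, I would verify Property (Q). Fix any equivalence class $[L]$ of $\sim_{\closure C_M\cup\closure C_N\cup\Omega(M,N)}$ and any tuple $(u,v,w,x;\delta,\eta)$ with entries in $\closure C_M\cup\closure C_N$. All lines in $[L]$ share the same reciprocal position with respect to every point of $\closure C_M\cup\closure C_N$, hence the same push assignment $A(u,v,w,x)$, so $[L]$ is contained in a single $E(A(u,v,w,x))$. \Cref{lem:omegapts} (or its degenerate analogue) then gives two alternatives: either $\Delta_{L'}$ is identically zero on $E(A(u,v,w,x))$, in which case the sign is constantly null on $[L]$; or there is a point $\omega\in\Omega(M,N)$ such that, on $E(A(u,v,w,x))$, $\Delta_{L'}=0$ if and only if $L'$ passes through $\omega$. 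In the second alternative, because $\omega\in\Omega(M,N)$, all lines in $[L]$ share the same reciprocal position with respect to $\omega$: either every line of $[L]$ passes through $\omega$ (sign null throughout) or none does. When none does, $\Delta_{L'}$ is nowhere zero on $[L]$, so by continuity of $\Delta_{L'}$ in the line parameters $(\vec m,b)$, combined with the connectedness of $[L]$ in the 2-dimensional standardly-normalized space of positive-slope lines, its sign is forced to be constant on $[L]$. This establishes Property (Q), and Property (Q$'$) then follows from \Cref{prop:Q}.

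The main obstacle is completing the construction of $\Omega(M,N)$ for the degenerate quadruples not covered by \Cref{lem:omegapts}: although the formulas collapse substantially once one pair is identified, the corresponding switch points must still be enumerated and added so that the case analysis in the verification step extends uniformly. A secondary technical point is the justification that every equivalence class of $\sim_{\closure C_M\cup\closure C_N\cup\Omega(M,N)}$ is connected as a subset of the line parameter space; this follows from the observation that each class is carved out by finitely many reciprocal-position conditions, each of which is convex in a natural affine chart on the space of positive-slope lines, so their intersection is connected and the continuity argument at the last step applies without further care.
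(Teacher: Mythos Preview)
Your proposal is correct and follows essentially the same strategy as the paper: assemble $\Omega(M,N)$ from the switch points produced by \Cref{lem:omegapts} over all admissible quadruples and push-direction assignments, then verify Property~(Q) by a continuity argument inside each equivalence class (the paper phrases this as an Intermediate-Value-Theorem argument along a translation/rotation path from $L$ to $L'$, which is exactly your connectedness claim in different clothing), and finally invoke \Cref{prop:Q}. Your additional bookkeeping for the degenerate quadruples with only two distinct entries is not in the paper---it simply restricts to the quadruples covered by \Cref{lem:omegapts}---but this omission is harmless for the intended application to Property~(Q$'$), since a pair with $u=v$ contributes zero cost and never realises an optimal matching.
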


\begin{proof}
As usual, let $C_M$ and $C_N$ denote the set of critical values of $M$ and $N$, respectively, as guaranteed by Property (P), and let $\closure C_M$ and $\closure C_N$ be their closure with respect to the least upper bound.

For each quadruple $u,v,w,x\in \closure C_M\cup \closure C_N$ (or simply each quadruple in $C_M\cup C_N$, thanks to \Cref{rem:no-need-closure}) and choice of $A_u, A_v, A_w, A_x\subseteq\{1,2\}$ satisfying the conditions of \Cref{lem:omegapts}, we may obtain a switch point $\omega$, either proper or at infinity.  Iterating over all such quadruples and possible push directions for each quadruple, we obtain a finite set of points in $\mathbb{P}^2$, which we denote $\Omega(C_M,C_N)$, or briefly $\Omega$.  

We now show that the set $\Omega$ satisfies Property (Q); then, by \Cref{prop:Q}, it satisfies Property (Q').  

Let $L$ and $L'$ be two lines such that $L\sim_{\closure C_M\cup \closure C_N\cup{\Omega}}L'$, and let $u,v,w,x$ be any quadruple, with at least three distinct points, in $\closure C_M\cup \closure C_N$. We want to prove that  the signs of $\Delta_L(u,v,w,x;\delta,\eta)$ and  $\Delta_{L'}(u,v,w,x;\delta,\eta)$ agree. 

First, by \Cref{lem:omegapts}, if $\Delta_L(u,v,w,x;\delta,\eta)=0$, then either $\Delta_{L'}(u,v,w,x;\delta,\eta)=0$ for every $L'$ in the same equivalence class, or all lines in the same equivalence class  contain a switch point  $\omega\in\Omega$.
So, by the definition of the equivalence relation, for any $L'\sim_{\closure C_M\cup \closure C_N\cup{\Omega}}L$, it must also be the case that $\omega\in L'$.  By \Cref{lem:omegapts}, this implies that $\Delta_{L'}(u,v,w,x;\delta,\eta)=0$ as well.

Next suppose, without loss of generality, that 
\[\Delta_L(u,v,w,x;\delta,\eta)>0 \textrm{ and } \Delta_{L'}(u,v,w,x;\delta,\eta)<0.\]
This implies that neither $L$ nor $L'$ contain the point $\omega$ corresponding to the quadruple $u,v,w,x$ and to the choice of $A_u, A_v, A_w, A_x\subseteq\{1,2\}$ which generate $\omega$. 

Note that, for fixed $\delta$ and $\eta$ and within an equivalence class with respect to $\closure C_M\cup \closure C_N\cup{\Omega}$, the quantity $\Delta_L(u,v,w,x;\delta,\eta)$ is a continuous function with respect to translation or rotation. Moreover, $L'$ may be obtained from $L$ by a translation and/or rotation which is always in the same equivalence class with respect to $\closure C_M\cup \closure C_N\cup\Omega$. 

Therefore, $\Delta_L(u,v,w,x;\delta,\eta)$ is a continuous function of $L$ which changes sign within an equivalence class with respect to $\closure C_M\cup \closure C_N\cup{\Omega}$, and by the Intermediate Value Theorem, there must be a line $L''$, equivalent to both $L$ and $L'$ via $\sim_{\closure C_M\cup \closure C_N\cup{\Omega}}$ such that $\Delta_{L''}(u,v,w,x)=0$.  This implies that $\omega\in L''$, and since $L''\sim_{\closure C_M\cup \closure C_N\cup{\Omega}} L'$ and $L''\sim_{\closure C_M\cup \closure C_N\cup\Omega} L$, then $\omega\in L'\cap L$. 

This is a contradiction, and therefore, the sign of $\Delta_L(u,v,w,x;\delta,\eta)$ is the same as the sign of $\Delta_{L'}(u,v,w,x;\delta,\eta)$ for all lines $L\sim_{\closure C_M\cup \closure C_N\cup\Omega}L'$.  This must be true for all quadruples in $\closure C_M\cup \closure C_N$, so Property (Q) holds for $\Omega.$
\end{proof}

\subsection{Connections to related works}\label{sec:rel-work4}

The need to consider switch points is ubiquitous in the literature about the computation of the matching distance. To the best of our knowledge, however, switch points' coordinates are always left implicit and never worked out. See, for example, \cite[Definition 5.1]{ethier2023geometry} and equations (1) in \cite{Kerber-Lesnick-Oudot2018}. 

Working out the explicit formulas, as done in the proof of \Cref{lem:omegapts}, we found it convenient to pass to the projective plane because some switch points belong to the line at infinity. In other words, a switching may happen when lines pass through a given direction rather than a given point. In the dual plane, this corresponds to a point passing through a vertical line. To the best of our understanding, this case is not singled out in the arrangement $\mathcal{A}$ of \cite{Kerber-Lesnick-Oudot2018}. The only vertical line explicitly discussed in the dual plane is the one corresponding to the diagonal direction in the primal plane.  

Interestingly, vertical lines in the dual plane play a role in \cite{Bjerkevik2021}: The change through the value $\lambda$ of the weighted bottleneck distance is shown to happen at a dual point on a line in dual space that can be
vertical and therefore does not have a corresponding point in primal space (if not at infinity).


\section{The matching distance algorithm}\label{sec:match_dist_alg}

The algorithm to compute the matching distance between the persistence modules $M$ and $N$  based on the \Cref{thm:main} can be now described by the following main steps:

\begin{enumerate}
    \item Determine the sets of critical values $C_M$ and $C_N$ for $M$ and $N$, respectively (for example, using the results of \cite{Bapat2022});
    \item Identify the closures $\closure C_M$ and $\closure C_N$;
    \item Compute the set of switch-points $\Omega=\Omega(C_M,C_N)$ guaranteed by \Cref{thm:main};
    \item For each line $L$ with positive slope that passes through two distinct points, one in $\closure C_M\cup \closure C_N \cup \Omega_P$ and the other one in $\closure C_M\cup \closure C_N \cup \closure\Omega$ do:
    \begin{enumerate}
    \item Compute the persistence diagrams $\dgm(M^L)$, $\dgm(N^L)$ of the restrictions of $M$ and $N$ to that line $L$;
    \item Compute their weighted bottleneck distance $\hat m^L d_B(\dgm(M^L),\dgm(N^L))$;
    \end{enumerate}
    \item Output the maximum of the weighted bottleneck distances computed in Step 4(b).   
\end{enumerate}

To analyze the complexity of such an algorithm, let $m=|C_M|$, $n=|C_N|$, $h=|\Omega|$. For simplicity, we also set $r=m+n$ and $p=m+n+h$. In the worst case,  the order of $|\Omega|$ is   $O(r^4)$  even if in practical cases this number is much smaller \cite{brooks2023switch}.  Thus, in the worst case, the order of $p$ is $O(r^4)$.

The  time complexity  of computing the least upper bound of two points of $\RR^2$ is $O(1)$, so doing it for all pairs of points of a set with $n$ points is $O(n^2)$. From this we can deduce the complexity of step 2 in terms of $m$ and $n$ is $O(m^2+n^2)$.

The time complexity of computing all the possible lines through two points in a set of $p$ points is $O(p^2)=O(r^8)$.

The time complexity of computing the persistence diagram of the restriction of $M$ and $N$ to a line is $O(m^\mu+n^\mu)$, where $\mu\approx 2.37$ is the matrix multiplication constant. That of computing the bottleneck distance in a set with $s$ points of the plane is $O(s^{\mu/2}\log s)$ \cite{Katz2023}. For us, $s$ can be assumed to be in the order of $m+n$, yielding $O((m+n)^{\mu/2}\log(m+n))$ for the cost of computing the bottleneck distance for a line. Thus, the sum of the costs of steps (a) and (b) can be evaluated to be no more than $O(r^\mu)$.

Because this has to be repeated for each line, whose number is $O(r^8)$, we get the total time complexity for step 4 bounded by $O(r^{8+\mu})$, which dominates that of the previous steps. In conclusion, the time complexity of our method is roughly $O(r^{11})$.

\subsection{Connections to related works}

The algorithm we streamlined above mirrors, in the primal plane, the algorithm of \cite{Kerber-Lesnick-Oudot2018}. The differences are that we manage not to consider horizontal lines, and that we have explicit formulas for switch points. Computationally, the running time agrees with that of \cite{lesnick2022computing} and is higher than that of \cite{Bjerkevik2021} which is $O(r^5\log^3r)$.

We note that the matching distance computation is parallelizable, with bottleneck distance computations carried out in parallel on different lines. Additionally, the computation of candidate switch points is also parallelizable based on the cases described in the proof of \Cref{lem:omegapts}, \cref{case2} through \cref{case4}.   
 Algorithms to prune the set of candidate switch points are detailed in \cite{brooks2023switch}.

\section{Conclusions}\label{sec:conclusions}

The main contributions of this paper are:  
\begin{itemize}

    \item Theoretical results for computing the matching distance between two $2$-parameter persistence modules, in a combinatorial setting and while staying in the primal plane;
    
     \item New definitions for the costs (w.r.t.~computing the matching distance) of horizontal and vertical lines of a $2$-parameter persistence module; and
     
    \item The identification of the parameter space of a $2$-parameter persistence module as a subset of the projective plane.

\end{itemize}


We expand on these bullets in order.  First, one advantage of the theoretical framework developed in this paper is that it allows us to formulate explicit ways of computing the switch-points from the critical values of two $2$-parameter persistence modules (see \cite{brooks2023switch}). This allows us to prove that the matching distance in two dimensions as the maximum over a finite set of lines, as guaranteed by \Cref{thm:main}. 

Second, in addition to addressing the explicit computation of the matching distance, we provide a detailed study of the lines through the parameter space and a geometric interpretation corresponding to a line being vertical, horizontal, or diagonal. 

About diagonal lines, we show via \Cref{ex:need-diag} that in some cases, lines through pairs of critical points are not sufficient in order to compute the matching distance, but adding lines through critical points with the diagonal direction may turn out to be sufficient. This agrees with the observation in \cite{Cerri2019geometrical} that diagonal lines play a key role in the study of $2$-parameter persistence.  However, \Cref{ex:diag-not-suff,ex:need-omega} show that adding in diagonal lines is also not always sufficient as lines through critical and switch points need to be considered. While critical points are always proper points, our theoretical results show that some switch points are points at infinity of the projective plane. This way, it is possible to limit the number of slopes of lines on which the matching distance can be obtained. 

Regarding vertical and horizontal lines, we give an explicit formula for the computation of the weighted bottleneck distance along horizontal and vertical lines.   Moreover, we conclude that these lines can be excluded from the computation of the overall matching distance between two $2$-parameter persistence modules --- thanks to the inclusion of diagonal lines in the computation, there is always a line with positive but finite slope for which the weighted bottleneck distance is greater than that along a horizontal or vertical line. Viewing the parameter space as a subset of the projective plane helps in understanding the importance of vertical, diagonal, and horizontal lines and makes the treatment of these lines more consistent.

Third, in \cite{Kerber-Lesnick-Oudot2018,Bjerkevik2021}, the authors leverage the duality between points and lines in the plane to compute the matching distance, performing computations in the dual plane. In comparison, our method is directly defined in the parameter space of the persistence modules, helping the understanding of the finite set of lines involved in the maximization for the computation of the matching distance.  In particular, identifying the parameter space with a subset of the projective plane helps to highlight the importance of both proper and infinite switch points (switch points that represent positive slopes).

As a final note: our approach to the computation of the matching distance remains possible only in two dimensions.  This is due to the fact that, in two dimensions, a line is caged by a set of points, in our case the critical and switch points, whereas this is no longer the case in higher dimensions. For this reason, the method we propose does not, for now, generalize to more parameters.

So, In any case, it remains as an open question whether the matching distance for $n$-parameter persistence modules with $n\ge 3$ is achieved as a maximum.


\section*{Acknowledgements}
\noindent A.B. acknowledges the support of ICERM at Brown University, where part of this work was done.

\noindent R.B. was supported for travel to BIRS by an AWM-NSF Travel Grant.

\noindent C.H.\ is supported by NCCR-Synapsy Phase-3 SNSF grant number 51NF40-185897.

\noindent C.L. carried out this work under the auspices of INdAM-GNSAGA and partially within the activities of ARCES (University of Bologna).

\noindent B.I.M.~is supported by Digital Futures (dBrain).

\noindent E.S. was supported  for travel to BIRS by European Research Council (ERC) grant no.\ 788183, `Alpha Shape Theory Extended'.

\noindent We are grateful to the Banff International Research Station for hosting us as a Focused Research Group. 

\noindent We also thank Martin Hafskjold Thoresen for help with computations, and the anonymous referee who pointed out useful connections to the literature.

\printbibliography
%

\end{document}